\newcommand{\xrightharpoonup}[2][]{\ext@arrow 0359\rightharpoonupfill@{#1}{#2}}
\newcommand{\ve}{\varepsilon}  
\newcommand{\ds}{\displaystyle}
\def\R{\mathbb{R}}
\def\P{\mathbb{P}}
\def\E{\mathbb{E}}
\def\N{{\rm I\hspace{-0.50ex}N} }
\def\N{\mathbb{N}}
\newtheorem{lem}{Lemma}[section]
\newtheorem{thm}[lem]{Theorem}
\newtheorem{cor}[lem]{Corollary}
\newtheorem{defi}[lem]{Definition}
\newtheorem{prop}[lem]{Proposition}
\newtheorem{rmq}[lem]{Remark}
\newtheorem{exemple}[lem]{Example}
\title{\bf New approach to greedy vector quantization}
\author{
\textsc{Rancy El Nmeir} \thanks{Sorbonne Universit\'e, Laboratoire de Probabilit\'e, Statistique et Mod\'elisation, Campus Pierre et Marie Curie, case 158, 4, pl. Jussieu, F-75252 Paris Cedex 5, France.} \thanks{Universit\'e Saint-Joseph de Beyrouth, Laboratoire de Math\'ematiques et Applications, Unit\'e de recherche Math\'ematiques et mod\'elisation, B.P. 11-514 Riad El Solh Beyrouth 1107
2050, Liban.}\quad,\quad 
\textsc{Harald Luschgy}~\thanks{Universit\"at Trier, FB IV-Mathematik, D-54286 Trier, Germany.} $\quad$ and $\quad$ 
\textsc{Gilles Pag\`es}\footnotemark[1]
}
\begin{document}
	\date{}
	\maketitle
	\begin{abstract}We extend some rate of convergence results of greedy quantization sequences already investigated in $\cite{LuPa15}$. We show, for a more general class of distributions satisfying a certain control, that the quantization error of these sequences have an $n^{-\frac1d}$ rate of convergence and that the distortion mismatch property is satisfied. We will give some non-asymptotic Pierce type estimates. The recursive character of greedy vector quantization allows some improvements to the algorithm of computation of these sequences and the implementation of a recursive formula to quantization-based numerical integration. Furthermore, we establish further properties of sub-optimality of greedy quantization sequences.
	\end{abstract}
	\paragraph{Keywords :} Greedy quantization sequence; rate optimality; Lloyd's algorithm; distortion mismatch; quantization-based numerical integration; quasi-Monte Carlo methods.
	\bigskip
	\section{Introduction}
	Let $d \geq 1$, $r \in (0, + \infty)$ and $L^r_{\mathbb{R}^d}(\mathbb{P})$ (or simply  $L^r (\mathbb{P})$) the set of $d$-dimensional random variables $X$ defined on the probability space $(\Omega,\mathcal{A},\mathbb{P})$ such that $\E\| X\|^r<+\infty$ where $\|.\|$ denotes any norm on $\R^d$. We denote $P=\P_X$ the probability distribution of $X$.
	Optimal vector quantization is a technique derived from signal processing, initially devised to optimally discretize a continuous (stationary) signal for its transmission. Originally developed in the $1950s$ (see \cite{gegr98}), it was introduced as a cubature formula for numerical integration in the early $1990s$ (see \cite{Pages98}) and for approximation of conditional expectations in the early $2000s$ for financial applications (see \cite{BaPaPr01,BaPa03}). Its goal is to find the best approximation of a continuous probability distribution by a discrete one, or in other words, the best approximation of a multidimensional random vector $X$ by a random variable $Y$ taking at most a finite number $n$ of values. \\
	Let $\Gamma=\{x_1,\ldots,x_n\}$ be a $d$-dimensional grid of size $n$. The idea is to approximate $X$ by $q(X)$, where $q$ is a Borel function defined on $\mathbb{R}^d$ and having values in $\Gamma$. 
	If we consider, for $q$, the nearest neighbor projection $\pi_{\Gamma}:\R^d \rightarrow \Gamma$ defined by 
	$$\pi_{\Gamma} (\xi) =\ds \sum_{i=1}^{n} x_i \mathds{1}_{W_i(\Gamma)}(\xi),$$
	where \begin{equation}
	\label{Voronoicells}
	W_{i}(\Gamma)  \subset\{\xi \in \mathbb{R}^d : \|\xi-x_i\|\leq  \min_{j\neq i} \|\xi-x_j\|\}, \qquad i =1,\ldots,n,
	\end{equation} is the Vorono\"i partition induced by $\Gamma$, then the Vorono\"i quantization of $X$ is defined by
	\begin{equation} 
	\widehat{X}^{\Gamma} =\pi_{\Gamma}(X) := \sum_{i=1}^{n}x_i \mathds{1}_{W_{i}(\Gamma)}(X). 
	\end{equation}
	We will denote, most of the times, $\widehat{X}$ instead of $\widehat{X}^\Gamma$ when there is no need for specifications. The $L^r$-quantization error associated to the grid $\Gamma$ is defined, for every $r\in (0,+\infty)$, by 
	\begin{equation} 
	\label{quanterror}
	e_r(\Gamma,X)  =  \|X-\pi_\Gamma(X)\|_r = \|X-\widehat{X}^{\Gamma}\|_r = \left \|\min_{1\leq i \leq n} |X - x_i|\right \|_r
	\end{equation}
	where $\|.\|_r$ denotes the $L^r(\P)$-norm (or quasi-norm if $0<r<1$). Consequently, the optimal quantization problem comes down to finding the grid $\Gamma$ that minimizes this error. 
	It has been shown (see \cite{GraLu00,Pages15,Pages18}) that this problem admits a solution and that the quantization error converges to $0$ when the size $n$ goes to $+\infty$. The rate of convergence is given by two well known results exposed in the following theorem. 
	\begin{thm} 
		\label{Zadoretpierce}
		\noindent $(a)$ {\rm Zador's Theorem (see \cite{Zador82})} :  
		Let $X \in L_{\mathbb{R}^d}^{r+\eta}(\P)$, $\eta>0$, 
		with distribution $P$ such that 
		$dP(\xi)= \varphi(\xi) d \lambda_d(\xi)+ d\nu(\xi)$. 
		Then, 
		$$\ds\lim_{n\rightarrow +\infty} n^{\frac{1}{d}} e_{r,n}(X) = \tilde{J}_{r,d} \|\varphi\|^{\frac{1}{r}}_{L^{\frac{r}{r+d}}(\lambda_d)}$$
		where $\tilde{J}_{r,d} = \ds \inf_{n \geq 1} n^{\frac{1}{d}} e_{r,n}(U([0,1]^d)) \in (0,+ \infty)$.  \\
		\noindent $(b)$ {\rm Extended Pierce's Lemma (see \cite{LuPa08})}:
		Let $r,\eta >0 $. There exists a constant $\kappa_{d,r,\eta} \in (0,+\infty)$  such that, 
		$$\forall n\geq 1,\quad e_{r,n}(X) \leq \kappa_{d,r,\eta} \sigma_{r+\eta} (X) n^{- \frac{1}{d}}$$
		where, for every $r \in (0, + \infty), \, \sigma_r(X) = \ds \inf_{a \in \mathbb{R}^d} \|X-a\|_r$ is the $L^r$-standard deviation of $X$.
\end{thm}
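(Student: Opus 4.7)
The plan for part (a) (Zador) is a reduction in stages. First, I would dispose of the singular part $d\nu$: since $\nu$ is supported on a $\lambda_d$-negligible set, it can be covered by finitely many cubes of arbitrarily small Lebesgue measure, and a grid obtained from a dense countable sequence achieves an $o(n^{-1/d})$ rate there, so $\nu$ does not contribute to the limit. Next, I would approximate $\varphi$ by compactly supported, bounded, piecewise-constant densities on a dyadic partition, and verify two continuities: that $\varphi \mapsto \|\varphi\|_{L^{d/(r+d)}(\lambda_d)}$ is continuous under such approximations, and that the quantization error depends appropriately on $P$ (in a suitable Wasserstein-type sense). Finally, for the uniform distribution on $[0,1]^d$, I would establish the existence of $\tilde J_{r,d}\in(0,+\infty)$ via a Fekete-type subadditivity argument: subdividing $[0,1]^d$ into $k^d$ congruent subcubes and invoking self-similarity yields a submultiplicative relation on $n^{r/d}\,e_{r,n}(U([0,1]^d))^r$, hence convergence of its root to the infimum.

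For the last step, given a piecewise-constant $\varphi=\sum_j \alpha_j \mathds{1}_{C_j}$, I would allocate $n_j\propto \alpha_j^{d/(r+d)}\lambda_d(C_j)$ points to each cube $C_j$, so that the upper bound follows by summing the uniform-case estimate over cubes. The matching lower bound is obtained by splitting any optimal $n$-grid across the cubes and applying Hölder's inequality (with exponents tied to $d/(r+d)$) to $\sum_j n_j^{-r/d}\alpha_j\lambda_d(C_j)$; its minimum over $\sum_j n_j\le n$ is attained precisely at the above allocation, producing the $L^{d/(r+d)}$-quasinorm in the limit as the partition refines.

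The plan for part (b) (extended Pierce) is an explicit grid construction balancing bulk and tails. After recentering at $a^\star\in\mathbb{R}^d$ minimising $\|X-a\|_{r+\eta}$ (so that $\sigma_{r+\eta}(X)$ appears in place of $\|X\|_{r+\eta}$), I would cover $\mathbb{R}^d$ by an inner cube centred at $a^\star$ of half-side $A$, tiled into $\sim n$ small cubes of side $\sim A n^{-1/d}$, plus dyadic shells $\{A\cdot 2^k \le \|\xi - a^\star\| < A\cdot 2^{k+1}\}$, $k\ge 0$, each tiled with geometrically few points. Placing one point at the centre of each cube, the inner contribution to $e_{r,n}(X)^r$ is $O((A n^{-1/d})^r)$, while the $k$-th shell contributes $O((A\cdot 2^k)^r)\cdot \mathbb{P}(\|X-a^\star\|\ge A\cdot 2^k)$; by Markov's inequality this last probability is $\le (A\cdot 2^k)^{-(r+\eta)}\sigma_{r+\eta}(X)^{r+\eta}$, so the extra $\eta$ makes the shell series a convergent geometric one of order $A^{-\eta}\sigma_{r+\eta}(X)^{r+\eta}$. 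Choosing $A$ as an appropriate power of $n$ and $\sigma_{r+\eta}(X)$ to equalise the two contributions yields the announced bound.

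The main obstacle in (a) is the double-sided piecewise-constant approximation: since $d/(r+d)<1$, the $L^{d/(r+d)}$-quasinorm is not a Banach norm, so standard continuity arguments have to be replaced by direct truncation-and-refinement estimates that remain compatible with both the upper and the lower bound, in particular on unbounded supports. The main subtlety in (b) is enforcing scale- and translation-invariance so that $\sigma_{r+\eta}(X)$, not $\|X\|_{r+\eta}$, appears in the final estimate; this dictates the recentering at $a^\star$ and forces the careful optimisation of the scale $A$ to absorb $\sigma_{r+\eta}(X)$ into the multiplicative constant $\kappa_{d,r,\eta}$.
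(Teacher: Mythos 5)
First, note that the paper itself offers no proof of this theorem: it is quoted as background, with part $(a)$ referred to \cite{Zador82} (in the rigorous form of Graf--Luschgy \cite{GraLu00}) and part $(b)$ to \cite{LuPa08}. Your outline of $(a)$ is essentially the classical Graf--Luschgy route (self-similarity/subadditivity on $[0,1]^d$, piecewise-constant approximation of $\varphi$, optimal point allocation $n_j\propto \alpha_j^{d/(d+r)}\lambda_d(C_j)$ via a reverse-H\"older argument, negligibility of the singular part); as a plan it is the right one, with the usual caveat that the two steps you only gesture at (the lower bound for a general density, which needs a ``firewall''-type argument to prevent points outside a cube from helping inside it, and the $o(n^{-1/d})$ treatment of the singular part, which needs a Vitali-type covering plus uniform integrability) are precisely where the real work lies.

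Part $(b)$, however, contains a genuine quantitative gap: the construction you describe does not yield the rate $n^{-1/d}$. If the inner cube of half-side $A$ receives $\sim n$ points and each dyadic shell is covered only at the crude scale of its own radius (so that shell $k$ contributes $O\big((A2^k)^r\big)\,\mathbb{P}(\|X-a^\star\|\geq A2^k)$, i.e.\ $O(1)$ points per shell), then the two contributions are $C A^r n^{-r/d}$ and $C_\eta\,\sigma_{r+\eta}(X)^{r+\eta}A^{-\eta}$, and equalising them forces $A\asymp \sigma_{r+\eta}(X)\, n^{\frac{r}{d(r+\eta)}}$, giving $e_{r,n}(X)\lesssim \sigma_{r+\eta}(X)\, n^{-\frac{\eta}{d(r+\eta)}}$, which is strictly worse than $n^{-1/d}$. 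The tension is structural: with $O(1)$ points per shell the tail term does not decay in $n$ unless $A\to\infty$, and letting $A$ grow destroys the bulk term. The standard fixes are: (i) keep $A\asymp\sigma_{r+\eta}(X)$ \emph{fixed} and allocate to shell $k$ a number of points $N_k\asymp n\,2^{-k\rho d}$ with $0<\rho<\eta/r$, so the within-shell mesh is $\asymp \sigma_{r+\eta}(X)\,2^{k(1+\rho)}n^{-1/d}$; then Markov's inequality with the $(r+\eta)$-moment makes the weighted error series $\sum_k 2^{k(r\rho-\eta)}$ converge, the total number of points stays $O(n)$, and scale/translation equivariance produces the factor $\sigma_{r+\eta}(X)$ exactly as you intended; or (ii) the classical random-quantization proof (as in \cite{LuPa08}): draw $n$ i.i.d.\ points from a suitably heavy-tailed companion distribution and bound the expected $L^r$-error, which yields the non-asymptotic bound directly. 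As written, your balancing step cannot ``yield the announced bound,'' so this step must be replaced by one of these arguments.
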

However, the numerical implementation of multidimensional optimal quantizers requires the computation of grids of size $N \times d$ which becomes too expensive
when $N$ or $d$ increase. Hence, there is a need to provide a sub-optimal solution
to the quantization problem which is easier to handle and whose convergence rate remains similar (or comparable) to that induced by optimal quantizers.
A so-called greedy version of optimal vector quantization has been developed in \cite{LuPa15}. It consists this time in building a {\it sequence} of points $(a_n)_{n \geq 1}$ in $\R^d$ which is recursively optimal step by step, in the sense that it minimizes the $L^r$-quantization error at each iteration. This means that, having the first $n$ points $a^{(n)}=\{a_1,\ldots,a_n\}$ for $n\geq 1$, we add, at the $(n+1)$-th step, the point $a_{n+1}$ solution to
\begin{equation}
\label{greedydef}
\qquad a_{n+1} \in \mbox{argmin}_{\xi \in \R^d} \, e_r(a^{(n)} \cup \{\xi\}, X),
\end{equation}  
noting that $a^{(0)} = \varnothing$, so that {\em $a_1$ is simply an/the $L^r $-median of the distribution $P$ of $X$}. 
The sequence $(a_n)_{n \geq 1}$ is called an $L^r$-optimal greedy quantization sequence for $X$ or its distribution $P$. The idea to design such an optimal sequence, which will hopefully produce  quantizers with a rate-optimal behavior as $n$ goes to infnity, is very natural and may be compared to sequences with low discrepancy in Quasi-Monte Carlo methods when working on the unit cube $[0,1]^d$. 
In fact, such sequences have already been investigated in an $L^1$-setting for compactly supported distributions $P$ as a model of short term experiment planning versus long term experiment planning represented by regular quantization at a given level $n$ (see \cite{BBSS09}) and, then, in \cite{LuPa15} where the authors investigated more deeply this greedy version of vector quantization for $L^r$-random vectors taking values in $\R^d$. They showed that the problem~$(\ref{greedydef})$ admits at least one solution $(a_n)_{n \geq 1}$ when $X$ is an $\R^d$-valued random vector (the existence of such sequences can be proved in Banach spaces but, in this paper, we will only focus on $\R^d$). This sequence may not be unique since greedy quantization depends on the symmetry of the distribution (consider for example the $\mathcal{N}(0,1)$ distribution). However, note that, if the norm $\|.\|$ is strictly convex and $r>1$, then the $L^r$-median is unique. They also showed that the $L^r$-quantization error converges to $0$ when $n$ goes to infinity
and, if $\mbox{supp}(P)$ contains at least $n$ elements, then the sequence $a^{(n)}$ lies in the convex hull of supp($P$), $e_r (a^{(k)} , X)$ is decreasing w.r.t. $k \in \{1,\ldots,n\}$ and $P\left( \{\xi \in \R^d: \|\xi -a_n\| < \min_{1 \leq i \leq n } \|\xi-a_i\|\} \right) > 0$. The proof of these results (see Propositions $2.1$ and $2.2$ in \cite{LuPa15}) are based on micro-macro inequalities given in \cite{mismatch08}. Moreover, the authors showed in \cite{LuPa15} that these sequences have an optimal rate of convergence to zero, compared to optimal quantizers, and that they satisfy the distortion mismatch problem, i.e. the property that the optimal rate of $L^r$-quantizers holds for $L^s$-quantizers for $s>r$. The proofs were based on the integrability of the $b$-maximal functions associated to an $L^r$-optimal greedy quantization sequence $(a_n)_{n \geq 1}$ given by 
\begin{equation}
\label{psib}
\forall \xi \in \R^d, \quad \Psi_b(\xi)=\sup_{n \in \N}\frac{\lambda_d\left(B(\xi,b \, \mbox{dist}(\xi,a^{(n)}))\right)}{P\left(B(\xi,b \, \mbox{dist}(\xi,a^{(n)}))\right)}.
\end{equation}
In this paper, we will extend those rate of convergence and distortion mismatch results to a much larger class of functions. Instead of maximal functions, we will rely on a new micro-macro inequality involving an auxiliary probability distribution $\nu$ on $\R^d$. When this distribution $\nu$ satisfies an appropriate control on balls, with respect to an $L^r$-median $a_1$ of $P$, defined later in section~$\ref{rateoptimality}$, we will show that the rate of convergence of the $L^r$-quantization error of greedy sequences is $\mathcal{O}(n^{-\frac1d})$, just like the optimal quantizers. Furthermore, considering appropriate auxiliary distributions $\nu$ satisfying this control allows us to obtain Pierce type, and hybrid Zador-Pierce type, $L^r$-rate optimality results of the error quantization, instead of only Zador type results as given in \cite{LuPa15}.\\ 

A very important field of applications is to use these greedy sequences instead of $n$-optimal quantizers in quantization-based numerical integration schemes. In fact, the size of the grids used in these procedures is large in a way that the RAM storing of the quantization tree may exceed the storage capacity of the computing device. So, using greedy quantization sequences will dramatically reduce this drawback, especially since we will show that they behave similarly to optimal quantizers in terms of convergence rate. The computation of greedy quantizers is performed by algorithms, detailed in \cite{LuPa215}, allowing also the computation of the weights $(p_i^n)_{1 \leq i \leq n}$ of the Vorono\"i cells of the sequence $a^{(n)}$. Theses quantities are  mandatory for the greedy quantization-based numerical integration to approximate an integral $I$ of a function $f$ on $\R^d$ by the cubature formula 
$$I(f) \approx \sum_{i=1}^n p_i^n f\big(a_i^{(n)}\big).$$  
Compared to other methods of numerical approximation, such as quasi-Monte Carlo methods (QMC), the quantization-based methods present an advantage in terms of convergence rate, since QMC, for example, is known to induce a convergence rate of $\mathcal{O}\Big(\frac{\log n}{n^{\frac1d}} \Big)$ when integrating Lipschitz functions (see \cite{Proinov88}) while quantization-based numerical integration produces an $\mathcal{O}\big(n^{-\frac1d}\big)$ rate (see \cite{Pages18}). However, it seems to have a drawback which is the computation of the non-uniform weights $(p_i^n)_{1 \leq i \leq n}$, unlike the uniform weights in QMC (equal to $\frac1n$). In this paper, we expose how the recursive character of greedy quantization provides several improvements to the algorithm, making it more advantageous. Moreover, this character induces the implementation of a recursive formula for numerical integration, that can replace the usual cubature formula, reducing the time and cost of the computations. This recursive formula will be introduced first in the one-dimensional case, and then extended to the multi-dimensional case for product greedy quantization sequences, computed from one-dimensional sequences, used to reduce the cost of implementations while always preserving the recursive character.\\

The paper is organized as follows. We first show that greedy quantization sequences can be rate optimal just like the optimal quantizers in section~\ref{rateoptimality} where we extend the results already presented in \cite{LuPa15} and we give Pierce type results. Likewise, the distortion mismatch problem will be solved and extended in section~\ref{distortionmismatch}. In section~\ref{algorithmics}, we present the improvements we can apply to the algorithm of designing the greedy sequences, as well as the new approach for greedy quantization-based numerical integration. Numerical examples will illustrate and confirm the advantages brought by this new approach in section~\ref{numericalexample}. Finally, section~\ref{numericalproperties} is devoted to some numerical conclusions about further properties of greedy quantization sequences such as the sub-optimality, the convergence of empirical measures, the stationarity (or quasi-stationarity) and the discrepancy, to see to what extent greedy sequences can be close to optimality.

\section{Rate optimality: Universal non-asymptotic bounds}
\label{rateoptimality}
In \cite{LuPa15}, the authors presented the rate optimality of $L^r$-greedy quantizers in the sense of Zador's theorem based on the integrability of the $b$-maximal function $\Psi_b(\xi)$ defined by~$(\ref{psib})$. Here, we present Pierce type non-asymptotic estimates relying on micro-macro inequalities applied to a certain class of auxiliary probability distributions $\nu$. Different specifications of $\nu$ lead to various versions of Pierce's Lemma. \\
In all this section, we denote $V_d=\lambda_d\big(B(0,1)\big)$ w.r.t. the norm $\|\cdot\|$.
We recall, first, a micro-macro inequality that will be be used to prove the first result.
\begin{prop}
	\label{inc}
	Assume $\int\|x\|^rdP(x)<+\infty$. Then, for every probability distribution $\nu$ on $(\R^d,\mathcal{B}(\R^d))$, every $c \in (0, \tfrac 12)$ and every $n \geq 1$
	\begin{align*}
	e_r(a^{(n)},P)^r - e_r(a^{(n+1)},P)^r 
	 \geq \frac{(1-c)^r-c^r}{(c+1)^r} \int \nu \left(B\left(x, \frac{c}{c+1}d\left(x,a^{(n)}\right)  \right) \right) d\left(x,a^{(n)}\right)^r dP(x). 
	\end{align*}
\end{prop}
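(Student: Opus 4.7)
The key observation is that this proposition is a consequence of the micro-macro inequality (the one recalled earlier in the excerpt)
$$e_r(\Gamma,P)^r - e_r(\Gamma\cup\{y\},P)^r \;\geq\; \bigl((1-c)^r-c^r\bigr)\,P\bigl(B(y,c\,d(y,\Gamma))\bigr)\,d(y,\Gamma)^r,$$
combined with the optimality defining $a_{n+1}$. My plan is therefore as follows.

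First, I apply the micro-macro inequality with $\Gamma = a^{(n)}$ and an \emph{arbitrary} auxiliary point $y\in\R^d$. Because $a_{n+1}$ is, by definition of a greedy sequence, a minimizer of $y\mapsto e_r(a^{(n)}\cup\{y\},P)$, one has $e_r(a^{(n+1)},P)\leq e_r(a^{(n)}\cup\{y\},P)$ for every $y$, so
$$e_r(a^{(n)},P)^r - e_r(a^{(n+1)},P)^r \;\geq\; \bigl((1-c)^r-c^r\bigr)\,P\bigl(B(y,c\,d(y,a^{(n)}))\bigr)\,d(y,a^{(n)})^r$$
uniformly in $y$. Since the left-hand side does not depend on $y$, I then integrate the right-hand side against the probability measure $\nu$ (this only loses nothing, because $\nu(\R^d)=1$).

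Second, I convert the resulting double integral from being centred at $y$ (with $P$-mass in a ball around $y$) into one centred at $x$ (with $\nu$-mass in a ball around $x$). By Fubini-Tonelli, the $\nu$-integral becomes
$$\int_{\R^d}\!\!dP(x)\int_{\R^d}\mathds{1}_{\{\|x-y\|\leq c\,d(y,a^{(n)})\}}\,d(y,a^{(n)})^r\,d\nu(y).$$
The crucial geometric step is the ball-inclusion
$$\Bigl\{y:\ \|x-y\|\leq \tfrac{c}{c+1}d(x,a^{(n)})\Bigr\}\ \subseteq\ \bigl\{y:\ \|x-y\|\leq c\,d(y,a^{(n)})\bigr\},$$
which I get from the reverse triangle inequality: if $\|x-y\|\leq \tfrac{c}{c+1}d(x,a^{(n)})$, then $d(y,a^{(n)})\geq d(x,a^{(n)})-\|x-y\|\geq \tfrac{1}{c+1}d(x,a^{(n)})$, and hence $c\,d(y,a^{(n)})\geq \tfrac{c}{c+1}d(x,a^{(n)})\geq \|x-y\|$. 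On this smaller set I also keep the sharper lower bound $d(y,a^{(n)})^r\geq \bigl(d(x,a^{(n)})/(c+1)\bigr)^r$ (valid for every $r>0$ since $t\mapsto t^r$ is non-decreasing). Substituting both estimates yields
$$\int d(y,a^{(n)})^r\,\mathds{1}_{\{\|x-y\|\leq c\,d(y,a^{(n)})\}}\,d\nu(y)\ \geq\ \frac{1}{(c+1)^r}\,d(x,a^{(n)})^r\,\nu\!\left(B\!\left(x,\tfrac{c}{c+1}d(x,a^{(n)})\right)\right),$$
and assembling the two inequalities gives exactly the claimed bound.

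The only mildly delicate point is to check that the integrand is measurable and that Fubini applies without assumption on $\nu$; the finiteness of $\int \|x\|^r dP < \infty$ together with boundedness of $a^{(n)}$ makes $d(\cdot,a^{(n)})^r$ integrable w.r.t.\ $P$, while the indicator and the continuous function $d(\cdot,a^{(n)})^r$ are jointly Borel on $\R^d\times\R^d$, so Tonelli applies to the non-negative integrand. I expect no other real obstacle; the heart of the argument is really just the correct choice of the shrinking factor $c/(c+1)$, which is exactly what is needed to trade a ball around $y$ for a ball around $x$ while preserving the distance lower bound up to the factor $(c+1)^{-r}$.
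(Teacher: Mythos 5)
Your argument is correct and is essentially the paper's own proof: the same use of the greedy optimality of $a_{n+1}$ to make the micro-macro bound uniform in $y$, integration against $\nu$, Fubini--Tonelli, and the same inclusion of $\{y:\|x-y\|\le \tfrac{c}{c+1}d(x,a^{(n)})\}$ into $\{y:\|x-y\|\le c\,d(y,a^{(n)})\}$ together with $d(y,a^{(n)})\ge \tfrac{1}{c+1}d(x,a^{(n)})$, yielding the factor $(c+1)^{-r}$. The only difference is that you cite the micro-macro inequality as known (it is, from the distortion-mismatch literature), whereas the paper re-derives it as Step~1 of the same proof.
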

\begin{proof}
	\noindent \textsc{\bf Step 1:} Micro-macro inequality\\
	Let $\Gamma \subset \R^d$ be a finite quantizer of a random variable $X$ with distribution $P$ and $\Gamma_1=\Gamma \cup \{y\}$, $y \in \R^d$. For every $c \in (0, \tfrac 12)$, we have $B(y, cd(y,\Gamma))\subset W_{y}(\Gamma_1)$, where $ W_{y}(\Gamma_1)$ is the Vorono\"i cell associated
	centroid $y$ form a Voronoi partition induced by $\Gamma_1$, as defined by~$(\ref{Voronoicells})$. Hence,  for every $x \in B(y, cd(y,\Gamma))$,
	$d(x,\Gamma) \geq d(y,\Gamma)-\|x-y\| \geq (1-c)d(y,\Gamma).$ 
	Consequently,
	\begin{align*}
	e_r(\Gamma,P)^r - e_r(\Gamma \cup \{y\},P)^r & = \int_{\R^d} \left(d(x,\Gamma)^r -d(x,\Gamma_1)^r\right)dP(x)\\
	& \geq \int_{W_y(\Gamma_1)} \left(d(x,\Gamma)^r-\|x-y\|^r \right)dP(x)\\
	& \geq \int_{B(y, cd(y,\Gamma))} ((1-c)^r-c^r) d(y,\Gamma)^r dP(x).
	\end{align*}
	Finally, we obtain the micro-macro inequality
	\begin{equation}
	\label{micromacro}
	e_r(\Gamma,P)^r - e_r(\Gamma \cup \{y\},P)^r 
	\geq ((1-c)^r-c^r) P \left(B\left(y, cd\left(y,\Gamma\right)  \right) \right) d\left(y,\Gamma\right)^r. 
	\end{equation}
	\textsc{\bf Step 2:} 
	Based on the micro-macro inequality~$(\ref{micromacro})$, we have for every $ c\in \left(0, \frac{1}{2} \right)$ and every $ y \in \R^d,$
	$$e_r(a^{(n)},P)^r- e_r(a^{(n)}\cup \{y\},P)^r \geq \left((1-c)^r-c^r\right)P\left(B\left(y,cd(y,a^{(n)})\right)\right) d(y,a^{(n)})^r .$$
	Since $e_r(a^{(n+1)},P)\leq e_r(a^{(n)} \cup \{y\},P)$ for every $y \in \R^d$,
	$$e_r(a^{(n)},P)^r- e_r(a^{(n+1)},P)^r \geq \left((1-c)^r-c^r\right)P\left(B\left(y,cd(y,a^{(n)})\right)\right) d(y,a^{(n)})^r .$$
	We integrate this inequality with respect to $\nu$ to obtain 
	$$e_r(a^{(n)},P)^r- e_r(a^{(n+1)},P)^r  \geq ((1-c)^r-c^r)\int P\left(B\left(y,cd(y,a^{(n)})\right)\right) d(y,a^{(n)})^rd\nu(y).$$
	Now, we consider the closed sets 
	$$F_1=\left\{ (x,y) \in (\R^d)^2: \|x-y\|\leq cd(y,a^{(n)}) \right\} \quad \mbox{and} \quad F_2=\left\{ (x,y) \in (\R^d)^2: \|x-y\|\leq \frac{c}{c+1}d(x,a^{(n)}) \right\}.$$
	We notice that 
	$$F_2 \subset F_1 \cap \left\{(x,y) \in (\R^d)^2: d(y,a^{(n)}) \geq \frac{1}{c+1} d(x,a^{(n)}) \right\},$$
	In fact, for $(x,y) \in F_2$, 
	$$d(y,a^{(n)}) \geq d(x,a^{(n)})-\|x-y\| \geq d(x,a^{(n)})-\frac{c}{c+1}d(x,a^{(n)}) \geq \frac{1}{c+1} d(x,a^{(n)})$$
	and 
	$$\|x-y\| \leq \frac{c}{c+1} d(x,a^{(n)}) \leq c d(y,a^{(n)}).$$
	Then, 
	\begin{align*}
	\int P(B(y,cd(y,a^{(n)}))) d(y,a^{(n)})^rd\nu(y) &=  \int \int \mathds{1}_{F_1}(x,y) d(y,a^{(n)})^r d\nu(y) dP(x) \\
	& \geq \frac{1}{(c+1)^r} \int \int \mathds{1}_{F_2}(x,y) d(x,a^{(n)})^r d\nu(y) dP(x) \\
	& =\frac{1}{(c+1)^r} \int \nu \left(B\left(x, \frac{c}{c+1}d\left(x,a^{(n)}\right)  \right) \right) d(x,a^{(n)})^r dP(x).
	\end{align*}
	\hfill $\square$
\end{proof}

In order to prove the rate optimality of the greedy quantization sequences and obtain a non-asymptotic Pierce type result, we will consider auxiliary probability distributions $\nu$ satisfying the following control on balls with respect to an $L^r$-median $a_1$ of $P$: for every $\varepsilon \in (0,\ve_0)$, for some $\ve_0\!\in (0, 1]$, there exists a Borel function $g_{\varepsilon}: \R^d \rightarrow [0,+\infty)$ such that, for every $x \in \mbox{supp}(P)$ and every $t \in [0,\varepsilon \|x-a_1\|]$, 
\begin{equation}
\label{criterenu}
  \nu(B(x,t)) \geq g_{\varepsilon}(x) V_d t^d.
\end{equation}
Of course, this condition is of interest only if the set $\{g_\ve >0\}$ is sufficiently large. Note that $a_1 \in a^{(n)}$ for every $n \geq 1$ by construction of the greedy quantization sequence. We begin by a technical lemma which will be used in the proof of the next proposition.
\begin{lem}
	\label{lemmab31}
	Let $C, \rho \in (0,+\infty)$ be some real constants and $(x_n)_{n \geq 1}$ be a non-negative sequence satisfying, for every $n \geq 1$,
	\begin{equation}
	\label{lemb31}
		x_{n+1} \leq x_n -Cx_n^{1+\rho}.
	\end{equation}
	 Then for every $n \geq 1$,
	$$(n-1)^{\frac{1}{\rho}}x_n \leq \left(\frac{1}{C\rho} \right)^{\frac{1}{\rho}}.$$
\end{lem}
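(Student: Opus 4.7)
The plan is to reduce the recursive inequality \eqref{lemb31} to an arithmetic lower bound on the reciprocal sequence $y_n := x_n^{-\rho}$, which is the standard device for handling such sub-super-linear recursions.

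First I would dispose of the trivial cases. For $n=1$ the left-hand side is $0$, so the inequality is automatic; and if $x_{n_0}=0$ for some $n_0$, then \eqref{lemb31} forces $x_n=0$ for all $n\geq n_0$ (since the sequence is non-negative and monotone non-increasing), and again there is nothing to prove. Hence I may assume $x_n>0$ for every $n\geq 1$. Note also that, since $x_{n+1}\geq 0$, the recurrence \eqref{lemb31} implies $C x_n^{\rho}\leq 1$ for every $n$, i.e. $C/y_n\leq 1$.

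Next, rewrite \eqref{lemb31} as $x_{n+1}\leq x_n\bigl(1-C/y_n\bigr)$, raise to the power $-\rho$ (an order-reversing operation since $-\rho<0$), and obtain
\begin{equation*}
y_{n+1} \;=\; x_{n+1}^{-\rho} \;\geq\; y_n\bigl(1-C/y_n\bigr)^{-\rho}.
\end{equation*}
The key step is then the elementary Bernoulli-type inequality
\begin{equation*}
(1-u)^{-\rho}\;\geq\; 1+\rho u,\qquad u\in[0,1),\ \rho>0,
\end{equation*}
which follows by noting that $f(u)=(1-u)^{-\rho}-1-\rho u$ vanishes at $0$ and has non-negative derivative $\rho\bigl((1-u)^{-\rho-1}-1\bigr)$. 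Applied with $u=C/y_n\in[0,1]$, this yields
\begin{equation*}
y_{n+1}\;\geq\; y_n\Bigl(1+\frac{C\rho}{y_n}\Bigr)\;=\;y_n+C\rho.
\end{equation*}

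A straightforward induction then gives $y_n\geq y_1+C\rho(n-1)\geq C\rho(n-1)$ for every $n\geq 1$, and inverting yields $x_n^{\rho}\leq \bigl(C\rho(n-1)\bigr)^{-1}$, i.e.\ $(n-1)^{1/\rho}x_n\leq (C\rho)^{-1/\rho}$, as claimed. There is no real obstacle here: the only non-routine point is recognizing that conjugating by the map $x\mapsto x^{-\rho}$ linearizes the recursion up to the Bernoulli-type inequality above, after which everything reduces to an arithmetic progression lower bound.
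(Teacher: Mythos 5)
Your proof is correct and takes essentially the same route as the paper: pass to $y_n=x_n^{-\rho}$, show $y_{n+1}\geq y_n+C\rho$ via a Bernoulli-type inequality, and conclude by induction. The only cosmetic difference is that you use the single inequality $(1-u)^{-\rho}\geq 1+\rho u$ valid for all $\rho>0$, whereas the paper first writes $\frac{1}{(1-Cx_n^{\rho})^{\rho}}\geq (1+Cx_n^{\rho})^{\rho}$ and then splits into the cases $\rho\geq 1$ and $0<\rho<1$ with the classical Bernoulli inequalities; your variant avoids that case distinction.
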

\begin{proof}
	We rely on the following Bernoulli inequalities, for every $x \geq -1$,
	$$
	(1+x)^{\rho} \geq 1+\rho x, \quad \mbox{ if } \rho \geq 1, \qquad \mbox{ and } \qquad (1+x)^{\rho} \leq 1+\rho x, \quad \mbox{ if } 0 < \rho < 1.
	$$
	These inequalities can be obtained by studying the function $f$ defined for every $ x \in (-1,+\infty)$ by $f(x)= (1+x)^{\rho} - (1+\rho x) $. Assuming that $(x_n)_{n \geq 1}$ is non-increasing and that $x_n >0$ for every $n \geq 1$, it follows from~$(\ref{lemb31})$ that
	$$
	\frac{1}{x_{n+1}^{\rho}} \geq \frac{1}{x_n^{\rho}(1-C\, x_n^{\rho})^{\rho}} \geq \frac{1}{x_n^{\rho}}(1+C\, x_n^{\rho})^{\rho}.
	$$
If $\rho \geq 1$, the Bernoulli inequalities imply 
$\frac{1}{x_{n+1}^{\rho}} \geq \frac{1}{x_n^{\rho}}(1+C\, \rho \, x_n^{\rho})=\frac{1}{x_n^{\rho}} +C \rho.$ By induction, one obtains
$$
\frac{1}{x_{n}^{\rho}} \geq \frac{1}{x_{1}^{\rho}} +(n-1)C \rho \geq (n-1)C \rho
$$
to deduce the result easily. If $0 < \rho< 1$, then $-C \rho x_n^{\rho} \geq -1$ for every $n \geq 1$, and the result is deduced by using the Bernoulli inequality and then reasoning by induction. 
\hfill $\square$	
\end{proof}
\begin{prop}
	\label{mainthm}
	Let $P$ be such that $\int_{\R^d} \|x\|^rdP(x)<+\infty$. For any distribution $\nu$ and Borel function $g_{\varepsilon} : \R^d\rightarrow \R_+$, $\varepsilon \in (0,\frac13)$, satisfying~$(\ref{criterenu})$,
	\begin{equation}
	\label{gepsilongeneral}
	\forall n \geq 2, \quad e_r(a^{(n)},P) \leq \varphi_r(\varepsilon)^{-\frac1d} V_d^{-\frac1d} \left(\frac rd \right)^{\frac1d}\left(\int g_{\varepsilon}^{-\frac rd} dP  \right)^{\frac1r} (n-1)^{-\frac1d}
	\end{equation}
	where $\ds \varphi_r(u)=\left(\frac{1}{3^r}-u^r\right)u^d$.
\end{prop}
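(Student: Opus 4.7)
The plan is to combine the increment inequality from Proposition~\ref{inc} with the lower ball control~$(\ref{criterenu})$ to obtain a one-step recursive inequality for $x_n := e_r(a^{(n)},P)^r$ of the type considered in Lemma~\ref{lemmab31}, and then read off the claimed bound. The key couplings to engineer are: (i) choosing the parameter $c$ in Proposition~\ref{inc} so that the ball radius $\tfrac{c}{c+1}d(x,a^{(n)})$ lands inside the range $[0,\varepsilon\|x-a_1\|]$ where $(\ref{criterenu})$ applies, and (ii) turning the resulting integral $\int g_\varepsilon(x)\,d(x,a^{(n)})^{r+d}\,dP$ into a quantity controlled from below by $e_r(a^{(n)},P)^{r+d}$ via Hölder's inequality.

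\textbf{Setting up the recursion.} First I would specialize Proposition~\ref{inc} at the choice $c=\varepsilon/(1-\varepsilon)$, which is legitimate since $\varepsilon<1/3$ forces $c<1/2$, and is precisely tuned so that $\tfrac{c}{c+1}=\varepsilon$ and $(c+1)^r=(1-\varepsilon)^{-r}$. Using $a_1\in a^{(n)}$ (a structural feature of greedy sequences), one has $d(x,a^{(n)})\leq \|x-a_1\|$, so $\varepsilon\,d(x,a^{(n)})\leq \varepsilon\|x-a_1\|$ lies in the admissible range of $(\ref{criterenu})$; substituting the lower bound $\nu(B(x,\varepsilon d(x,a^{(n)})))\geq g_\varepsilon(x)V_d\,\varepsilon^d\,d(x,a^{(n)})^d$ into the micro-macro increment produces
\begin{equation*}
e_r(a^{(n)},P)^r-e_r(a^{(n+1)},P)^r\geq \bigl((1-2\varepsilon)^r-\varepsilon^r\bigr)V_d\,\varepsilon^d\int g_\varepsilon(x)\,d(x,a^{(n)})^{r+d}\,dP(x).
\end{equation*}
Since $\varepsilon<1/3$ gives $(1-2\varepsilon)^r>3^{-r}$, the prefactor dominates $\varphi_r(\varepsilon)V_d$. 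I would then apply Hölder's inequality with conjugate exponents $p=(r+d)/r$ and $q=(r+d)/d$ to the factorization $d(x,a^{(n)})^r=\bigl(g_\varepsilon^{r/(r+d)}d(x,a^{(n)})^r\bigr)\cdot g_\varepsilon^{-r/(r+d)}$, which yields
\begin{equation*}
\int g_\varepsilon(x)\,d(x,a^{(n)})^{r+d}\,dP(x)\geq \frac{e_r(a^{(n)},P)^{r+d}}{\bigl(\int g_\varepsilon^{-r/d}dP\bigr)^{d/r}}.
\end{equation*}

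\textbf{Closing the argument.} Combining the two displays above gives $x_{n+1}\leq x_n - C x_n^{1+\rho}$ with $\rho=d/r$ and $C=\varphi_r(\varepsilon)V_d\bigl(\int g_\varepsilon^{-r/d}dP\bigr)^{-d/r}$, so Lemma~\ref{lemmab31} delivers $x_n\leq (C\rho)^{-1/\rho}(n-1)^{-1/\rho}$, i.e.\ the stated bound $e_r(a^{(n)},P)\leq \varphi_r(\varepsilon)^{-1/d}V_d^{-1/d}(r/d)^{1/d}\bigl(\int g_\varepsilon^{-r/d}dP\bigr)^{1/r}(n-1)^{-1/d}$. The only real subtlety is the book-keeping on constants: one must verify simultaneously that the choice $c=\varepsilon/(1-\varepsilon)$ satisfies $c<1/2$, that the same $c$ makes $\tfrac{c}{c+1}\leq\varepsilon$ so the ball control applies, and that the arithmetic identity $\tfrac{(1-c)^r-c^r}{(c+1)^r}=(1-2\varepsilon)^r-\varepsilon^r$ together with $(1-2\varepsilon)^r>3^{-r}$ collapses the prefactor exactly to $\varphi_r(\varepsilon)$. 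The cases where $\int g_\varepsilon^{-r/d}dP=+\infty$ or $e_r(a^{(n)},P)=0$ for some finite $n$ (finitely supported $P$) make the bound trivial and need no separate treatment.
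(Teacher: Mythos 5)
Your proposal is correct and follows essentially the same route as the paper: Proposition~\ref{inc} combined with the ball control~$(\ref{criterenu})$ via the choice $c=\varepsilon/(1-\varepsilon)$ (so that $\tfrac{c}{c+1}=\varepsilon$), a (reverse) H\"older step turning $\int g_\varepsilon\,d(x,a^{(n)})^{r+d}dP$ into a lower bound by $e_r(a^{(n)},P)^{r+d}$, and Lemma~\ref{lemmab31} with $\rho=d/r$. The only cosmetic differences are that you fix $c$ at the outset and compute the prefactor exactly as $((1-2\varepsilon)^r-\varepsilon^r)\varepsilon^d\ge\varphi_r(\varepsilon)$, whereas the paper keeps $c$ generic and specifies it at the end, and you derive the reverse H\"older bound through a forward H\"older factorization; both are valid.
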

\begin{proof}
	We may assume that $\int g_\ve^{-\frac rd}dP<+\infty$. Assume $c \in (0 , \frac{\varepsilon}{1-\varepsilon}] \cap (0,\frac12)$ so that $\frac{c}{c+1} \leq \varepsilon$. Moreover $d(x,a^{(n)}) \leq d(x,a_1)$ since $a_1 \in a^{(n)}$. Consequently, for any such $c$, $\ds \frac{c}{c+1} \, d(x,a^{(n)}) \leq \varepsilon \|x-a_1\|$ so that, by~$(\ref{criterenu})$, there exists a function $g_{\varepsilon}$ such that 
	$$\nu \left(B\left(x, \frac{c}{c+1} \,d\big(x,a^{(n)}\big)  \right) \right) \geq V_d \, \left(\frac{c}{c+1} \right)^{d} d(x,a^{(n)})^d\, g_{\varepsilon}(x).$$
	Then, noting that $ \frac{(1-c)^r-c^r}{(1+c)^r} \geq \frac{1}{3^r} -\big( \frac{c}{c+1}\big)^r \; >0$, since $c \in (0,\tfrac 12)$, Proposition~$\ref{inc}$ implies that
	\begin{equation}
	\label{equ1}
	e_r(a^{(n)},P)^r - e_r(a^{(n+1)},P)^r  
	\geq V_d \, \varphi_r\left(\frac{c}{c+1} \right) \int g_{\varepsilon}(x) d(x,a^{(n)})^{d+r} dP(x)
	\end{equation}  
	where $\ds \varphi_r(u)=\left(\frac{1}{3^r}-u^r\right)u^d, \; u \in (0,\tfrac13)$.
	Applying the reverse H\"older inequality with the conjugate H\"older exponents $p =-\frac rd$ and $q=\frac{r}{r+d}$ yields
	\begin{align*}
		e_r(a^{(n)},P)^r - e_r(a^{(n+1)},P)^r 
		 & \geq V_d \, \varphi_r\left(\frac{c}{c+1} \right) \left(\int g_{\varepsilon }(x)^{-\frac rd} dP(x) \right)^{-\frac dr} \left(\int d(x,a^{(n)})^r dP(x) \right)^{1+\frac dr} \\
		 & \geq V_d \, \varphi_r\left(\frac{c}{c+1} \right) \left(\int g_{\varepsilon} (x)^{-\frac rd} dP(x) \right)^{-\frac dr} \left(e_r(a^{(n)},P)^r \right)^{1+\frac dr} .
	\end{align*}
	Then, applying lemma~\ref{lemmab31} to the sequence $x_n=e_r(a^{(n)},P)^r $ with $C=V_d \, \varphi_r\left(\frac{c}{c+1} \right) \, \left(\int g_{\varepsilon} (x)^{-\frac rd} dP(x) \right)^{-\frac dr}$ and $\rho=\frac dr$, one obtains, for every $c \in (0,\tfrac12)$,
	$$e_r(a^{(n)},\P) \leq V_d^{-\frac 1d} \left(\frac rd\right)^{\frac 1d} \varphi_r\left(\frac{c}{c+1} \right)^{-\frac 1d} \left(\int g_{\varepsilon}^{-\frac rd} dP  \right)^{\frac1r} (n-1)^{-\frac1d}.$$
	Since in most applications $\varepsilon \mapsto \left(\int g_{\varepsilon}^{-\frac rd} dP  \right)^{\frac1r}$ is increasing on $(0,1/3)$, we are led to study $\varphi_r\left( \frac{c}{c+1}\right)^{-\frac 1d}$ subject to the constraint $c \in \big(0,\frac{\varepsilon}{1-\varepsilon}\big] \cap \big(0,\frac12\big)$.
	$\varphi_r$ is increasing in the neighborhood of $0$ and $\varphi_r(0)=0$, so, one has, for every $\varepsilon \in (0,\frac 13)$ small enough,
	$\varphi_r\left( \frac{c}{c+1}\right) \leq \varphi_r(\varepsilon), \mbox{ for } c \in (0,\tfrac{\varepsilon}{1-\varepsilon}].$ 
	This leads to specify $c$ as  
	$ c=\frac{\varepsilon}{1-\varepsilon} \mbox{, so that}  \frac{c}{c+1}=\varepsilon,$ 
	 to finally deduce the result.
%
	\hfill $\square$\\
\end{proof}

By specifying the measure $\nu$ and the function $g_{\varepsilon}$, we will obtain two first natural versions of the Pierce Lemma.

\begin{thm}[Pierce's Lemma]
	\label{Piercetype}
	$(a)$ Assume $\int_{\R^d} \|x\|^r dP(x) < +\infty$. Let $\delta >0$. Then $e_r(a_1,P)=\sigma_r(P)$ and
		$$\forall n \geq 2, \qquad e_r(a^{(n)},P) \leq \kappa_{d,\delta,r}^{\text{Greedy,Pierce}} \sigma_{r+\delta}(P) (n-1)^{-\frac1d}$$
		where $\ds \kappa_{d,\delta,r}^{\text{Greedy, Pierce}} \leq V_d^{-\frac1d} \left(\frac rd \right)^{\frac1d} \left(\left(\frac{\delta}{r} \right)^{\frac{r}{r+\delta}}+\left(\frac{r}{\delta}\right)^{\frac{\delta}{r+\delta}} \right)^{1+\tfrac{\delta}{r}} \left( \int_{\R^d} (\|x\| \vee 1)^{-d-\frac{d\delta}{r}}dx \right)^{\frac 1d}\, \min_{\varepsilon \in (0,\tfrac 13)} (1+\varepsilon)\varphi_r(\varepsilon)^{-\frac 1d}$.\\
		$(b)$ Assume $\int_{\R^d} \|x\|^r dP(x) < +\infty$. Let $\delta >0$. Then
		$$\forall n \geq 2, \; e_r(a^{(n)}, P) \leq \kappa_{d,r,\delta}^{\text{Greedy}} \left(\int \left( \|x-a_1\| \vee 1 \right) ^r \left( \log (\|x-a_1\| \vee e) \right)^{\frac{r}{d}+\delta} dP(x) \right)^{\frac{1}{r}}(n-1) ^{-\frac{1}{d}}$$ 
		where $
		\kappa_{d,r,\delta}^{\text{Greedy}} \leq  V_d^{-\frac1d}\left(\frac rd \right)^{\frac 1d}  \min_{\varepsilon \in (0,\frac13)} (1+\varepsilon) \varepsilon^{\frac 1d +\frac{\delta}{r}} \varphi_r(\varepsilon)^{-\frac 1d}.   \left(\int \frac{dx}{(1 \vee \|x\|)^{d}\left(\log(\|x\| \vee e) \right)^{1+\frac{d\delta}{r}}} \right)^{\frac 1d}.$\\
		In particular, if 
		$\int_{\R^d} \|x\|^r (\log^+\! \|x\|)^{\frac{r}{d}+\delta} dP(x) < +\infty$, then
		$$\lim \sup_n n^{\frac{1}{d}}\sup \{e_r(a^{(n)},P): (a_n) \, L^r\mbox{-optimal greedy sequence for } P \} < +\infty.$$
\end{thm}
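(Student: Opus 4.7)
The plan is to derive both assertions from Proposition~\ref{mainthm} by specifying well-chosen auxiliary probability measures $\nu$ and the associated control functions $g_\ve$ satisfying~(\ref{criterenu}). The identity $e_r(a_1,P)=\sigma_r(P)$ in (a) is nothing but the definition of the $L^r$-median $a_1$.

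For part (a), I would take $\nu$ to be the probability distribution on $\R^d$ with Lebesgue density
\[
f(x)=C_{d,\delta,r}^{-1}(\|x-a_1\|\vee 1)^{-d-d\delta/r},\qquad C_{d,\delta,r}:=\int_{\R^d}(\|y\|\vee 1)^{-d-d\delta/r}dy,
\]
which is finite for every $\delta>0$. For $x\in\mbox{supp}(P)$ and $t\in[0,\ve\|x-a_1\|]$, any $y\in B(x,t)$ satisfies $\|y-a_1\|\vee 1\le(1+\ve)(\|x-a_1\|\vee 1)$, hence
\[
\nu\bigl(B(x,t)\bigr)\ge C_{d,\delta,r}^{-1}(1+\ve)^{-d-d\delta/r}(\|x-a_1\|\vee 1)^{-d-d\delta/r}V_d t^d,
\]
which identifies $g_\ve$ in~(\ref{criterenu}). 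Since $g_\ve^{-r/d}(x)$ equals a constant times $(\|x-a_1\|\vee 1)^{r+\delta}$, Proposition~\ref{mainthm} delivers a bound on $e_r(a^{(n)},P)$ proportional to $\bigl(\int(\|x-a_1\|\vee 1)^{r+\delta}dP\bigr)^{1/r}(n-1)^{-1/d}$. It then remains to convert this $a_1$-centered moment into $\sigma_{r+\delta}(P)$: since $a_1$ is an $L^r$-median, the (quasi-)triangle inequality in $L^r$ yields $\|a_1-a^*\|\le 2\|X-a^*\|_r\le 2\|X-a^*\|_{r+\delta}$ for every $a^*\in\R^d$, whence $\|X-a_1\|_{r+\delta}\le 3\,\sigma_{r+\delta}(P)$ after taking the infimum; a Young inequality with conjugate exponents $(r+\delta)/r$ and $(r+\delta)/\delta$, applied with a free scale parameter optimized at the end, then produces the closed-form factor $\bigl((\delta/r)^{r/(r+\delta)}+(r/\delta)^{\delta/(r+\delta)}\bigr)^{1+\delta/r}$. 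Finally, minimizing the $\ve$-dependence yields the prefactor $\min_\ve(1+\ve)\varphi_r(\ve)^{-1/d}$ and hence $\kappa_{d,\delta,r}^{\text{Greedy,Pierce}}$.

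Part (b) proceeds in an identical fashion with the ``logarithmic'' density
\[
f_{\log}(x)\propto(\|x-a_1\|\vee 1)^{-d}\bigl(\log(\|x-a_1\|\vee e)\bigr)^{-1-d\delta/r},
\]
whose normalization is finite since, in polar coordinates around $a_1$, the tail reduces to $\int_1^\infty u^{-1-d\delta/r}du<+\infty$ via the change of variable $u=\log\rho$. The local lower bound on $\nu_{\log}(B(x,t))$ picks up a uniformly bounded perturbation (for $\ve\in(0,\tfrac13)$) from both the polynomial and the logarithmic parts, and Proposition~\ref{mainthm} delivers exactly the estimate of (b). The final $\limsup$ assertion is then immediate: under $\int\|x\|^r(\log^+\|x\|)^{r/d+\delta}dP<+\infty$, comparing $\|x\|$ and $\|x-a_1\|$ shows the displayed moment is finite, so the bound is $O(n^{-1/d})$ uniformly over every choice of $L^r$-optimal greedy sequence for $P$.

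The hard part will be the fine constant bookkeeping in (a): matching the Young-inequality factor $\bigl((\delta/r)^{r/(r+\delta)}+(r/\delta)^{\delta/(r+\delta)}\bigr)^{1+\delta/r}$ requires splitting $(\|x-a_1\|\vee 1)^{r+\delta}$ with exactly the right free parameter; everything else is routine, and part~(b) follows an entirely parallel route with the logarithmic density.
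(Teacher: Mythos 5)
Your choices of $\nu$ and $g_\ve$ in both parts match the paper's exactly, and your part~(b) follows the paper's route (including the appeal to boundedness of the set of $L^r$-medians for the uniform $\limsup$). The genuine divergence is in how you finish part~(a). The paper does \emph{not} pass through the triangle-inequality conversion $\|X-a_1\|_{r+\delta}\le 3\,\sigma_{r+\delta}(P)$; instead, after Proposition~\ref{mainthm} and $L^{r+\delta}$-Minkowski give a bound proportional to $\bigl(1+(1+\ve)\|X-a_1\|_{r+\delta}\bigr)^{1+\delta/r}$, it introduces a dilation equivariance argument: setting $X_\lambda:=\lambda(X-a_1)+a_1$ and $a_{\lambda,n}:=\lambda(a_n-a_1)+a_1$, one observes that $(a_{\lambda,n})_n$ is an $L^r$-optimal greedy sequence for $X_\lambda$ with $e_r(a^{(n)},X)=\tfrac1\lambda e_r(a_\lambda^{(n)},X_\lambda)$, and then minimizes $\lambda\mapsto\lambda^{-1}(1+(1+\ve)\lambda\cdot)^{1+\delta/r}$; the optimizer $\lambda=\tfrac{r}{\delta}\tfrac{1}{(1+\ve)\sigma_{r+\delta}}$ is what produces the factor $\bigl((\delta/r)^{r/(r+\delta)}+(r/\delta)^{\delta/(r+\delta)}\bigr)^{1+\delta/r}$. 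Your ``Young inequality with a free scale parameter'' is gesturing at the same optimization, but as written it is not a mechanism: there is no free parameter inside $\bigl(\|x-a_1\|\vee 1\bigr)^{r+\delta}$ to split. You would need either the equivariance trick above, or, equivalently, to replace the density by the one-parameter family $\gamma_R(x)\propto(\|x-a_1\|\vee R)^{-d(1+\delta/r)}$ and optimize over $R>0$ (using $K_R^{-1/d}=R^{-\delta/r}K_{\delta,r}^{-1/d}$); either of these yields the paper's exact constant.

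A second, smaller point: your preliminary step $\|X-a_1\|_{r+\delta}\le 3\,\sigma_{r+\delta}(P)$ is sound (for $r\ge 1$; for $r<1$ a quasi-triangle constant enters), but it inserts a factor~$3$ that the paper's $\kappa_{d,\delta,r}^{\text{Greedy,Pierce}}$ does not contain, so the constant you would end up with does not match the one stated. In fact the paper's own derivation really bounds $e_r(a^{(n)},P)$ by a multiple of $\|X-a_1\|_{r+\delta}$, which it then labels $\sigma_{r+\delta}$; if one insists on reading $\sigma_{r+\delta}(P)=\inf_a\|X-a\|_{r+\delta}$, your triangle-inequality step is precisely the patch that makes the statement literally correct, at the cost of the extra factor. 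Be explicit about which of the two quantities you are targeting, because the algebra that delivers the closed-form Young-type factor must be applied \emph{after} fixing which $L^{r+\delta}$-moment sits in the bound.
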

\begin{proof} 
	$(a)$ Let $\delta>0$ be fixed. We set $\nu(dx)=\gamma_{r,\delta}(x) \lambda_d(dx)$ where
		$$\gamma_{r,\delta}(x)=\frac{K_{\delta,r}}{(1 \vee \|x-a_1\|)^{d(1+\frac {\delta}{r})}}\;  \qquad \mbox{with} \qquad  K_{\delta,r}= \left(\int \frac{dx}{(1 \vee \|x\|)^{d(1+\frac {\delta}{r})}} \right)^{-1} <+\infty$$
		is a probability density with respect to the Lebesgue measure on $\R^d$.\\
		Let $\varepsilon \in (0,1)$ and $t >0$. For every $x \in \R^d$ such that $\varepsilon \|x-a_1\| \geq t $ and every $y \in B(x,t)$,
		$\|y-a_1\| \leq \|y-x\|+\|x-a_1\| \leq (1+\varepsilon) \|x-a_1\|$ so that 
		$$\nu(B(x,t)) \geq \frac{K_{\delta,r} V_d \,t^d}{\left(1 \vee \left[(1+\varepsilon)\|x-a_1\|\right]\right)^{d(1+\frac {\delta}{r})}}.$$
		Hence,~$(\ref{criterenu})$ is verified with $$g_{\varepsilon}(x)=\frac{K_{\delta,r}}{\left(1 \vee \left[(1+\varepsilon)\|x-a_1\|\right]\right)^{d(1+\frac {\delta}{r})}},$$
		so we can apply Proposition~$\ref{mainthm}$.  We have
		$$\int g_{\varepsilon}(x)^{-\frac rd} dP(x) \leq K_{\delta,r}^{-\frac rd} \int \left(1 \vee (1+\varepsilon)\|x-a_1\| \right)^{r+\delta} dP(x)$$
		so that, applying $L^{r+\delta}$-Minkowski inequality, one obtains	
		$$\left(\int g_{\varepsilon}(x)^{-\frac rd} dP(x) \right)^{\frac 1r} \leq K_{\delta,r}^{-\frac 1d} \left(1 + (1+\varepsilon)\sigma_{r+\delta} \right)^{1+\tfrac{\delta}{r}}.$$
		Consequently, by Proposition~$\ref{mainthm}$, for $\ve \!\in (0,1/3)$,
		\begin{align}
		\label{equ1b}
		e_r(a^{(n)},P) \leq  V_d^{-\frac1d} \left(\frac rd \right)^{\frac1d} K_{\delta,r}^{-\frac1d}\left(1 + (1+\varepsilon)\sigma_{r+\delta}\right)^{1+\tfrac{\delta}{r}} \varphi_r(\varepsilon)^{-\frac1d}(n-1)^{-\frac1d}
		\end{align}
		
		Now, we introduce an equivariance argument. For $\lambda>0$, let $X_{\lambda}:=\lambda(X-a_1)+a_1$ and $(a_{\lambda,n})_{n\geq 1}:=(\lambda(a_n-a_1)+a_1)_{n \geq 1}$. It is clear that $(a_{\lambda,n})_{n \geq 1}$ is an $L^r$-optimal greedy sequence for $X_{\lambda}$ and $e_r(a^{(n)},X)=\frac{1}{\lambda}e_r(a_{\lambda}^{(n)},X_{\lambda})$. 
		Plugging this in inequality~$(\ref{equ1})$ yields 
		\begin{align*}
		e_r(a^{(n)},P) \leq & V_d^{-\frac1d} \left(\frac rd \right)^{\frac1d} K_{\delta,r}^{-\frac1d}\, \frac{1}{\lambda}\, \left(1 + (1+\varepsilon)\,\lambda\,\sigma_{r+\delta}\right)^{1+\tfrac{\delta}{r}} \varphi_r(\varepsilon)^{-\frac1d}(n-1)^{-\frac1d}\\
		\leq & V_d^{-\frac1d} \left(\frac rd \right)^{\frac1d} K_{\delta,r}^{-\frac1d} \left(\lambda^{-\frac{r}{\delta+r}} + (1+\varepsilon)\,\lambda^{\frac{\delta}{\delta+r}}\,\sigma_{r+\delta} \right)^{1+\tfrac{\delta}{r}}\varphi_r(\varepsilon)^{-\frac1d}(n-1)^{-\frac1d}.
		\end{align*}
		Finally, one deduces the result by setting $\ds \lambda=\frac{r}{\delta} \frac{1}{(1+\varepsilon)\sigma_{r+\delta}}$. \\	
		$(b)$ Let $\delta>0$ be fixed. We set $\nu(dx)=\gamma_{r,\delta}(x) \lambda_d(dx)$ where
		$$\gamma_{r,\delta}(x)=\frac{K_{\delta,r}}{(1 \vee \|x-a_1\|)^{d}\left(\log(\|x-a_1\| \vee e) \right)^{1+\frac{d\delta}{r}}},$$ with $K_{\delta,r}= \left(\int \frac{dx}{(1 \vee \|x\|)^{d}\left(\log(\|x\| \vee e) \right)^{1+\frac{d\delta}{r}}} \right)^{-1} <+\infty,$
		is a probability density with respect to the Lebesgue measure on $\R^d$.\\
		
		Let $\varepsilon \in (0,1)$ and $t >0$. For every $x \in \R^d$ such that $\varepsilon \|x-a_1\| \geq t $ and every $y \in B(x,t)$,
		$\|y-a_1\| \leq \|y-x\|+\|x-a_1\| \leq (1+\varepsilon) \|x-a_1\|$ so that 
		\begin{align*}
		\nu(B(x,t)) \geq & \frac{K_{\delta,r}V_d t^d}{(1 \vee (1+\varepsilon)\|x-a_1\|)^{d}\left(\log((1+\varepsilon)\|x-a_1\| \vee e) \right)^{1+\frac{d\delta}{r}}}\\
		\geq & \frac{K_{\delta,r}V_d t^d}{(1+\varepsilon)^{d}\, \varepsilon^{1+\frac{d\delta}{r}}\, (1 \vee \|x-a_1\|)^{d}\left(\log(\|x-a_1\| \vee e) \right)^{1+\frac{d\delta}{r}}}
		\end{align*}
		since $\log(1+\varepsilon) \leq \varepsilon.$ Hence,~$(\ref{criterenu})$ is verified with $$g_{\varepsilon}(x)=\frac{K_{\delta,r}}{(1+\varepsilon)^{d}\, \varepsilon^{1+\frac{d\delta}{r}}\, (1 \vee \|x-a_1\|)^{d}\left(\log(\|x-a_1\| \vee e) \right)^{1+\frac{d\delta}{r}}},$$
		so we can apply proposition~$\ref{mainthm}$.  We have
		$$\left(\int g_{\varepsilon}(x)^{-\frac rd} dP(x)\right)^{\frac 1r} \leq K_{\delta,r}^{-\frac 1d} (1+\varepsilon) \varepsilon^{\frac 1d +\frac{\delta}{r}} \left( \int \left(1 \vee \|x-a_1\| \right)^{r} \left(\log(\|x-a_1\| \vee e) \right)^{\delta+\frac rd}dP(x)\right)^{\frac1r}.$$
		Consequently, one applies Proposition~$\ref{mainthm}$ to deduce the first part.
		For the second part of the proposition, we  start by noticing that 
		$$\left(1 \vee \|x-a_1\| \right)^{r} \leq \left(1 + \|x\|+\|a_1\| \right)^{r}\leq 2^{(r-1)_+}\left(\|x\|^r+(1+\|a_1\|)^r \right)$$
		and 
		\begin{equation*}
		\log(\|x-a_1\| \vee e) \leq \log(\|x\| \vee e)+ \frac{\|a_1\|\vee e}{\|x\| \vee e} \leq \log_+ \|x\|+1+\frac{\|a_1\|\vee e}{e}
		\end{equation*}
		where $\log_+ u =\log u \mathds{1}_{u\geq 1}$, 
		so that
		\begin{align*}
		\left(1 \vee \|x-a_1\| \right)^{r} \left(\log(\|x-a_1\| \vee e) \right)^{\delta+\frac rd}  
		\leq  & \; 2^{(r-1)_++(\frac rd+\delta-1)_+} \left( \|x\|^r \log_+\|x\|^{\frac rd+\delta}+A_2\|x\|^r\right.\\
		& \left. +A_1\log_+\|x\|^{\frac rd+\delta}+A_1A_2 \right)
		\end{align*}
		where $A_1= (1+\|a_1\|)^r$ and $A_2=\left(1+\frac{\|a_1\|\vee e}{e}\right)^{\frac rd+\delta}$.
		Since $\log\|x\|^{\frac rd+\delta}=\frac 1r \left(\frac rd+\delta \right) \log\|x\|^r$, then $\log_+\|x\|^{\frac rd+\delta}=\frac 1r \left(\frac rd+\delta \right) \log_+\|x\|^r$. Moreover, $\log_+\|x\|^r \leq \|x\|^r-1$ if $\|x\|^r \geq 1$ and equal to zero otherwise so 
		$$\log_+\|x\|^{\frac rd+\delta} \leq \frac 1r \left(\frac rd+\delta \right) (\|x\|^r-1)_+ \leq \frac 1r \left(\frac rd+\delta \right) (1+\|x\|^r). $$
		Consequently,
		$$\left(1 \vee \|x-a_1\| \right)^{r} \left(\log(\|x-a_1\| \vee e) \right)^{\delta+\frac rd}  \leq 2^{(r-1)_++(\frac rd+\delta-1)_+} \left( \|x\|^r \log_+\|x\|^{\frac rd+\delta}+A'_1\|x\|^r+A'_2 \right)$$
		where 
		$A'_1=A_2+\frac 1r \left(\frac rd+\delta \right)A_2$ and $A'_2=\frac 1r \left(\frac rd+\delta \right)A_1+A_1A_2$. 
		The result is deduced from the fact that $\sup \{\|a_1\|: a_1 \! \in  {\rm argmin}_{\xi\in \R^d}e_r(\{\xi\},P) < +\infty$ (see~\cite[Lemma 2.2]{GraLu00}) and $\kappa_{d,r,\delta}$ does not depend on $a_1$.
	\hfill $\square$
\end{proof}
\begin{rmq}
One checks that $\varphi_r$ attains its maximum at $\frac 13 \left( \frac{d}{d+r}\right)^{\frac 1r}$ on $(0,\tfrac13)$, so one concludes that
	$\min_{\varepsilon \in (0,\frac 13)}(1+\varepsilon)\varphi_r \left( \varepsilon \right)^{-\frac 1d} \leq \left(1+\frac13\left(\frac{d}{d+r} \right)^{\frac 1r}\right)3^{\frac rd+1} \left(1+\frac dr \right)^{\frac 1d}\left(1+\frac rd \right)^{\frac 1r}$
	and \\
	$\min_{\varepsilon \in (0,\frac 13)}(1+\varepsilon)\varepsilon^{\frac 1d +\frac{\delta}{r}}\varphi_r \left( \varepsilon \right)^{-\frac 1d} \leq
	\left(1+\frac13\left(\frac{d}{d+r} \right)^{\frac 1r}\right)
	3^{1+\frac{r-1}{d}-\frac{\delta}{r}} \left(1+\frac dr \right)^{\frac 1d-\frac1r}\left(1+\frac rd \right)^{\frac 1r}.$
	
\end{rmq}

At this stage, one can wonder if it is possible to have a kind of hybrid Zador-Pierce result where, if $P=h.\lambda_d$, one has  $$e_r(a^{(n)},P)\leq C \|h\|_{\frac{d}{d+r}} n^{\frac 1d}$$ for some real constant $C$. To this end, we have to consider 
$$\nu=\frac{h^{\frac{d}{d+r}}}{\int h^{\frac{d}{d+r}}d\lambda_d}.\lambda_d.$$
This is related to the following local growth control condition of densities.
\begin{defi}
	Let $A \subset \R^d$. A function $f:\R^d \rightarrow \R_+$ is said to be  almost radial non-increasing  on A w.r.t. $a \in A$ if there exists a norm $\|.\|_0$ on $\R^d$ and real constant
	$ M\!\in (0,1]$ such that
	\begin{equation}
	\label{radialtails}
	\forall x\! \in 
	A\setminus\{a\}, \quad f_{|B_{\|.\|_0}(a,\|x-a\|_0) \cap (A\setminus\{a\})}
	\geq Mf(x).  
	\end{equation}
	If~$(\ref{radialtails})$ holds for $M=1$, then $f$ is called radial non-increasing on $A$ w.r.t. $a$. 
\end{defi}

\begin{rmq}
	\noindent $(a)$~$(\ref{radialtails})$ reads $f(y) \geq M f(x)$ for all $x,y \! \in A\setminus\{a\}$
	for which $\|y-a\|_0 \leq \|x-a\|_0$.
		
		\smallskip
		\noindent $(b)$ If $f$ is radial non-increasing on $\R^d$ w.r.t. $a \in \R^d$ with parameter $\|.\|_0$,
		then there exists a non-increasing measurable function $g:(0, +\infty) \rightarrow \R_+$ satisfying $f(x) = g(\|x -a\|_0 )$ for every $x\neq a$.
		
		\smallskip
		\noindent $(c)$ From a practical point of view, many classes of distributions satisfy~$(\ref{radialtails})$, e.g. the $d$-dimensional normal distribution $\mathcal{N}(m,\sigma_d)$ for which one considers $h(y)=\frac{1}{(2\pi)^{\frac d2}\mbox{\rm det}(\sigma_d)^{\frac12}}e^{-\frac{y^2}{2}}$ and density $f(x)= h(\|x-m\|_0)$ where $\|x\|_0=\|\sigma_d^{-\frac12}x\|$, and the family of distributions defined by $f(x) \propto \|x\|^c e^{-a\|x\|^b}$, for every $x \in \R^d, a,b >0$ and $c>-d$, for which one considers $h(u)=u^ce^{-au^b}$. In the one dimensional case, we can mention the Gamma distribution, the Weibull distributions, the Pareto distributions and the log-normal distributions.
\end{rmq}
\begin{thm}
	\label{zadorpierce}
		Assume $P=h. \lambda_d$ with $h \in L^{\frac{d}{d+r}}(\lambda_d)$ and  $\int_{\R^d} \|x\|^r dP(x) <+\infty$. Let $a_1$ denote the $L^r$-median of $P$. Assume that $\mbox{supp}(P) \subset A$ and $a_1 \in A$ for some $A$ star-shaped and peakless with respect to $a_1$ in the sense that
		\begin{equation}
		\label{peakless}
		\mathfrak{p}(A,\|.-a_1\|) := \inf \left\{ \frac{\lambda_d(B(x,t)\cap A)}{\lambda_d(B(x,t))}; x \in A,0<t\le  \|x-a_1\| \right\} >0.
		\end{equation}
		Assume $h$ is almost radial non-increasing on $A$ with respect to $a_1$ in the sense of~$(\ref{radialtails})$.
		Then,
		$$ 
		\forall\, n\ge 2,\quad e_r(a^{(n)},P) \leq \kappa_{d,r,M,C_0,\mathfrak{p}(A,\|.-a_1\|)}^{\text{G,Z,P}} \;\|h\|_{L^{\frac{d}{d+r}}(\lambda_d)}^{\frac{1}{r}} (n-1)^{-\frac{1}{d}},
		$$	
		where
		$\kappa_{d,r,M,C_0,\mathfrak{p}(A,\|.-a_1\|)}^{\text{G,Z,P}} \leq  \frac{2C_0^2\,r^{\frac 1d}\,}{d^{\frac1d}M^{d+r}V_d^{\frac1d}\mathfrak{p}(A,\|.-a_1\|)^{\frac1d}} \min_{\varepsilon \in (0,\frac13)} \varphi_r(\varepsilon)^{-\frac 1d}.$
\end{thm}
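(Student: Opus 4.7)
The plan is to apply Proposition~\ref{mainthm} with the Zador-optimal auxiliary distribution
$$\nu := \frac{1}{Z}\, h^{\frac{d}{d+r}}\cdot \lambda_d, \qquad Z := \int h^{\frac{d}{d+r}}\, d\lambda_d = \|h\|_{L^{d/(d+r)}(\lambda_d)}^{d/(d+r)}.$$
The exponent $d/(d+r)$ is forced by the form of the conclusion of Proposition~\ref{mainthm}: if the local control~(\ref{criterenu}) can be verified with a $g_\varepsilon$ proportional to $h^{d/(d+r)}/Z$, then the identity $1 - \frac{r}{d+r} = \frac{d}{d+r}$ makes $\int g_\varepsilon^{-r/d}\, dP$ collapse into a constant times $Z^{1+r/d} = \|h\|_{L^{d/(d+r)}}$, which is exactly what produces the Zador factor $\|h\|_{L^{d/(d+r)}}^{1/r}$ in the bound.

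The heart of the proof is therefore the verification of~(\ref{criterenu}) with such a $g_\varepsilon$. Let $C_0\geq 1$ be a norm-equivalence constant with $C_0^{-1}\|\cdot\|_0\leq\|\cdot\|\leq C_0\|\cdot\|_0$. For $x \in \mbox{supp}(P)\setminus\{a_1\}$ and $0<t\leq\varepsilon\|x-a_1\|$, I set $v=(x-a_1)/\|x-a_1\|_0$, $\delta = t/(2C_0)$, and shift the center to $x' := x-\delta v$. The point $x'$ lies on the segment $[a_1,x]$ for $\varepsilon\leq 1$ (because $\delta \leq \tfrac{\varepsilon}{2}\|x-a_1\|_0$), so $x' \in A$ by star-shape of $A$, and $\|x'-a_1\|_0 = \|x-a_1\|_0-\delta$. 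A routine triangle-inequality computation using $\|v\|\leq C_0$ then shows
$$B(x', \tfrac{t}{2C_0^2}) \;\subset\; B_{\|\cdot\|_0}(x',\tfrac{t}{2C_0}) \;\subset\; B(x,t)\cap\bigl\{y\colon \|y-a_1\|_0\leq\|x-a_1\|_0\bigr\}.$$
Moreover $\|x'-a_1\| = (1-\delta/\|x-a_1\|_0)\|x-a_1\| \geq (1-\tfrac\varepsilon 2)\|x-a_1\|$, so $\tfrac{t}{2C_0^2}\leq\|x'-a_1\|$ provided $\varepsilon/(2C_0^2) \leq 1-\varepsilon/2$, which holds for every $\varepsilon\in(0,1/3)$ and every $C_0\geq 1$. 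The peakless condition~(\ref{peakless}) then applies at $(x',\tfrac{t}{2C_0^2})$, giving
$$\lambda_d\bigl(B(x',\tfrac{t}{2C_0^2})\cap A\bigr) \geq \mathfrak{p}(A,\|\cdot-a_1\|)\,V_d\,\bigl(\tfrac{t}{2C_0^2}\bigr)^d,$$
and on this intersection $h\geq Mh(x)$ by the almost radial non-increasing assumption~(\ref{radialtails}). Therefore
$$\nu(B(x,t)) \geq \frac{M^{d/(d+r)}\,\mathfrak{p}(A,\|\cdot-a_1\|)\,V_d}{(2C_0^2)^d\, Z}\; h(x)^{d/(d+r)}\, t^d,$$
which is~(\ref{criterenu}) with $g_\varepsilon(x) = M^{d/(d+r)}\mathfrak{p}(A,\|\cdot-a_1\|)(2C_0^2)^{-d}Z^{-1}h(x)^{d/(d+r)}$ (in fact independent of $\varepsilon$).

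A direct computation using $1 - r/(d+r) = d/(d+r)$ yields $(\int g_\varepsilon^{-r/d}\,dP)^{1/r} = 2C_0^2\, M^{-1/(d+r)}\,\mathfrak{p}^{-1/d}\,\|h\|_{L^{d/(d+r)}}^{1/r}$. Substituting this into the conclusion of Proposition~\ref{mainthm} and minimizing $\varphi_r(\varepsilon)^{-1/d}$ over $\varepsilon\in(0,1/3)$ delivers the announced estimate. The main obstacle I anticipate is the geometric construction in the second paragraph: a single point $x'$ must be produced whose sub-ball is simultaneously contained in $B(x,t)$ (so that it contributes to $\nu(B(x,t))$), lies on the ``inner'' side $\{\|y-a_1\|_0\leq\|x-a_1\|_0\}$ (so that the almost-radial lower bound on $h$ applies), and is centered at a point of $A$ at $\|\cdot\|$-distance $\gtrsim t$ from $a_1$ (so that peakless applies). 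The shift by $\delta v$ of magnitude $t/(2C_0)$ along the direction from $a_1$ to $x$ achieves all three simultaneously; the constant $C_0$ and the shrinking factor $1/(2C_0^2)$ arise only from reconciling the auxiliary norm $\|\cdot\|_0$ (governing the decay of $h$) with the working norm $\|\cdot\|$ (underlying the balls in $\nu$ and in peakless).
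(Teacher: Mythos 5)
Your proposal is correct and follows essentially the same route as the paper: the same auxiliary measure $\nu = h^{\frac{d}{d+r}}\lambda_d/\!\int h^{\frac{d}{d+r}}d\lambda_d$, the same shifted center $x'=\big(1-\tfrac{t}{2C_0\|x-a_1\|_0}\big)x+\tfrac{t}{2C_0\|x-a_1\|_0}a_1$ and sub-ball $B(x',\tfrac{t}{2C_0^2})$ to combine the peakless and almost-radial-non-increasing hypotheses (the paper packages this as Lemma~\ref{lem2}), and the same reduction of $\big(\int g_\varepsilon^{-r/d}dP\big)^{1/r}$ to $\|h\|_{L^{d/(d+r)}}^{1/r}$ before invoking Proposition~\ref{mainthm}. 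Your constant even carries the sharper factor $M^{-1/(d+r)}$ in place of the paper's stated $M^{-(d+r)}$, which is consistent with the theorem since $M\le 1$.
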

\begin{rmq}\label{rk:2.9}
$(a)$ If $A=\R^d$, then $\mathfrak{p}(A,\|.-a\|)=1$ for every $a \in \R^d$.

\smallskip
\noindent  $(b)$ The most typical unbounded sets satisfying~$(\ref{peakless})$ are convex cones that is cones  $K \subset \R^d$  of vertex $0$ with $0 \in K$ ($K\neq \varnothing$) and such that $\lambda x \in K$ for every $x \in K$ and $\lambda \geq 0$. For such convex cones $K$  with $\lambda_d(K) >0$, we even have that the lower bound 
		$$
		\mathfrak{p}(K) := \inf \left\{\frac{\lambda_d(B(x,t)\cap K)}{\lambda_d(B(x,t))}; x \!\in K,\,t>0 \right\} = \frac{\lambda_d\big(B(0,1)\cap K) \big)}{V_d}>0.
		$$
		Thus if $K= \R_+^d$, then $\mathfrak{p}(K)= 2^{-d}$.
\end{rmq}
\noindent The proof of theorem~\ref{zadorpierce} is based on the following lemma.
\begin{lem}
	\label{lem2}
	Let $\nu=f . \lambda_d$ be a probability measure on $\R^d$ where $f$ is almost radial non-increasing 
	on $A\!\in \mathcal{B}(\R^d)$ w.r.t. $a_1\!\in A$, $A$ being star-shaped relative to $a_1$ and  satisfying~$(\ref{peakless})$.
	Then, for every $x\! \in A$ and   positive $ t\!\in (0,  \|x-a_1\|]$,
	$$\nu(B(x,t))\geq M \mathfrak{p}(A,\|.-a_1\|)(2C_0^2)^{-d} V_d f(x) t^d$$
	where $C_0 \in [1,+\infty)$ satisfies, for every $x \in \R^d$, $\ds \frac{1}{C_0}\|x\|_0 \leq \|x\| \leq C_0 \|x\|_0$.
\end{lem}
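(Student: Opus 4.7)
The strategy is to exhibit a single small $\|\cdot\|$-ball $B(x',\tau)\subset B(x,t)$ on which the almost radial non-increasing assumption forces $f\geq Mf(x)$, and whose intersection with $A$ is then bounded below by the peakless property~$(\ref{peakless})$. Concretely, the three conditions I want on $B(x',\tau)$ are: (i) $B(x',\tau)\subset B(x,t)$, so it lives in the domain of integration; (ii) $B(x',\tau)\subset B_{\|.\|_0}(a_1,\|x-a_1\|_0)$, so that by~$(\ref{radialtails})$ one has $f(y)\geq Mf(x)$ on $B(x',\tau)\cap A\setminus\{a_1\}$; (iii) $x'\in A$ and $\tau\leq \|x'-a_1\|$, so that~$(\ref{peakless})$ applied at $x'$ gives $\lambda_d(B(x',\tau)\cap A)\geq \mathfrak{p}(A,\|.-a_1\|)V_d\tau^d$. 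Once such a pair $(x',\tau)$ is exhibited, the chain
\[
\nu(B(x,t))\;\geq\;\int_{B(x',\tau)\cap A} f\,d\lambda_d\;\geq\; Mf(x)\,\lambda_d(B(x',\tau)\cap A)\;\geq\;M\,\mathfrak{p}(A,\|.-a_1\|)\,V_d\,f(x)\,\tau^d
\]
reduces everything to the right numerical choice of $\tau$.

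The natural candidate for $x'$ is a point slid a bit from $x$ toward $a_1$ along the segment $[a_1,x]$: set $x':=a_1+(1-s)(x-a_1)$ with $s\in(0,\tfrac12]$ to be fixed. Star-shapedness of $A$ w.r.t.\ $a_1$ gives $x'\in A$, and $\|x'-a_1\|=(1-s)\|x-a_1\|$. I would then choose $s$ and $\tau$ to make (i) and (ii) simultaneously tight. Condition (i) reduces by the triangle inequality to $\tau+s\|x-a_1\|\leq t$. Condition (ii) becomes, after converting $\|\cdot\|$-radius into $\|\cdot\|_0$-radius via $\|\cdot\|_0\leq C_0\|\cdot\|$ and using $\|x'-a_1\|_0=(1-s)\|x-a_1\|_0$, the constraint $C_0\tau\leq s\|x-a_1\|_0$; invoking $\|x-a_1\|_0\geq \|x-a_1\|/C_0$ strengthens this to $\tau\leq s\|x-a_1\|/C_0^2$. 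Balancing the two constraints is achieved with $s=t/(2\|x-a_1\|)$ and $\tau=t/(2C_0^2)$, at which point $s\leq\tfrac12$ and $\tau\leq t/2\leq (1-s)\|x-a_1\|=\|x'-a_1\|$, so (iii) is automatic.

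Inserting $\tau=t/(2C_0^2)$ into the display yields the announced bound $M\,\mathfrak{p}(A,\|.-a_1\|)(2C_0^2)^{-d}V_d f(x) t^d$. The only mildly subtle point is the interplay of the two norms: condition (ii) is most naturally expressed in $\|\cdot\|_0$ whereas $B(x,t)$ is a $\|\cdot\|$-ball, so each passage between the two norms costs one factor of $C_0$, and this double conversion is exactly what forces the $C_0^2$ exponent in the final constant. The rest is bookkeeping.
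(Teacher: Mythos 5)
Your proof is correct and follows essentially the same route as the paper: both slide $x$ toward $a_1$ to a point $x'$ on the segment (the paper takes $\|x'-x\|_0=\tfrac{t}{2C_0}$, you take $\|x'-x\|=\tfrac t2$, an immaterial difference) and bound $\nu(B(x,t))$ from below via the ball $B\bigl(x',\tfrac{t}{2C_0^2}\bigr)$, using~(\ref{radialtails}) and then~(\ref{peakless}) at $x'$. No gaps.
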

\begin{proof}
	For every $x \in A$ and $t > 0$, 
	$$\nu(B(x,t))=\int_{B(x,t)} fd\lambda_d \geq \int_{B(x,t)\cap A \cap \{f \geq Mf(x)\} } fd\lambda_d \;\geq \; Mf(x)\lambda_d\big( B(x,t)\cap A \cap \{f \geq Mf(x)\} \big)$$
	and
	$$B(x,t)\cap (A\setminus\{a_1\}) \cap B_{\|.\|_0}(a_1,\|x-a_1\|_0) \subset B(x,t)\cap A \cap \{f \geq Mf(x)\}.$$
	Now, assume $0<t \le \|x-a_1\|\le C_0\|x-1\|_0$. Setting $x':=\left(1-\frac{t}{2C_0 \|x-a_1\|_0} \right)x+\frac{t}{2C_0\|x-a_1\|_0}a_1 \in A$ (since $A$ is star-shaped with respect to $a_1$), we notice that, for $y \in B\left(x',\frac{t}{2C_0^2} \right) \subset B_{\|.\|_0}\left(x',\frac{t}{2C_0} \right)$, 
	$$
	\|y-x\| \leq \|y-x'\|+C_0\|x'-x\|_0 \leq \frac{t}{2C_0^2}+C_0 \left\|\frac{t}{2C_0\|x-a_1\|_0}(x-a_1) \right\|_0 =\frac{t}{2C_0^2}+\frac{t}{2} \leq t
	$$
	and
	$$
	\|y-a_1\|_0  \leq \|y-x'\|_0+\|x'-a_1\|_0 \leq \frac{t}{2C_0}+ \left\|\left(1-\frac{t}{2C_0\|x-a_1\|_0}\right)(x-a_1) \right\|_0 =\|x-a_1\|_0,
	$$
	so that,
	$B\left(x',\frac{t}{2C_0^2} \right) \subset B(x,t) \cap B_{\|.\|_0}(a_1,\|x-a_1\|_0).$ 
	Consequently,
	$$\nu(B(x,t)) \geq Mf(x) \lambda_d \left(B\left(x',\frac{t}{2C_0^2} \right)\cap A \right).$$
	Moreover, 
	$\frac{t}{2C_0^2}\leq \frac{t}{2} \leq \frac{1}{2} \|x-a_1\| \leq \|x'-a_1\|.$ Hence, we have 
	$$\lambda_d \left(B\left(x',\frac{t}{2C_0^2} \right)\cap A \right) \geq \mathfrak{p}(A,\|.-a_1\|) \lambda_d \left(B\left(x',\frac{t}{2C_0^2} \right) \right) =\mathfrak{p}(A,\|.-a_1\|)(2C_0^2)^{-d}t^d \lambda_d(B(0,1)).$$ 
	\hfill $\square$
\end{proof}
\begin{refproof}
	Consider 
	$$ 
	\nu=h_r .\lambda_d:=\frac{h^{\frac{d}{d+r}}}{\int h^{\frac{d}{d+r}}d\lambda_d}.\lambda_d.
	$$ 
	Notice that $h_r$ is alsmost radial non-increasing on $A$ w.r.t. $a_1$ with parameter $M^{\frac{d}{d+r}}$ so that Lemma~\ref{lem2} yields for every $x\!\in A$ and $t\!\in [0, \|x-a_1\|]$
	\[
	\nu\big( B(x,t)\big)\ge M^{\frac{d}{d+r}}\mathfrak{p}(A,\|\cdot-a_1\|) (2C_0^2)^{-d}V_d h_r(x)t^d.
	\]
	Consequently, using that 
	\[
	\int_{\R^d} h_r^{-\frac rd} dP = \|h\|_{L^{\frac{d}{d+r}}(\lambda_d)},
	\]
	the assertion follows from Proposition~\ref{mainthm}.
	\end{refproof}
\begin{rmq}
	Note that, by applying H\"older inequality with the conjugate exponents $p=1+\frac rd$ and $q =1+\frac dr$, one has
	$$
	\int_{\R^d} h(\xi)^{\frac{d}{d+r}}d\xi \leq \left(\int_{\R^d} h(\xi) (1 \vee |\xi|)^{r+\delta}d\xi \right)^{\frac{d}{d+r}}\left( \int_{\R^d} \frac{d\xi}{(1 \vee |\xi|)^{d(1+\frac{\delta}{r})}}\right)^{\frac{r}{d+r}}.
	$$
	Consequently, since $\ds \int_{\R^d} \frac{d\xi}{(1 \vee |\xi|)^{d(1+\frac{\delta}{r})}} < +\infty$, one deduces that 
	$\|h\|_{\frac{d}{d+r}}^{\frac 1r} \asymp \sigma_{r+\delta}^{1+\frac{\delta}{r}}.$
\end{rmq}
\noindent We note that Zador theorem implies 
$\liminf_n n^{\frac{1}{d}}e_r(a^{(n)},P) \geq \liminf_n n^{\frac{1}{d}} e_{r,n}(P,\R^d)\geq Q_r(P)^{\frac{1}{r}}.$ 
The next proposition may appear as a refinement of Pierce's Lemma and Theorem~$\ref{zadorpierce}$ in the sense that it gives a lower convergence rate for the discrete derivative of the quantization error, that is its increment.
\begin{prop}
	Assume $\int_{\R^d} \|x\|^r dP(x) <+\infty$. Then, 
 	$$\liminf_n n^{1+\frac{r}{d}}\min_{1\leq i \leq n} \left(e_r(a^{(i)},P)^r-e_r(a^{(i+1)},P)^r \right) >0.$$
\end{prop}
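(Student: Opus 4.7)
The plan is to combine two ingredients: a self-improving lower bound on each increment $\delta_n := e_r(a^{(n)},P)^r - e_r(a^{(n+1)},P)^r$ in terms of the current quantization error, and the Zador asymptotic lower bound on the $n$-th optimal quantization error recalled right before the statement.

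First, I would retrace verbatim Step~2 of the proof of Proposition~\ref{mainthm} with an auxiliary probability distribution $\nu$ satisfying the control~(\ref{criterenu}) for some $g_\varepsilon$ with $\int g_\varepsilon^{-r/d}\,dP < +\infty$ (for instance one of the densities $\gamma_{r,\delta}$ used in the proof of Theorem~\ref{Piercetype}, assuming the natural additional integrability of $X$ underlying these Pierce-type bounds). Applying the reverse Hölder inequality exactly as in the line following~(\ref{equ1}) produces a constant $K = K(P,d,r) > 0$ independent of $n$ such that
\[
\delta_n \;\geq\; K\, e_r(a^{(n)},P)^{r+d}, \qquad n\geq 1.
\]

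Second, since $a^{(n)}$ is a particular $n$-point grid, $e_r(a^{(n)},P)\geq e_{r,n}(P)$, and Zador's theorem (Theorem~\ref{Zadoretpierce}(a)) yields $c_0 > 0$ and $n_0\in\N$ such that $e_r(a^{(n)},P) \geq c_0\, n^{-1/d}$ for every $n\geq n_0$. To convert this into a uniform lower bound on $\min_{1\leq i\leq n}\delta_i$, I would argue by contradiction: if $\delta_{i_n} < \varepsilon\, n^{-1-r/d}$ for some $i_n\in\{1,\ldots,n\}$ and some $\varepsilon \in (0, K c_0^{r+d})$, then the previous display applied at level $i_n$ yields
\[
e_r(a^{(i_n)},P)^{r+d} \;\leq\; \delta_{i_n}/K \;<\; (\varepsilon/K)\,n^{-1-r/d},
\]
hence $e_r(a^{(i_n)},P) < (\varepsilon/K)^{1/(r+d)}\, n^{-1/d}$ (using the elementary identity $(1+r/d)/(r+d)=1/d$). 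The monotonicity of $k\mapsto e_r(a^{(k)},P)$, recalled in the introduction when $\mbox{supp}(P)$ is infinite, forces
\[
e_r(a^{(n)},P) \;\leq\; e_r(a^{(i_n)},P) \;<\; c_0\, n^{-1/d}
\]
for $n\geq n_0$, contradicting Zador. One concludes that $\liminf_n n^{1+r/d}\min_{1\leq i\leq n}\delta_i \geq K c_0^{r+d} > 0$.

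The delicate part is to secure both ingredients simultaneously: the reverse-Hölder constant $K$ requires $\int g_\varepsilon^{-r/d}\,dP < +\infty$ for some admissible $\nu$, which under $X\in L^r(P)$ alone typically asks for a mild extra integrability such as the logarithmic moment of Theorem~\ref{Piercetype}(b); and Zador's asymptotic lower bound is non-trivial only if the absolutely continuous component of $P$ carries positive mass, i.e.\ $Q_r(P) > 0$. Both conditions are implicit in the setting of this section, and granted them the contradiction argument above produces the stated positive liminf.
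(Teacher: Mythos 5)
Your overall strategy --- a micro--macro lower bound of the form $\delta_n \gtrsim e_r(a^{(n)},P)^{d+r}$ combined with Zador's asymptotic lower bound and the monotonicity of $k\mapsto e_r(a^{(k)},P)$ --- is the right one, and your concluding contradiction step is sound. The gap is in how you manufacture the constant $K$. You obtain $\delta_n \ge K\, e_r(a^{(n)},P)^{d+r}$ by running the reverse H\"older step of Proposition~\ref{mainthm} with a globally supported auxiliary measure $\nu$, which forces the hypothesis $\int g_\varepsilon^{-r/d}\,dP<+\infty$. Since any probability density on $\R^d$ must decay at infinity, $g_\varepsilon^{-r/d}$ grows at least like $\|x\|^{r}(\log\|x\|)^{(1+\alpha)r/d}$ for some $\alpha>0$, so this integral is \emph{not} finite under the stated assumption $\int\|x\|^r\,dP<+\infty$ alone. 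Your remark that the extra integrability is ``implicit in the setting of this section'' is not correct: the proposition is stated, and proved in the paper, under the bare $L^r$ moment condition, and the whole point of the paper's argument is to avoid any Pierce-type surcharge. As written, your proof establishes the statement only for distributions with an additional moment (e.g.\ the logarithmic moment of Theorem~\ref{Piercetype}(b)).

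The missing idea is localization. The paper fixes $N$ with $P(B(0,N))>0$ (implicitly, such that the absolutely continuous part of $P$ charges $B(0,N)$), takes $\nu=\mathcal{U}(B(0,N))$ in Proposition~\ref{inc} and restricts the outer integral to $B(0,N)\cap\mbox{supp}(P)$. One checks $\nu(B(x,t))\ge (2N)^{-d}t^d$ for $x\in B(0,N)$ and $t\le N$, and since $d(x,a^{(n)})$ is uniformly bounded on $B(0,N)\cap\mbox{supp}(P)$ one may choose $c$ small enough that the relevant radii stay below $N$. This yields, with no reverse H\"older inequality and no extra moments,
\[
e_r(a^{(n)},P)^r-e_r(a^{(n+1)},P)^r \;\ge\; c_N\, P(B(0,N))\, e_{d+r}\big(a^{(n)},P(\cdot\,|\,B(0,N))\big)^{d+r},
\]
after which monotonicity of the right-hand side in $n$ and Zador's lower bound applied \emph{at order $d+r$} to the compactly supported conditional law $P(\cdot\,|\,B(0,N))$ (which has moments of every order) give the $n^{-1-r/d}$ rate. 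The point you do identify correctly, and which both arguments share, is that the conclusion is non-trivial only when $P$ has a non-zero absolutely continuous component, so that the relevant Zador lower bound is positive.
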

\begin{proof}
	We start by choosing $N>0$ such that $P(B(0,N))>0$. Proposition~$\ref{inc}$ yields, for every probability measure $\nu $ on $\R^d$, for every $n \geq n_0$ and $c \in \left(0,\frac{1}{2}\right)$, 
	\begin{align*}
	e_r(a^{(n)},P)^r & - e_r(a^{(n+1)} ,P)^r \\
	& \geq \frac{(1-c)^r-c^r}{(c+1)^r} \int_{B(0,N)\cap supp(P)} \nu \left(B\left(x, \frac{c}{c+1}d\left(x,a^{(n)}\right)  \right) \right) d\left(x,a^{(n)}\right)^r dP.
	\end{align*}
	We choose $\nu=\mathcal{U}(B(0,N))$. Then, for every $x \in B(0,N), \, t \leq N$ and $x'=\left( 1-\frac{t}{2N}\right)x$, one has 
	$B\left( x',\frac{t}{2}\right) \subset B(x,t) \cap B(0,N)$
	since, for every $y \in B\left( x',\frac{t}{2}\right)$,
	$$\|y-x\|\leq \|y-x'\|+\|x'-x\| \leq \frac{t}{2}+\frac{t}{2N}\|x\| \leq t$$
	and $$\|y\|\leq \|y-x'\|+\|x'\|\leq \frac{t}{2}+\left( 1-\frac{t}{2N}\right)\|x\|\leq \frac{t}{2}+\left( 1-\frac{t}{2N}\right)N\leq N.$$
	Consequently, $$\nu(B(x,t)) \geq \frac{\lambda_d(B(x',\frac{t}{2}))}{\lambda_d(B(0,N))}=(2N)^{-d}t^d.$$
	Moreover, we denote $C:=\sup_{n \geq 1} \max_{x \in B(0,N) \cap supp(P)} d(x,a^{(n)})$ which is finite because $a^{(n)} \in \overline{conv}(supp(P))$.
	Consequently, for every $n \geq n_0$ and every $c \in \left(0,\frac{1}{2} \right)$ such that $\ds \frac{c}{c+1}C \leq N$,
	\begin{align*}
	e_r(a^{(n)},P)^r  - e_r(a^{(n+1)} ,P)^r &\geq  \frac{(1-c)^r-c^r}{(c+1)^r} \left(\frac{c}{c+1}\right)^d (2N)^{-d} \int_{B(0,N)} d(x,a^{(n)})^{d+r}dP(x)\\
	& \geq \frac{(1-c)^r-c^r}{(c+1)^r} \left(\frac{c}{c+1}\right)^d (2N)^{-d} P(B(0,N)) e_{d+r}^{d+r}(a^{(n)},P(.|B(0,N))).
	\end{align*}  
	Now, using that $\left(e_{d+r}^{d+r}\left(a^{(n)},P(.|B(0,N))\right)\right)_{n \geq 1}$ is nonincreasing and relying on Zador's theorem, we deduce
	$$\liminf_n n^{1+\frac{r}{d}}\min_{1\leq i \leq n} \left(e_r(a^{(i)},P)^r-e_r(a^{(i+1)},P)^r \right)>0.$$
	\hfill $\square$
\end{proof}
\begin{rmq}
	For every $m,n \in \N$, if we denote $W_b(a^{(n)})$ the Vorono\"i cell associated to the sequence $a^{(n)}$ of centroid $b \in a^{(n)}$ and use the fact that $e_r(a^{(n+1)},X) \leq e_r(a^{(n)}\cup \{b\},X)$ for every $b \in \R^d$, we deduce
	\begin{align*}
	e_r(a^{(n)},X)^r-e_r(a^{(n+m)},X)^r =& \sum_{b \in a^{(n)}} \int_{W_b(a^{(n+m)})} \left( d(x,a^{(n)})^r-\|x-b\|^r \right) dP \\
	&+\sum_{b \in a^{(n+m)}\setminus a^{(n)}} \int_{W_b(a^{(n+m)})} \left( d(x,a^{(n)})^r-\|x-b\|^r \right) dP\\
	& = \sum_{b \in a^{(n+m)}\setminus a^{(n)}} \int_{W_b(a^{(n+m)})} \left( d(x,a^{(n)})^r-d(x,a^{(n)} \cup \{b\})^r \right) dP \\
	& \leq m \, e_r(a^{(n)},X)^r-e_r(a^{(n+1)},X)^r.
	\end{align*}
Consequently, considering $n=i$ and knowing that $l \rightarrow e_r(a^{(l)},X)$ is non-increasing, one has
	$$\min_{1 \leq i \leq n}\left(e_r(a^{(i)},X)^r-e_r(a^{(i+1)},X)^r \right) \geq \frac{1}{m} \left(e_r(a^{(n)},X)^r-e_r(a^{(m)},X)^r \right).$$	
\end{rmq}
\section{Distortion mismatch}
\label{distortionmismatch}
We address now the problem of distortion mismatch, i.e. the property that the rate optimal decay property of $L^r$-quantizers remains true for $L^s(P)$-quantization error for $s \in (0, +\infty)$. This problem was originally investigated in \cite{mismatch08} for optimal quantizers. If $s \leq r$, the monotonicity of the $L^s$-norm as a function of $s$ ensures that any $L^r$-optimal greedy sequence remains $L^s$-rate optimal for  the $L^s$-norm. The challenge is when $s$ is larger than $r$. The problem is solved in \cite{LuPa15} for $s \in (0,+\infty)$ relying on an integrability assumption of the $b$-maximal function $\Psi_b$. However, we give an additional nonasymptotic result for $s \in (r,d+r)$ in the following theorem, in the same settings as for Theorem~$\ref{mainthm}$, considering auxiliary probability distributions $\nu$ satisfying~$(\ref{criterenu})$.
\begin{thm}
	\label{mismatch}
	 Let $P$ be such that $\int_{\R^d} \|x\|^rdP(x)<+\infty$. Let $s \in (r,d+r)$.
	 Let $(a_n)$ be an $L^r$-optimal greedy sequence for $P$. For any distribution $\nu$ and Borel function $g_{\varepsilon} : \R^d\rightarrow \R_+$, $\varepsilon \in (0,\frac13)$, satisfying~$(\ref{criterenu})$, for every $n \geq 3$,
	\begin{equation*}
	e_s\big(a^{(n)},P\big) \leq \kappa_{d,r,\varepsilon}^{\text{Greedy}} \left(\int g_{\varepsilon}^{-\frac{s}{d+r-s}}dP \right)^{\frac{d+r-s}{s(d+r)}}\left(\int g_{\varepsilon}^{-\frac{r}{d}} dP \right)^{\frac{1}{d+r}}(n-2)^{-\frac{1}{d}}
	\end{equation*}
	where
	$
	\kappa_{d,r,\varepsilon}^{\text{Greedy}} = 2^{\frac{1}{d}}\,\,\left(\frac rd \right)^{\frac{r}{d(d+r)}}V_d^{-\frac1d} \varphi_r(\varepsilon)^{-\frac1d}.$
\end{thm}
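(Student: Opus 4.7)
My plan is to reduce the $L^s$-quantization error to an integral of $d(x,a^{(n)})^{d+r}$ weighted by $g_\ve$, since the latter is precisely the quantity controlled by the micro-macro inequality that appeared in the proof of Proposition~\ref{mainthm}; the bridge between the two is a single H\"older inequality, and the remaining work is then to average the greedy increments in $r$.

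First, I would apply H\"older with the conjugate exponents $p=\frac{d+r}{s}$ and $q=\frac{d+r}{d+r-s}$ (both well defined since $s\in (r,d+r)$) to the pointwise factorization
\[
d(x,a^{(n)})^s=\bigl[d(x,a^{(n)})^{d+r}g_\ve(x)\bigr]^{s/(d+r)}\cdot g_\ve(x)^{-s/(d+r)},
\]
which yields
\[
e_s(a^{(n)},P)^s\le I_n^{s/(d+r)}\left(\int g_\ve^{-s/(d+r-s)}\,dP\right)^{(d+r-s)/(d+r)},\qquad I_n:=\int g_\ve(x)\,d(x,a^{(n)})^{d+r}\,dP(x).
\]
This isolates the entire dependence on the geometry of $a^{(n)}$ in the single quantity $I_n$, so the remaining task is to prove an estimate of the form $I_n\le C\,(n-2)^{-(d+r)/d}$.

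To bound $I_n$, I would go back to inequality~(\ref{equ1}) in the proof of Proposition~\ref{mainthm}, which, with the choice $c/(c+1)=\ve$, reads $V_d\,\varphi_r(\ve)\,I_k\le e_r(a^{(k)},P)^r-e_r(a^{(k+1)},P)^r$. Since $a^{(k)}\subset a^{(k+1)}$, the sequence $(I_k)_{k\ge 1}$ is nonincreasing; summing from $k=m$ to $k=n-1$ and telescoping therefore gives
\[
(n-m)\,V_d\,\varphi_r(\ve)\,I_n\le e_r(a^{(m)},P)^r.
\]
I would then pick $m=\lceil n/2\rceil$, so that for $n\ge 3$ both $n-m$ and $m-1$ are bounded below by $(n-2)/2$, and feed in the $(m-1)^{-r/d}$ bound on $e_r(a^{(m)},P)^r$ provided by Proposition~\ref{mainthm}. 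Balancing the two factors $(n-m)$ and $(m-1)^{-r/d}$ at $n/2$ is exactly what produces the exponent $-(d+r)/d$ on $(n-2)$.

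The rest is routine. The only place where I expect to have to be careful is the bookkeeping of constants: powers of $V_d$, $\varphi_r(\ve)$ and $r/d$ enter from three different sources (the H\"older split, the micro-macro inequality, and the $L^r$-rate bound of Proposition~\ref{mainthm}), and checking that the powers of $V_d$ and $\varphi_r(\ve)$ all collapse to $-1/d$ while the numerical constant simplifies to $2^{1/d}(r/d)^{r/(d(d+r))}$ is the only truly delicate step in the argument.
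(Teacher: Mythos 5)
Your proposal is correct and takes essentially the same route as the paper: both rest on inequality~(\ref{equ1}), a H\"older inequality with the exponents $\frac{d+r}{s}$ and $\frac{d+r}{d+r-s}$ (the paper phrases it as a reverse H\"older applied to each increment, you apply the standard form once at the end after summing the increments of $I_k$, a harmless commutation of the same two steps), a telescoping sum over roughly $n/2$ indices, and Proposition~\ref{mainthm} applied at level $\lceil n/2\rceil$. The bookkeeping indeed collapses as you predict, since $\frac{1}{d+r}\left(1+\frac rd\right)=\frac1d$, yielding exactly $\kappa_{d,r,\varepsilon}^{\text{Greedy}}=2^{\frac1d}\left(\frac rd\right)^{\frac{r}{d(d+r)}}V_d^{-\frac1d}\varphi_r(\varepsilon)^{-\frac1d}$.
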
 
\begin{proof}
	We assume $\tfrac{1}{g_\ve} \in L^{\tfrac{s}{d+r-s}}(P)$ so that $\tfrac{1}{g_\ve}\in L^{\frac{r}{d}}(P)$ since $\ds \tfrac{s}{d+r-s} \geq \tfrac{s}{d} \geq \tfrac{r}{d}$. Inequality~$(\ref{equ1})$ from the proof of Proposition~$\ref{mainthm}$ still holds, i.e.
	\begin{equation*}
	e_r(a^{(n)},P)^r  - e_r(a^{(n+1)} ,P)^r 
	\geq C \int g_{\varepsilon}(x) d\left(x,a^{(n)}\right)^{d+r} dP(x). 
	\end{equation*}
	with, for every $c\!\in (0, \tfrac{\ve}{1-\ve}]\cap (0,1/2)$, $C=V_d\, \varphi_r\left(\frac{c}{c+1}\right)$ where $\varphi_r(u)=\left(\tfrac {1}{3^r} -u^r\right)u^d$. The reverse H\"older inequality applied with $p=\frac{s}{d+r} \in (0,1)$ and $q =-\frac{s}{d+r-s} \in (-\infty,0)$ yields that
	$$e_r(a^{(n)},P)^r - e_r(a^{(n+1)} ,P)^r \geq C_1 e_s\big(a^{(n)},P\big)^{d+r}$$
	where
	$C_1=C\; \big(\int g_{\varepsilon}^{-\frac{s}{d+r-s}}dP \big)^{-\frac{d+r-s}{s}}.$ Hence, knowing that $k \mapsto e_s\big(a^{(k)},P\big)$ is non-increasing and summing between $n$ and $2n-1$, we obtain for $n\geq 1$
	\begin{align*}
	n\, e_s(a^{(2n-1)},P)^{d+r} \leq \sum_{k=n}^{2n-1} e_s\big(a^{(k)},P\big)^{d+r} \leq \frac{1}{C_1} \sum_{k=n}^{2n-1} e_r(a^{(k)},P)^r - e_r(a^{(k+1)} ,P)^r \leq \frac{1}{C_1} e_r(a^{(n)},P)^r.	
	\end{align*}
	Finally, since $ 2 \left\lceil \frac{n}{2} \right\rceil -1 \leq n$, we have $e_s\big(a^{(n)},P\big)\leq  e_s\left(a^{2 \left\lceil \frac{n}{2} \right\rceil -1},P\right)$ and we derive that 
	$$
	\frac{n}{2} e_s\big(a^{(n)},P\big)^{d+r} \leq \left\lceil \frac{n}{2} \right\rceil e_s\big(a^{(n)},P\big)^{d+r} \leq  \left\lceil \frac{n}{2} \right\rceil e_s\left(a^{2 \left\lceil \frac{n}{2} \right\rceil -1},P\right)^{d+r} \leq \frac{1}{C_1}e_r\left(a^{ \left\lceil \frac{n}{2} \right\rceil },P\right)^{r}.
	$$
	Consequently, plugging in $C_1$,
	\begin{align*}
	e_s\big(a^{(n)},P\big)  &\leq  \left(\frac{2}{C_1}\right)^{\frac{1}{d+r}} n^{-\frac{1}{d+r}} e_r\left(a^{ \left\lceil \frac{n}{2} \right\rceil },P\right)^{\frac{r}{d+r}}\\
	 & =  2^{\frac{1}{d+r}} V_d^{-\frac{1}{d+r}} \varphi_r\left(\frac{c}{c+1}\right)^{-\frac{1}{d+r}} \left(\int g_{\varepsilon}^{-\frac{s}{d+r-s}}dP \right)^{\frac{d+r-s}{s(d+r)}}	n^{-\frac{1}{d+r}} e_r\left(a^{ \left\lceil \frac{n}{2} \right\rceil },P\right)^{\frac{r}{d+r}}.
	\end{align*} 
	Consequently, one can deduce from Proposition~$\ref{mainthm}$, for $n\ge 3$,
	\begin{align*}
	e_s\big(a^{(n)},P\big)  \leq &  2^{\frac{1}{d}} V_d^{-\frac{1}{d}} \left(\frac rd\right)^{\frac{r}{d(d+r)}} \left(\int g_{\varepsilon}^{-\frac{s}{d+r-s}}dP \right)^{\frac{d+r-s}{s(d+r)}}\left(\int g_{\varepsilon}^{-\frac rd}dP \right)^{\frac{1}{d+r}} \varphi_r\left(\frac{c}{c+1}\right)^{-\frac{1}{d}} (n-2)^{-\frac1d}.
	\end{align*} 
	Hence, the result is owed to the fact that $\varphi_r\left(\frac{c}{c+1}\right) \leq \varphi_r\left(\varepsilon\right)$ for $c \in (0,\frac{\varepsilon}{1-\varepsilon}]$.
	\hfill $\square$
\end{proof}
\begin{cor}
	Let $s \in (r,d+r)$. Assume , for $\delta >0$, 
	\begin{equation}
	\label{assumption}
	\int \|x\|^{\frac{ds}{d+r-s}} (\log^+\! \|x\|)^{\frac{s}{d+r-s}+\delta} dP(x) < +\infty
	\end{equation}
	then
	$$
	\lim \sup_n n^{\frac{1}{d}}\sup \big\{e_s\big(a^{(n)},P\big): (a_n) L^r\mbox{-optimal greedy sequence for } P \big\} < +\infty.
	$$
\end{cor}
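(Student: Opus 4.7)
The plan is to specialize Theorem~\ref{mismatch} to the logarithmic-type auxiliary probability distribution already used in the proof of Theorem~\ref{Piercetype}$(b)$, choosing its logarithmic exponent so that the first integrability condition matches the moment assumption~$(\ref{assumption})$ exactly. Set
$$
\beta := 1 + \frac{\delta(d+r-s)}{s} \,>\,1
$$
and consider $\nu(dx)=\gamma_{r,\beta}(x)\lambda_d(dx)$ with
$$
\gamma_{r,\beta}(x)=\frac{K_{r,\beta}}{(1\vee\|x-a_1\|)^{d}\bigl(\log(\|x-a_1\|\vee e)\bigr)^{\beta}},
$$
where $K_{r,\beta}<+\infty$ because $\beta>1$. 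Repeating verbatim the argument of Theorem~\ref{Piercetype}$(b)$, condition~$(\ref{criterenu})$ is satisfied, for every $\varepsilon\in(0,1/3)$, with
$$
g_{\varepsilon}(x)=\frac{C(\varepsilon,\beta)\,K_{r,\beta}}{(1\vee\|x-a_1\|)^{d}\bigl(\log(\|x-a_1\|\vee e)\bigr)^{\beta}},
$$
$C(\varepsilon,\beta)$ being an explicit positive constant.

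The value of $\beta$ is tuned so that $\tfrac{\beta s}{d+r-s}=\tfrac{s}{d+r-s}+\delta$. Hence, up to multiplicative constants depending only on $\varepsilon,\beta$,
$$
\int g_\varepsilon^{-\frac{s}{d+r-s}}\,dP \;\asymp\; \int (1\vee\|x-a_1\|)^{\frac{ds}{d+r-s}}\bigl(\log(\|x-a_1\|\vee e)\bigr)^{\frac{s}{d+r-s}+\delta}\,dP(x).
$$
Replacing $\|x-a_1\|$ by $\|x\|$ via the triangle inequality and bounding $\log(\|x-a_1\|\vee e)\leq \log_+\|x\|+1+\tfrac{\|a_1\|\vee e}{e}$ exactly as in the final lines of the proof of Theorem~\ref{Piercetype}$(b)$ reduces the right-hand side to a finite quantity under assumption~$(\ref{assumption})$, modulo bounded terms depending on $\|a_1\|$. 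For the second integral, $g_\varepsilon^{-r/d}$ is proportional to $(1\vee\|x-a_1\|)^{r}\bigl(\log(\|x-a_1\|\vee e)\bigr)^{\beta r/d}$; since $s>r$ implies $r<\tfrac{ds}{d+r-s}$, the integrand is dominated for $\|x\|$ large by the previous one, so $\int g_\varepsilon^{-r/d}\,dP$ is finite as well.

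Plugging these finite bounds into Theorem~\ref{mismatch} yields, for every $n\ge 3$,
$$
e_s\bigl(a^{(n)},P\bigr)\leq \kappa\,(n-2)^{-1/d},
$$
with a constant $\kappa$ depending on $d,r,s,\delta,\varepsilon$, on the moment in~$(\ref{assumption})$, and on $\|a_1\|$. The only (mild) remaining obstacle is uniformity over all $L^r$-optimal greedy sequences, which differ precisely through their choice of $L^r$-median $a_1$. This is settled by \cite[Lemma 2.2]{GraLu00}, which guarantees that
$$
\sup\bigl\{\|a_1\|:\; a_1\in\mathrm{argmin}_{\xi\in\R^d}\,e_r(\{\xi\},P)\bigr\}<+\infty,
$$
so $\kappa$ can be chosen independent of the particular greedy sequence, and the announced $\limsup$ follows.
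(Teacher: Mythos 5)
Your proposal is correct and follows essentially the same route as the paper: the same choice of logarithmic auxiliary density with exponent $\beta=1+\tfrac{\delta(d+r-s)}{s}$, the same verification of~$(\ref{criterenu})$, the same reduction of both integrals in Theorem~\ref{mismatch} to assumption~$(\ref{assumption})$ (using $r<\tfrac{ds}{d+r-s}$ for the second one), and the same appeal to the uniform boundedness of the $L^r$-medians to get the $\limsup$ over all greedy sequences. No gaps.
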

\begin{proof} The proof is divided in two steps.

\smallskip
\noindent {\sc Step 1:} Let $\delta>0$ be fixed and $\beta=1+\frac{(d+r-s)\delta}{s}$. Just as in the proof of Theorem~\ref{Piercetype}(b), we set $\nu(dx)=\gamma_{r,\delta}(x) \lambda_d(dx)$ where  
	$$\gamma_{r,\delta}(x)=\frac{K_{\delta,r}}{(1 \vee \|x-a_1\|)^{d}\left(\log(\|x-a_1\| \vee e) \right)^{\beta}}, \mbox{ with }   K_{\delta,r}= \left(\int \frac{dx}{(1 \vee \|x-a_1\|)^{d}\left(\log(\|x-a_1\| \vee e) \right)^{\beta}} \right)^{-1},$$
	is a probability density with respect to the Lebesgue measure on $\R^d$. 
	The density $\gamma_{r,\delta}$ is radial non-increasing on the whole $\R^d$ w.r.t. $a_1$ (and $\|\cdot\|_0= \|\cdot\|$) so that $\mathfrak{p}(\|\cdot-a_1\|)=1$ by Remark~\ref{rk:2.9}$(a)$ and, in turn, Lemma~\ref{lem2} yields for every $x\!\in \R^d$ and $t\le \|x-a_1\|$
	\[
	\nu\big( B(x,t)  \big)\ge 2^{-d} V_d \gamma_{r, \delta}((x) t^d.
	\]
	Consequently, Theorem~$\ref{mismatch}$ yields, for $n\ge 3$,
	\begin{align*}
	e_s\big(a^{(n)},P\big) \leq & C_{d,r,\delta} \left( \int \left(1 \vee \|x-a_1\| \right)^{r} \left(\log(\|x-a_1\| \vee e) \right)^{\beta \frac rd}dP(x)\right)^{\frac {1}{d+r}} \\
	& \times \left( \int \left(1 \vee \|x-a_1\| \right)^{\frac{sd}{d+r-s}} \left(\log(\|x-a_1\| \vee e) \right)^{\delta+\frac{ s}{d+r-s}}dP(x)\right)^{\frac {d+r-s}{s(d+r)}} (n-2)^{-\frac 1d}
	\end{align*}
	where 
	$
	C_{d,r,\delta}  \leq 2^{1+\frac 1d} V_d^{-\frac1d} \left( \frac rd \right)^{\frac{r}{d(d+r)}}K_{\delta,r}^{-\frac 1d} \min_{\varepsilon \in (0,\frac 13)} (1+\varepsilon)^d \varepsilon^{\frac{\beta}{d}} \varphi_r(\varepsilon)^{-\frac 1d}.
	$\\
	\textsc{Step 2:}
	Just as in the proof of Theorem~\ref{Piercetype}(b), we have 
	$$\left(1 \vee \|x-a_1\| \right)^{r} \left(\log(\|x-a_1\| \vee e) \right)^{\beta \frac rd}  \leq 2^{(r-1)_++(\beta \frac rd-1)_+} \left( \|x\|^r \log_+\|x\|^{\beta \frac rd}+A_1\|x\|^r+A_2 \right)$$
	and
	\begin{align*}
	\left(1 \vee \|x-a_1\| \right)^{\frac{sd}{d+r-s}} \left(\log(\|x-a_1\| \vee e) \right)^{\delta+\frac{ s}{d+r-s}} \leq &2^{(\frac{ds}{d+r-s}-1)_++(\delta+ \frac{s}{d+r-s}-1)_+} \\
	& \times \left( \|x\|^{\frac{ds}{d+r-s}} \log_+\|x\|^{\delta+\frac{s}{d+r-s}}+B_1\|x\|^r+B_2 \right)
	\end{align*}
	where 
	$A_1,A_2,B_1$ and $B_2$ are constants depending only on $r,d,s,\delta$ and $a_1$. 
	Since, $\frac{s}{d+r-s} \geq \frac rd$, one has $\frac{ds}{d+r-s}>r$ and $\delta+\frac{s}{d+r-s}\geq \beta \frac rd$, so that the two above quantities are finite (by assumption~$(\ref{assumption})$).
	The result is deduced from the fact that $\sup \big\{\|a_1\|: a_1\! \in {\rm argmin}_{\xi\in \R^d} e_r(\{\xi\}, P) \big\} < \infty$. \hfill $\square$
\end{proof}
\section{Algorithmics}
\label{algorithmics}
An important application of quantization is numerical integration. Let us consider the quadratic case $r=2$ and an $L^2$-optimal greedy quantization sequence $a^{(n)}$ for a random variable $X$ with distribution $\P_X=P$. Since we know that $e_2(a^{(n)},X)=\|X-a^{(n)}\|_2$ converges to $0$ when $n$ goes to infinity, this means that $a^{(n)}$ converges towards $X$ in $L^2$ and hence in distribution. So, denoting $\big(W_i(a^{(n)})\big)_{1 \leq i \leq n}$ the Vorono\"i diagram corresponding to $a^{(n)}$, one can approximate $\E[f(X)]$, for every continuous function $f:\R^d \rightarrow \R$, by the following cubature formula
\begin{equation}
\label{quadratureformula}
I(f):=\E[f(X)]\approx \sum_{i=1}^{n}p_i^n \, f(a_i^{(n)})
\end{equation}
where, for every $i \in \{1,\ldots,n\}$, $p_i^n=P\big(X \in W_i(a^{(n)}) \big)$ represents the weight of the $i^{th}$ Vorono\"i cell corresponding to the greedy quantization sequence $a^{(n)}=\{a_1^{(n)},\ldots,a_n^{(n)}\}$. When the function $f$ satisfies certain regularities, one can establish error bounds for this quantization-based cubature formula, we refer to \cite{Pages18} for more details.
For example, if $f$ is $[f]_{\text{Lip}}$-Lipschitz continuous, one has 
$$\left|\sum_{i=1}^{n}p_i^n f(a_i^{(n)})-\E[f(X)]\right| \leq [f]_{\text{Lip}} \; e_r(a^{(n)},X).$$
so one can approximate $\E[f(X)]$ with an $\mathcal{O}(n^{-\frac 1d})$ rate of convergence.

When working on the unit cube $[0, 1]^d$, it is natural to compare an optimal greedy sequence of the uniform distribution $\mathcal{U}([0,1]^d)$ and a uniformly distributed sequence with low discrepancy used in the quasi-Monte Carlo method (QMC). A $[0,1]^d$-valued sequence $\xi=(\xi_n)_{n \geq 1}$ is uniformly distributed if $ \mu_n=\frac1n \sum_{k=1}^n\delta_{\xi_k}$ converges weakly to $\lambda_{d_{ \mid [0,1]^d }}$ (where $\lambda_d$ denotes the Lebesgue measure on $(\R^d,\mathcal{B}(\R^d))$). It is well known (see \cite{KuiNied} for example) that $(\xi_n)_{n \geq 1}$ is uniformly distributed if and only if 
$$D_n^*(\xi)=\sup_{u \in [0,1]^d} \left|\frac{1}{n} \sum_{i=1}^{n} \mathds{1}_{\xi_i \in [0,u]^d} -\lambda_d([0,u]^d) \right| \; \rightarrow \; 0 \quad \mbox{ as } \quad n \rightarrow +\infty.$$
The above modulus is known as the star-discrepancy of $\xi$ at order $n$ and can be defined, for fixed $n \in \N$, for any $n$-tuple $(\xi_1,\ldots,\xi_n)$ whose components $\xi_k$ lie in $[0,1]^d$. There exists many sequences (Halton, Kakutani, Faure, Niederreiter, Sobol', see \cite{BoLe94,Pages18} for example) achieving a $\mathcal{O}\left(\frac{(\log n)^d}{n} \right)$ rate of decay for their star-discrepancy and it is a commonly shared conjecture that this rate is optimal, such sequences are called sequences with low discrepancy.  
By a standard so-called Hammersley argument, one shows that if a $[0,1]^{d-1}$-valued sequence $\zeta=(\zeta_n)_{n \geq 1}$ has low discrepancy i.e. there exists a real constant $C(\zeta) \in (0,+\infty)$ such that 
$D_n^*(\zeta) \leq C(\zeta)\frac{(\log n)^{d-1}}{n}$, for every $n \geq 1$,
then, for every $n \geq 1$, the $[0,1]^d$-valued $n$-tuple $\left((\zeta_k,\frac{k}{n}) \right)_{1\leq k \leq n}$ satisfies 
$$D_n^*\left( \left((\zeta_k,\frac{k}{n}) \right)_{1\leq k \leq n}\right) \leq C(\zeta)\frac{(\log n)^{d-1}}{n}.$$
The QMC method finds its gain in the following error bound for numerical integration. Let $(\xi_1,\ldots,\xi_n)$ be a fixed $n$-tuple in $([0,1]^d)^n$, then, for every $f:[0,1]^d \rightarrow \R$ with finite variation (in the Hardy and Krause sense, see \cite{Nied93} or in the measure sense see \cite{BoLe94, Pages18}), 
\begin{equation}
\label{koksmahlawka}
\left|\frac1n \sum_{i=1}^n f(\xi_k) -\int_{[0,1]^d}f(u)du \right| \leq V(f) D_n^*(\xi_1,\ldots,\xi_n).
\end{equation}
where $V(f)$ denotes the (finite) variation of $f$. So, for this class of functions, an $\mathcal{O}\left(\frac{(\log n)^{d-1}}{n} \right)$ or $\mathcal{O}\left(\frac{(\log n)^{d}}{n} \right)$ rate of convergence can be achieved depending on the composition of the sequence. However, the class of functions with finite variation becomes sparser in the space of functions defined from $[0,1]^d$ to $\R$ and it seems natural to evaluate the performance of the low-discrepancy sequences or $n$-tuples on a more natural space of test functions like the Lipschitz functions. This is the purpose of Pro\"inov's theorem reproduced below.
\begin{thm}(Proinov, see \cite{Proinov88})
	\label{proinov}
	Let $(\R^d,\|.\|_{\infty})$. Let $\xi=(\xi_1, \ldots, \xi_n)$ a sequence of $[0,1]^d$. For every continuous function $f:[0,1]^d\rightarrow (\R,|.|_{\infty})$, we define the uniform continuity modulus of $f$ by 
	$w(f, \delta)=\sup_{\xi, \xi^{'} \in [0,1]^d, |\xi -\xi^{'}|_{\infty} \leq \delta} |f(\xi)-f(\xi^{'})|$
	where $|u|_{\infty}=\max_{1 \leq i \leq d} |u_i|$ if $u=(u_1,\ldots,u_d)$.
	Then, for every $n \geq 1$,
	$$\ds \left|  \frac1n \sum_{i=1}^{n}f(\xi_i) - \int_{[0,1]^d}f(x)dx\right| \leq C_d \, w(f, D_n^*(\xi)^{\frac{1}{d}}),$$
	where $C_d$ is a constant lower than $4$ and depending only on the dimension $d$. \\
	In particular, if $f$ is $[f]_{\text{Lip}}-$Lipschitz and $\xi$ has low discrepancy, one has
	$$\ds \left| \frac1n \sum_{i=1}^{n} f(\xi_i) - \int_{[0,1]^d}f(x)dx\right| \leq C_d \,[f]_{Lip} D_n^*(\xi)^{\frac{1}{d}} \leq C_d \,[f]_{Lip} \frac{\log\, n}{n^{\frac{1}{d}}}.$$
\end{thm}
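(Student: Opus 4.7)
The plan is to combine a piecewise-constant approximation of $f$ on a grid whose mesh is tuned to $D_n^*(\xi)$ with a $d$-fold Koksma--Hlawka-type summation by parts. Set $\delta := D_n^*(\xi)^{1/d}$, which we may assume positive, let $M := \lceil 1/\delta\rceil$ so that $h := 1/M\le\delta$, partition $[0,1]^d$ into the uniform grid $\mathcal{Q}=\{Q_j\}$ of $M^d$ cubes of side $h$, and introduce the step approximant $g := \sum_j f(c_j)\mathds{1}_{Q_j}$, with $c_j$ the center of $Q_j$. Since each $Q_j$ has $\|\cdot\|_\infty$-diameter $h\le \delta$, one has the crucial oscillation bound $\|f-g\|_\infty\le w(f,\delta)$.

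With $\mu_n := \frac{1}{n}\sum_{i=1}^n\delta_{\xi_i}$, the elementary decomposition
\[
\frac 1n\sum_{i=1}^n f(\xi_i)-\int_{[0,1]^d}\!\! f(x)\,dx \;=\; \int(f-g)\,d\mu_n \;+\; \Big(\int g\,d\mu_n-\int g\,d\lambda_d\Big)\;+\;\int(g-f)\,d\lambda_d
\]
bounds the first and third terms by $w(f,\delta)$ outright, so the entire difficulty is to control the middle step-function discrepancy term by a multiple of $w(f,\delta)$. Writing $\Delta_j := \mu_n(Q_j)-\lambda_d(Q_j)$, I would observe that the rectangular partial sums $\sum_{j_1\le k_1,\ldots,j_d\le k_d}\Delta_j$ equal, up to sign, $\mu_n([0,k/M])-\lambda_d([0,k/M])$ and are therefore bounded by $D_n^*(\xi)=\delta^d$. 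Iterating a discrete summation by parts $d$ times then gives
\[
\Big|\int g\,d\mu_n-\int g\,d\lambda_d\Big| \;\le\; D_n^*(\xi)\sum_{j}\big|(\Delta^{(d)}g)_j\big|,
\]
where $(\Delta^{(d)}g)_j$ denotes the $d$-fold mixed alternating sum of $g$ across the $2^d$ corners of the cell block at $j$. Each such mixed increment is controlled by a multiple of $w(f,\delta)$, while the number of contributing cells is at most $M^d\le (1+\delta^{-1})^d$, so the product $M^d\cdot D_n^*(\xi)$ is $O(1)$ and the middle term is bounded by a constant times $w(f,\delta)$. In dimension one the calculation already yields the announced constant: $V_{\text{TV}}(g)\le M\,w(f,\delta)$ and Koksma's inequality give $|\int g\,d\mu_n-\int g\,d\lambda_d|\le M\,w(f,\delta)\,D_n^*(\xi)=(1+\delta)w(f,\delta)\le 2w(f,\delta)$, whence the total bound is $\le 4w(f,\delta)$.

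The main obstacle is obtaining the sharp, dimension-free constant $C_d\le 4$ in higher dimension: the naive count above produces a constant that grows with $d$, through both the $2^d$-term mixed increments and the subadditivity bound $w(f,k\delta)\le k\,w(f,\delta)$. To keep the constant uniform in $d$ I would follow Pro\"inov's telescoping across a nested family of dyadic partitions of $[0,1]^d$ with meshes $2^{-\ell}$, exploiting cancellations between mixed differences at successive scales and summing a geometric-type series; equivalently, one may induct on $d$ via a Fubini-type reduction, peeling off one coordinate at a time so that the one-dimensional estimate propagates without blow-up. The Lipschitz corollary is then immediate, since $w([f]_{\text{Lip}},\delta)\le [f]_{\text{Lip}}\delta$ combined with a low-discrepancy bound $D_n^*(\xi)\le C(\xi)(\log n)^{d-1}/n$ forces the stated rate.
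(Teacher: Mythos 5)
This theorem is quoted in the paper from \cite{Proinov88} without proof, so there is no internal argument to compare yours against; your proposal has to be judged on its own. The skeleton is the standard one and part of it is sound: the three-term decomposition through the step approximant $g$, the bound $\|f-g\|_\infty\le w(f,\delta)$ for the outer terms, and the complete one-dimensional computation via Koksma's inequality giving $C_1\le 4$ are all correct (modulo the usual half-open-cell convention for points on cell boundaries, and the fact that the exact discrete Koksma--Hlawka bound also carries lower-dimensional face variations, which are of the same order).

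The genuine gap is exactly where you flag it, and flagging it is not the same as closing it. For $d\ge 2$ the entire content of Pro\"inov's theorem is the dimension-controlled constant; your $d$-fold Abel summation yields $M^d\,D_n^*(\xi)\,\sup_j|(\Delta^{(d)}g)_j|$ with $M^d D_n^*(\xi)=O(1)$ but $|(\Delta^{(d)}g)_j|\le 2^{d-1}w(f,\delta)$, so the constant you actually obtain grows like $2^{O(d)}$, not $4$. The two remedies you name are not carried out, and the second one is doubtful as stated: a Fubini-type induction peeling off one coordinate requires disintegrating the empirical measure $\mu_n=\frac1n\sum_i\delta_{\xi_i}$ along that coordinate, and for an arbitrary point set this conditional structure is precisely what is \emph{not} available (this is why Koksma--Hlawka itself does not trivially reduce to $d=1$). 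Pro\"inov's actual argument requires a genuinely different construction (nested partitions adapted to the point set with cancellation across scales), and without executing it your proof establishes the inequality only with a constant of order $4^d$. That suffices for the qualitative use the paper makes of the theorem, but it does not prove the statement as written, whose assertion $C_d\le 4$ is the nontrivial part. The final Lipschitz corollary is immediate once the main inequality is granted, as you say.
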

This suggests that, at least for a commonly encountered class of regular functions, the curse of dimensionality is more severe with QMC than with quantization due to the extra $(\log n)^{1-\frac 1d}$ factor in QMC. This is the price paid by QMC for considering uniform weights $p_i=\frac1n, i=1,\ldots,n.$

With greedy quantization sequences, we will show that it is possible to keep the $n^{-\frac1d}$ rate of decay for numerical integration but also keep the asset of a sequence which is a recursive formula for cubatures.
\subsection{Optimization of the algorithm and the numerical integration in the $1$-dimensional case}
Quadratic optimal greedy quantization sequences are obtained by implementing algorithms such as Lloyd's I algorithm, also known as $k$-means algorithm, or the Competitive Learning Vector Quantization (CLVQ) algorithm, which is a stochastic gradient descent algorithm associated to the distortion function. We refer to \cite{LuPa215} (an extended version of \cite{LuPa15} on ArXiv) where greedy variants of these procedures are explained in detail. According to Lloyd's algorithm, the construction of the sequences is recursive in the sense that, at the iteration $n$, we add one point $a_n$ to $\{a_1, \ldots, a_{n-1}\}$, and we denote $\{a_1^{(n)}, \ldots, a_n^{(n)}\}$ an increasing reordering of $\{a_1, \ldots, a_n\}$ where the new added point is denoted by $a_{i_0}^{(n)}$.\\
Since the other points are frozen, we can notice that the local inter-point inertia $\sigma_i^2$  defined by
\begin{equation}
\sigma_i^2:= \ds \int_{a_i^{(n-1)}}^{a_{i+\frac{1}{2}}^{(n-1)}} |a_i^{(n-1)}-\xi|^2 P(d\xi) \, + \,  \int_{a_{i+\frac{1}{2}}^{(n-1)}}^{a_{i+1}^{(n-1)}} |a_{i+1}^{(n)-1}-\xi|^2 P(d\xi), \quad i=0,\ldots, n-1
\end{equation}
(where
$a_0^{(n-1)} =-\infty$,  
$a_{n}^{(n-1)}=+\infty$ and 
$  a_{i+\frac{1}{2}} ^{(n-1)}= \frac{a_i^{(n-1)}+ a_{i+1}^{(n-1)}}{2}$ with $a_{\frac{1}{2}}^{(n-1)}=-\infty ,  \quad a_{n-\frac{1}{2}}^{(n-1)} = +\infty$)
remains untouched for every $ i \in  \{0, \ldots, n-1\}$ except $\sigma_{i_0}^2$ (the inertia between the point $a_{i_0}^{(n)}$ added at the $n$-th iteration and the following point) and $\sigma_{i_0-1}^2$ (the inertia between  $a_{i_0}^{(n)}$  and the preceding point). 
Thus, at each iteration, the computation of $n$ inertia can be reduced to the computation of only $2$, thereby reducing the cost of the procedure. Likewise, the weights $p_i^n=P(W_i(a^{(n)}))$ of the Vorono\"i cells remain mostly unaffected. The only cells that change from one step to another are the cell $W_{i_0}(a^{(n)})$ having for centroid the new point $a_{i_0}^{(n)}$ and the two neighboring cells $W_{i_0-1}(a^{(n)})$ and $W_{i_0+1}(a^{(n)})$. Thus, the online computation of cell weights just needs $3$ calculations instead of $n$ (or $2$ in case the added point is the first or last point in the reordered sequence).
The utility of the weights of the Vorono\"i cells is featured in the numerical integration allowing to approximate $\E[f(X)]$ for $f:\R^d \rightarrow \R$ by the quadrature formula~$(\ref{quadratureformula})$ using the reordered sequence $a^{(n)}$. Thus, based on the fact that only $3$ Vorono\"i cells are modified at each iteration, one can deduce an iterative formula for the approximation of $I(f)$ by $I_n(f)$, requiring the storage of only $2$ weights and $2$ indices, as follows
\begin{eqnarray}
\label{caliteratifd1}
\ds I_{n}(f) &=& I_{n-1}(f) - p_{-}^{n}f(a_{i_0-1}^{(n)})-p_{+}^{n}f(a_{i_0+1}^{(n)})+(p_{+}^{n}+p_{-}^{n})f(a_{i_0}^{(n)}) \nonumber \\
&=& I_{n-1}(f)- p_{-}^{n}(f(a_{i_0-1}^{(n)})-f(a_{i_0}^{(n)}))-p_{+}^{n}(f(a_{i_0+1}^{(n)})-f(a_{i_0}^{(n)})),
\end{eqnarray}
where
\begin{enumerate}
	\item[$\bullet$] $\ds a_{i_0}^{(n)}$ is the point added to the greedy sequence at the $n$-th iteration, in other words, it is the point $a_{n}$, 
	\item [$\bullet$] $\ds a_{i_0-1}^{(n)}$ and $a_{i_0+1}^{(n)}$ are the points  lower and greater than $ a_{i_0}^{(n)}$, i.e. $ a_{i_0-1}^{(n)}<a_{i_0}^{(n)}<  a_{i_0+1}^{(n)} $,
	\item[$\bullet$] \begin{equation}
	\label{poidsinfetsup}
	p_{-}^{n} = P\big(\big[a_{i_0-\frac{1}{2}}^{(n)}, a_{\text{mil}}^{(n)}\big]\big) \qquad \mbox{and} \qquad p_{+}^{n} = P\big(\big[a_{\text{mil}}^{(n)},a_{i_0+\frac{1}{2}}^{(n)}\big]\big).
	\end{equation}
	where 
	$\ds a_{i_0 \pm \frac{1}{2}}^{(n)}=\frac{a_{i_0}^{(n)}+a_{i_0 \pm 1}^{(n)}}{2}$  and  $\ds a_{\text{mil}}^{(n)}=\frac{a_{i_0 + 1}^{(n)}+a_{i_0 - 1}^{(n)}}{2}$, with $a_0=-\infty$ and $a_{n}=+\infty$.
\end{enumerate}
Practically, this numerical iterative method can be applied without storing the whole ordered greedy quantization sequence nor computing the weights of the Vorono\"i cells, which could appear as significant drawbacks for quantization. Instead, it requires the possession of $2$ indices of $2$ particular points of the non-ordered greedy quantization sequence and $2$ weights. In fact, one can start by determining the indices $\overline{n}$ and $\underline{n}$ of the points preceding and following $a_{n}$ in the ordered sequence, in other words, the points in the non-ordered sequence corresponding to $a_{i_0-1}^{(n)}$ and $a_{i_0+1}^{(n)}$. Then, we will be able to compute the weights $p_{-}^{n}$ et $p_{+}^{n}$ to finally proceed with the iterative approximation of $I(f)$ according to~$(\ref{caliteratifd1})$.
\subsection{Product greedy quantization ($d>1$)}
\label{dimensiond}
In higher dimensions, greedy quantization has always the recursive properties, so it gets interesting to apply the same numerical improvements as in the one-dimensional case. However, the construction of multidimensional greedy quantization sequences is complex and expensive since it relies on complicated stochastic optimization algorithms. As an alternative, one can use one-dimensional greedy quantization grids as tools to obtain multidimensional greedy quantization sequences in some cases.
\subsubsection{How to build multi-dimensional greedy product grids}
Multidimensional greedy quantization sequences can be obtained as a result of the tensor product of one-dimensional sequences, when the target law is a tensor product of its independent marginal laws. These grids are, of course, not optimal nor asymptotically optimal but they allow to approach the multidimensional law.\\
Let $X_1,\ldots, X_d$ be  $d$ independent $L^2$-random variables taking values in $\R$ with respective distributions $\mu_1,\ldots, \mu_d$ and $a^{1,(n_1)}, \ldots, a^{d,(n_d)}$ the corresponding greedy quantization sequences. By computing the tensor product of the $d$ one-dimensional greedy sequences of the laws $\mu_1,\ldots,\mu_d$, we obtain the $d$-dimensional greedy quantization grid $a^{1,(n_1)} \otimes \ldots \otimes a^{d,(n_d)}$ of the product law $\mu=\mu_1 \otimes \ldots \otimes \mu_d$, given by $\big(a_{\underline{j}}^{(n)}\big)_{1 \leq \underline{j}\leq n}=\big(a_{j_1}^{1,(n_1)}, \ldots, a_{j_d}^{d,(n_d)}\big)_{1 \leq j_1 \leq n_1, \ldots, 1\leq j_d \leq n_d}$ of size $\ds n=\prod_{i=1}^{d}n_i$. The corresponding quantization error is given by 
\begin{equation}
\label{proderror}
e_r(a^{1,(n_1)} \otimes \ldots \otimes a^{d,(n_d)}, X_1 \otimes \ldots \otimes X_d)^r = \sum_{k=1}^{d}e_r(a^{k,(n_k)},X_k)^r.
\end{equation}
Moreover, the weights $p_{\underline{j}}^{(n)}$ of the $d$-dimensional Vorono\"i cells $\big(W_{\underline{j}}\big(a^{(n)}\big)\big)_{1 \leq \underline{j} \leq n}$ can be computed from the weights  $p^{k,n_k} ,\, k=1,\ldots,d$, of the Vorono\"i cells $\big(W^{k,n_k}_{i}\big(a^{k,(n_k)}\big)\big)_{1 \leq i \leq n_k}$ of each one-dimensional greedy quantization sequence, via
$$\ds p_{\underline{j}}=p^{1,n_1}_{j_1} \times \ldots \times p_{j_d}^{d,n_d} \qquad \forall j_k \in \{1, \ldots,n_k\}, \forall k \in \{1,\ldots,d\}, \, \forall \underline{j} \in \{1,\ldots,n\}.$$
The implementation of $d$-dimensional grids is not a point-by-point implementation. In fact, at each iteration $n$, $a^{(n)}$ is obtained from $a^{1,(n_1)},\ldots, a^{d,(n_d)}$, keeping in mind that $n_1 \times \ldots \times n_d =n$. Having the $d$ one-dimensional sequences, one must add a point to one one-dimensional sequence, generating this way several points of the multidimensional sequence. At this step, we must choose between $d$ possibilities: adding one point to only one sequence $a^{k,(n_k)}$ among the $d$ marginal sequences, obtaining $\ds a^{(n_1\times\ldots\times n_{k-1}\times(n_k+1)\times n_{k+1} \times \ldots  \times n_d)}$. These $d$ cases are not similar since each one produces a different error quantization. So, the implementation is not a random procedure. To make the right decision, one must compute in each case, using~$(\ref{proderror})$, the quantization error $E_k$ obtained if we add a point to $a^{k,(n_k)}$ for a $k \in \{1,\ldots,d\}$. In other words, we compute, for $k=1,\ldots,d$
$$
E_k  = e_r(a^{k,(n_k+1)},\mu_k)^r+\sum_{l\in\{1,\ldots, d\}\setminus \{k\}}e_r(a^{l,(n_l)},\mu_l)^r.
$$
Then, one choses the index $i$ such that $\ds E_{i}=\min_{1\leq k \leq d}E_k$ and, so, one adds a point to the sequence $a^{i,(n_{i})}$ and obtains the grid $\ds a^{(n_1\times\ldots\times n_{i-1}\times(n_{i}+1)\times n_{i+1} \times \ldots  \times n_d)}$.\\
We note that if the marginal laws $\mu_1,\ldots ,\mu_d$ are identical, this step is not necessary and the choice of the sequence to which a point is added, at each iteration, is systematically done in a periodic manner.
\subsubsection{Numerical integration}
Similarly to the $1$-dimensional case, the majority of the Vorono\"i cells do not change while passing from an iteration $n$ to an iteration $n+1$. 
At the $n$-th iteration, having $n_1 \times \ldots \times n_d$ points in the sequence, one adds a new point to $a^{(i,n_i)}$. Hence, we will have $n_1\times\ldots\times n_{i-1}\times n_{i+1}\times \ldots \times n_d$ new created cells having for centroids the new points added to the $d$-dimensional sequence $a^{(n)}$, and another $2(n_1\times\ldots\times n_{i-1}\times n_{i+1}\times \ldots \times n_d)$ modified cells, corresponding to all the neighboring cells of the new added cells. In total, there is $3(n_1\times\ldots\times n_{i-1}\times n_{i+1} \times \ldots \times n_d)$ new Vorono\"i cells, while the rest of the cells remain unchanged. 
This leads to an iterative formula for quantization-based numerical integration (where the same principle as in the one dimensional case is applied) 
as follows
\begin{align}
\label{calculiteratifd}
\ds I_{n+1}(f) & =I_n(f)-p_{-}^{i,n_i+1}\left(\sum_{\substack{j_k=1 \\ k\in \{1,\ldots,d\} \setminus\{i\}}}^{n_k} \prod_{\substack{k=1,\ldots,d \\ k\neq i}}p_{j_k}^{k,(n_k)} \left( f(a_{j_1}^{1,(n_1)},\ldots,a_{i_0-1}^{i,(n_i+1)},\ldots,a_{j_d}^{d,(n_d)}) \right. \right. \nonumber \\
& \left. \left. -f(a_{j_1}^{1,(n_1)},\ldots,a_{i_0}^{i,(n_i+1)},\ldots, a_{j_d}^{d,(n_d)}) \right) \vphantom{(\sum_{\substack{j_k=1 \\ k\in \{1,\ldots,d\} \setminus\{i\}}}^{n_k}} \right) \nonumber \\
& -p_{+}^{i,n_i+1}\left( \sum_{\substack{j_k=1 \\ k\in \{1,\ldots,d\} \setminus\{i\}}}^{n_k } \prod_{\substack{k=1,\ldots,d \\ k\neq i}}p_{j_k}^{k,(n_k)} \left( f(a_{j_1}^{1,(n_1)},\ldots,a_{i_0+1}^{i,(n_i+1)},\ldots,a_{j_d}^{d,(n_d)}) \right. \right.  \nonumber \\
& \left. \left. -f(a_{j_1}^{1,(n_1)},\ldots,a_{i_0}^{i,(n_i+1)},\ldots, a_{j_d}^{d,(n_d)}) \right) \vphantom{(\sum_{\substack{j_k=1 \\ k\in \{1,\ldots,d\} \setminus\{i\}}}^{n_k}}\right) 
\end{align}
Note that in the $d$-dimensional case, the use of the weights $p^{k,(n_k)}$ for $k \in \{1,\ldots,d\} \setminus \{i\}$ of the Vorono\"i cells of the other marginal sequences obtained at the previous iteration is essential, as well as the use the ordered one-dimensional greedy sequences $a^{k,(n_k)}$ for $k \in \{1,\ldots,d\} \setminus \{i\}$.
\section{Numerical applications and examples}
\label{numericalexample}
\subsection{Greedy quantization sequences for Gaussian distribution via Box-M\"uller}
\label{boxmuller}
The Box-M\"uller method allows to generate a random vector with normal distribution $\mathcal{N}(0,I_2)$, actually two independent one-dimensional random variables $Z_1$ and $Z_2$ with distribution $\mathcal{N}(0,1)$ by considering two independent random variables $E$ and $U$ with respective distributions $\mathcal{E}(1)$ and $\mathcal{U}([0,1])$. Then, $2E \sim \mathcal{E}(\frac{1}{2})$ and $2 \pi U \sim \mathcal{U}([0,2\pi])$, so, the two variables
$$Z_1=\sqrt{2E}\, \cos (2\pi U)\qquad \mbox{et} \qquad Z_2=\sqrt{2E}\, \sin(2 \pi U)$$
are independent and with normal distribution $\mathcal{N}(0,1)$. \\
In order to apply greedy properties, we use greedy quantization sequences $\varepsilon^{(n_1)}$ and $u^{(n_2)}$ of respective distributions $\mathcal{E}(1)$ and $\mathcal{U}[0,1]$ to design two $\mathcal{N}(0,1)$-distributed independent sequences $z_1^{(n)}$ et $z_2^{(n)}$, of size $n=n_1 \times n_2$, via the previous formulas so we can get a greedy sequence $z^{(n)}$ of the two-dimensional normal distribution $\mathcal{N}(0,I_2)$. 
The procedure is implemented as described in section~$\ref{dimensiond}$. At each iteration, we must choose the one-dimensional distribution to which we should add a point. Thus, we compute the error induced if we add a point to $u^{(n_2)}$
$$E_u = e_2\left(u^{(n_2+1)},\mathcal{U}[0,2\pi]\right)^2+e_2\left(\varepsilon^{(n_1)},\mathcal{E}\left(\frac{1}{2}\right)\right)^2
= 4\pi^2 e_2\left(u^{(n_2+1)},\mathcal{U}[0,1]\right)^2+4e_2\left(\varepsilon^{(n_1)},\mathcal{E}\left(1\right)\right)^2,$$
and the error induces if we add a point to $\varepsilon^{(n_1)}$
$$E_{\varepsilon}  =e_2\left(u^{(n_2)},\mathcal{U}[0,2\pi]\right)^2+e_2\left(\varepsilon^{(n_1+1)},\mathcal{E}\left(\frac{1}{2}\right)\right)^2=  4\pi^2e_2\left(u^{(n_2)},\mathcal{U}[0,1]\right)^2+4e_2\left(\varepsilon^{(n_1+1)},\mathcal{E}\left(1\right)\right)^2,$$
and we add a point to $u^{(n_2)}$ if $E_u<E_{\varepsilon}$ and a point to $\varepsilon^{(n_1)}$ if $E_{\varepsilon}<E_u$. \\
 
To design sequences in dimension $d >2$, one uses several couples $(E_i,U_i)$ to get several pairs $(Z_i,Z_j)$ and use the wanted number of $(Z_k)_{k}$ to obtain multidimensional sequences. 
In figure~$\ref{BM3}$, we compare two greedy quantization sequences of the distribution $\mathcal{N}(0,I_3)$ of size $N=15^3$, one is obtained using the Box-M\"uller method based on two greedy exponential sequences $\mathcal{E}(1)$ and two greedy uniform sequences $\mathcal{U}([0,1])$, and the other obtained by greedy product quantization based on $3$ one-dimensional Gaussian greedy sequences. The weights of the Vorono\"i cells in both cases are represented by a color scale (growing from blue to red) and we observe that the weights of the cells in the center have the highest values and those values decrease as long as we sweep away to the borders, as expected for a Gaussian distribution.\\
We should also note that, even if the greedy product quantization of a Normal distribution takes the shape of a cube (which is unusual fo such distribution), the low values of the Vorono\"i weights at the edges of this cube allow to consider such a sequence as a valid approximation of the Gaussian distribution. 
\begin{figure}
	\begin{center}
		\begin{tabular}{cc}
			\includegraphics[width=.4\textwidth,height=.25\textheight,angle=0]{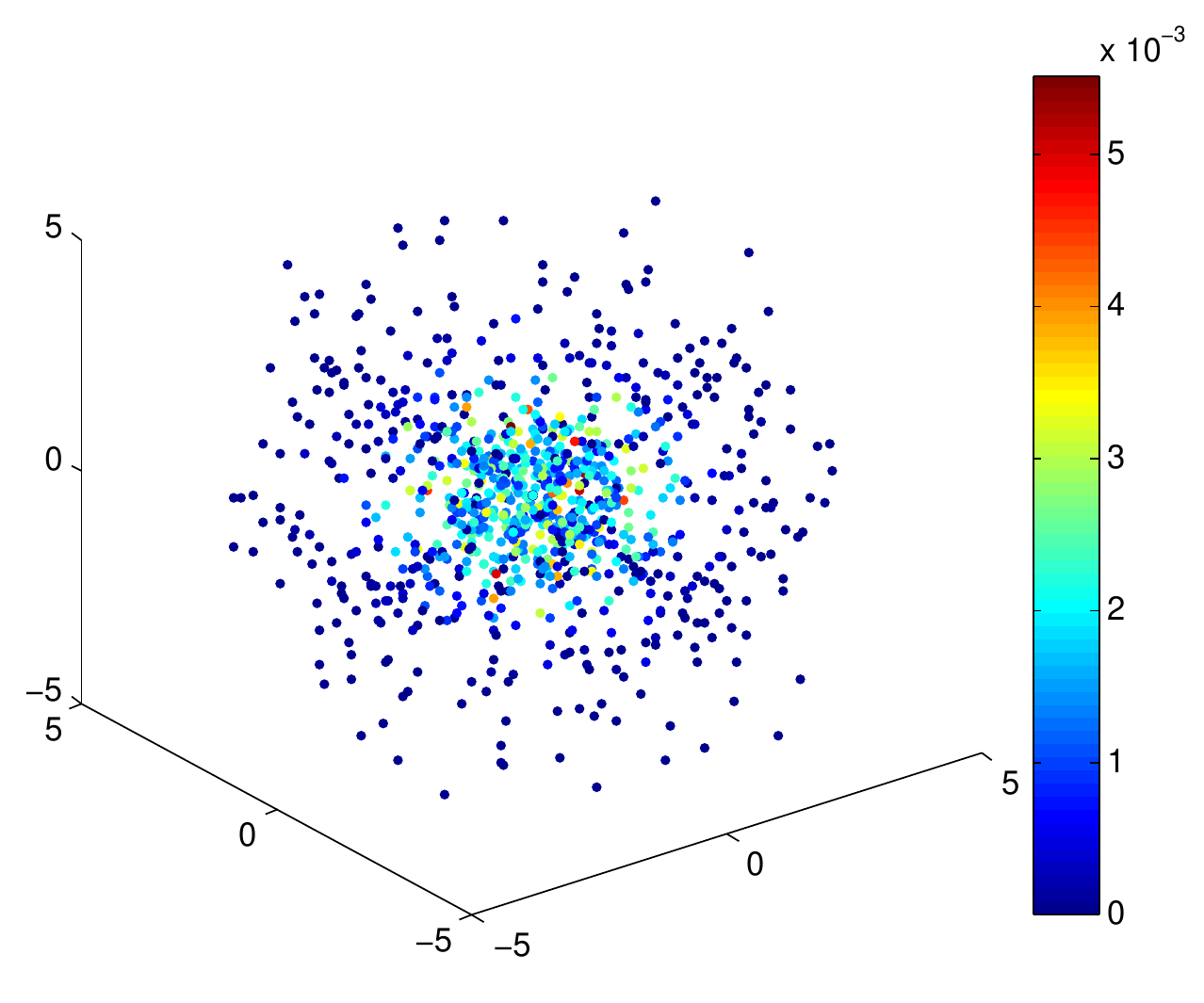}&
			\hspace{1cm}
			\includegraphics[width=.4\textwidth,height=.25\textheight,angle=0]{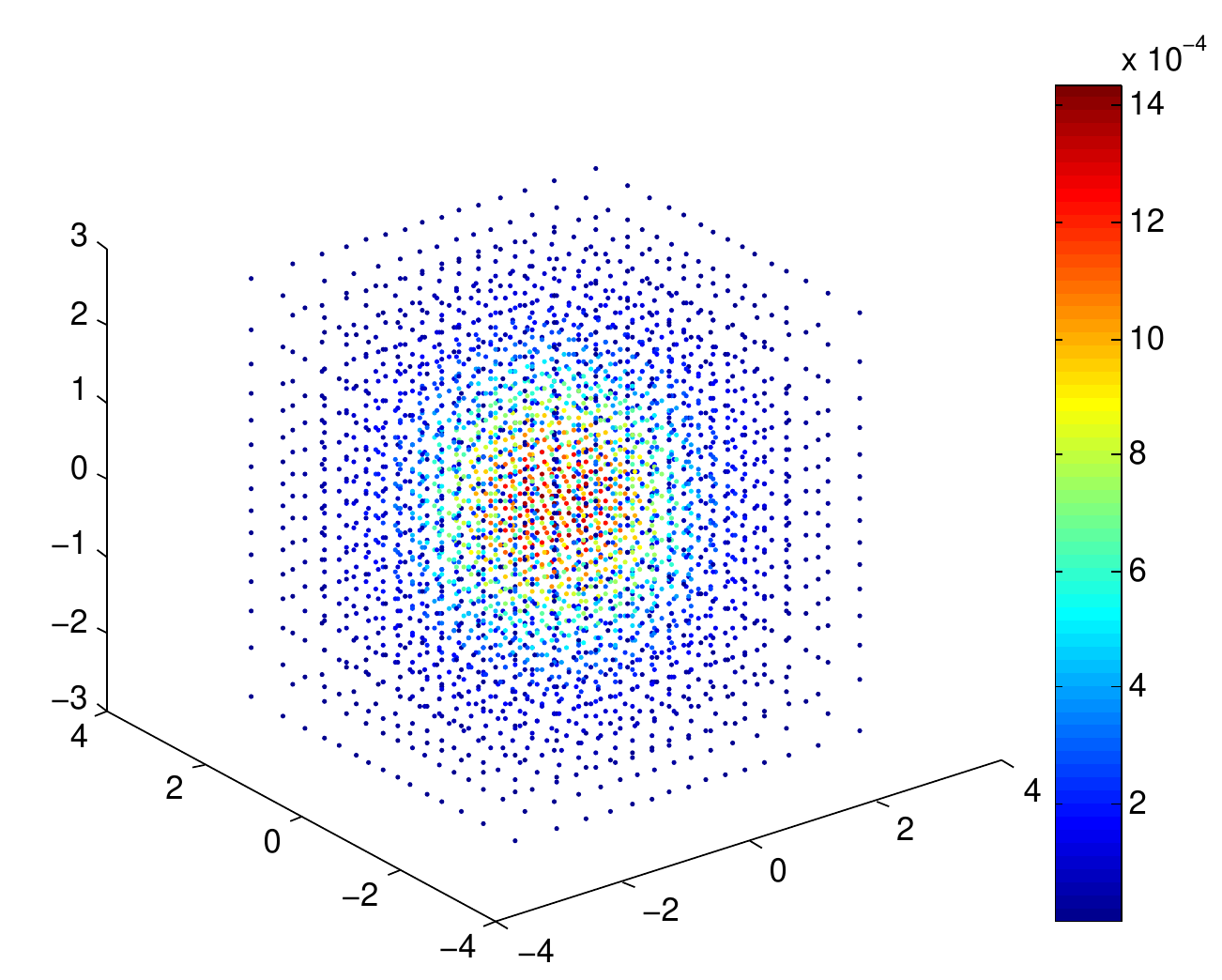}\\
		\end{tabular}
	\end{center} 
	\caption{Greedy quantization sequences of the distribution $\mathcal{N}(0,I_3)$ of size $N=15^3$ designed by Box M\"uller method (left) and greedy product quantization (right).}
	\label{BM3}
\end{figure}

\subsection{Pricing of a $3$-dimensional basket of European call options }
We consider a Call option on a basket of $3$ positive risky assets, with strike price $K$ and maturity $T$, with payoff
$h_T=\left( \sum_{i=1}^{3} w_i X_i -K \right)_+$
where $(X_1,X_2 ,X_3)$ represent the prices of the $3$ traded assets of the market and $w_i$ are positive weights such that $ \sum_{i=1}^{3}w_i=1$.
We consider a $3$-dimensional correlated Black-Scholes model where the prices of the assets are given by
$$dX_i=X_{i,t} \left(rdt +\sigma_idW_{i,t}\right), \qquad X_{i,0}=x_{i,0}.$$
where $r$ is the interest rate, $\sigma_i$ the volatility of $X_i$ and the $(W_i)_i$ represent a correlated $3$-dimensional Brownian motion, i.e. $(W_i,W_j)=\rho_{ij}t.$
Then, one has for every $i\in \{1,2,3\}$
$$\ds X_i=X_{0,i} e^{(r-\frac{\sigma_i^2}{2})t+\sum_{j=1}^{q}\sigma_{ij} W_{j,t}}, \quad t \in [0,T].$$
\begin{figure}
	\begin{center}
		\includegraphics[width=.7\textwidth,height=.3\textheight,angle=0]{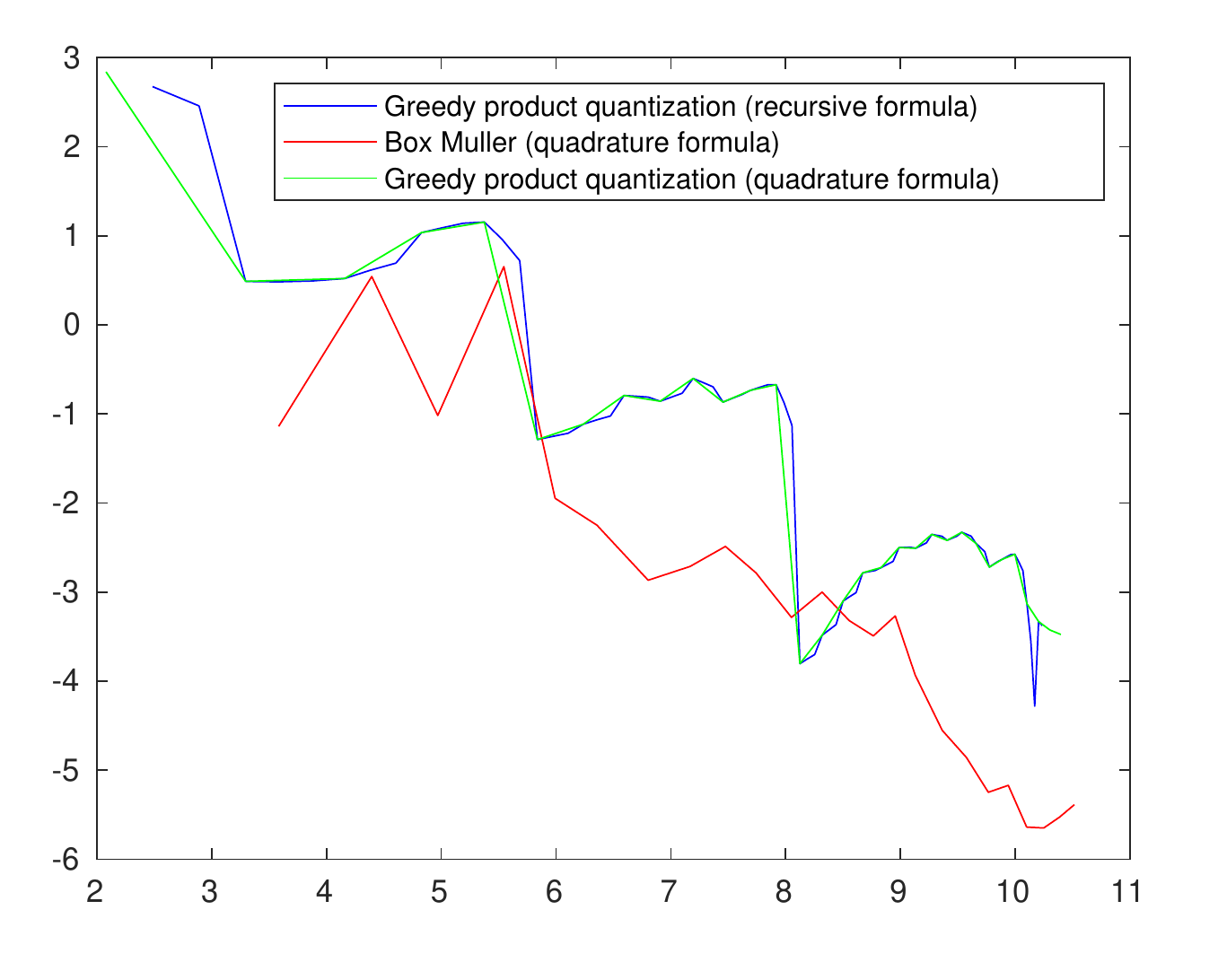}
	\end{center} 
	\vspace{-1cm}
	\caption{Errors induced by the pricing of a $3$-dimensional basket of call options $V_0$ in a Black-Scholes model computed using $3$-dimensional greedy normal quantization sequences obtained by Box-M\"uller via quadrature formula and by greedy product quantization via a recursive formula and via a quadrature formula for $n=1, \ldots, 30\, 000$ (logarithmic scale).}
	\label{basketdim3}
\end{figure}
Our aim is to compute $$V_0=e^{-rT} \E[h_T(X_T)]=e^{-rT} \E[h_T(X_1,X_2,X_3)]$$ relying on the greedy quantization sequences and using the recursive formula for numerical integration introduced in the previous sections. \\
First, we estimate $V_0$ by a quadrature formula according to~$(\ref{quadratureformula})$
\begin{equation*}
\ds V_0 =  \sum_{j_1=1}^{n} \sum_{j_2=1}^{n} \sum_{j_3=1}^{n} \phi(z_{j_1}^{1,(n)},z_{j_2}^{2,(n)},z_{j_3}^{3,(n)}) p_{j_1}^{1,(n)} p_{j_2}^{2,(n)} p_{j_3}^{3,(n)}.
\end{equation*}
where $(z^{1,(n)},z^{2,(n)},z^{3,(n)})$ is a $3$-dimensional greedy quantization sequences of the Gaussian distribution $\mathcal{N}(0,I_3)$ obtained, on one hand, by the Box-M\"uller algorithm relying on $2$ one-dimensional exponential greedy sequence $\varepsilon^{(n_1)}, \, n_1=16$ and $2$ one-dimensional uniform greedy sequence $u^{(n_2)}, \, n_2=15$ and, on the other hand, by greedy product quantization of $3$ one-dimensional sequences of size $32$ each. Then, we estimate $V_0$ by the recursive formula~$(\ref{calculiteratifd})$ for $d=3$ using the greedy product quantization sequence. 
%
We obtain sequences of size $32\, 000$ and we consider 
$$r=0.1 \; , \; \sigma_i=0.3 \; ,\; X_{i,0}=100 \; ,\; T=1 \; \mbox{and} \; K=100.$$
Moreover, we consider a Brownian motion such that $\rho_{1,1}=\rho_{1,2}=\rho_{1,3}=0.5$ and all the others $\rho_{i,j}$'s are equal to $0$. The reference price is given by a large Monte Carlo simulation with control variate of size $M=2.10^7$. We consider the control variate 
$$k_T=\left(e^{\sum_{i=1}^3 w_i\log(X_i)}-K\right)_+$$
which is positive and lower than $h_T$ owing to the convexity of the exponential. Since $e^{-rT}\E k_t$ has a normal distribution with mean $(r-\frac12\sum_{i=1}^3 w_i \sigma_i^2)T$ and variance $w^t \sigma \sigma^t w T$, it admits a closed form given by 
\begin{equation*}
e^{-rT}\E k_t=\mbox{Call}_{\mbox{BS}}\left(\prod_{i=1}^3 X_{i,0}^{w_i} e^{-\frac12 T (\sum_{i=1}^3 w_i \sigma_i^2-w^t \sigma \sigma^t w)}, K,r,\sqrt{w^t \sigma \sigma^t w},T \right).
\end{equation*}
\begin{table}
	\begin{center}
		\begin{tabular}{|l|l|l|l|}
			\hline
			$n$         &  BM    &    GPQ     &     GPI  \\      
			\hline
			$100$   & $1.72$  & $1.68$   &  $1.84$          \\
			\hline
			$1\, 000$     & $0.07$  & $0.42$ & $0.42$       \\
			\hline
			$8\, 000$  &  $0.04$  &  $0.08$  &  $0.08$       \\
			\hline 
			$15\, 000$ & $0.07$ & $0.08$ & $0.08$\\
			\hline
		\end{tabular}
	\end{center}
	\caption{Errors of the approximation of a $3$-dimensional basket of call options $V_0$ in a Black-Scholes model by Box-M\"uller with quadrature formula (BM), greedy product quantization with quadrature formula (GPQ) and greedy product quantization with recursive formula (GPI) for different number of points $n$.} 
	\label{tabbasketdim3}
\end{table}
We compare the three methods in figure~$\ref{basketdim3}$ where we represent, in a logarithmic scale, the error induced by each method as a function of the number of points varying between $1$ and $32\,000$ and in table~\ref{tabbasketdim3} where we expose the errors obtained by each method for some particular number of points.
The recursive numerical integration gives the same results as the quadrature formula-based numerical integration making quantization-based numerical integration less expensive and more advantageous by reducing the cost in time and storage. Moreover, one deduces that the Box-M\"uller algorithm is more accurate than the greedy product quantization. This can be explained by the fact that Box-M\"uller sequences fill the space in a way that resembles more to the normal distribution, we can notice a kind of ball different than the cube observed when implementing greedy product sequences (see figure~\ref{BM3}).
\section{Further properties and numerical remarks}
\label{numericalproperties}
In this section, we present, based on numerical experiments, some properties of the one-dimensional quadratic greedy quantization sequences. 
We recall that $a^{(n)}=\{a_1^{(n)},\ldots,a_n^{(n)}\}$ denotes the reordered greedy sequence of the $n$ first elements $\{a_1,\ldots,a_n\}$ of $(a_n)_{n \geq 1}$.
\subsection{Sub-optimality of greedy quantization sequences}
The implementation of a greedy quantization sequence $(a_n)_{n\geq 1}$ of a distribution $P$ and the computation of the corresponding weights $p_i^{n}$ of the Vorono\"i cells $W_i(a^{(n)})$ for $i \in \{1,\ldots, n\}$ defined by~$(\ref{Voronoicells})$ is, in general, not optimal. However, numerical implementations and graphs representing 
$a_i \mapsto p_i^n=P(X \in W_i(a^{(n)}))$
for different number of points $n$ show that, for certain distributions, the weights of the Vorono\"i cells converge towards the density curve of the corresponding distribution when the greedy sequence has a certain number of points.\\
For the normal distribution, 
this is observed when the size of the sequence is equal to $n=2^k-1$, for every integer $k \geq 1$. So, we can say that the greedy quantization sequence is sub-optimal since the subsequence 
\begin{equation}
\label{suiteoptnorm}
\ds \alpha^{(n)}= \alpha^{(2^k-1)} \, \mbox{t.q.} \quad  n=2^k-1,\, k \in \N^*
\end{equation}
is itself optimal. 
Regarding the uniform distribution on $[0,1]$, we can check that there exists $2$ sub-optimal sequences of the greedy sequence defined by
\begin{multicols}{2}\noindent
\[
\label{suiteopt1unif}
\left\{
\begin{array}{lr}
\alpha_0=3, \\
\alpha_n=2\alpha_{n-1}+1  & \mbox{if } n \equiv 1 \pmod 3,\\
\alpha_n=2(\alpha_{n-1}-2)+1 &  \mbox{if }n \equiv 2 \pmod 3,\\
\alpha_n=2(\alpha_{n-1}+2)+1  & \mbox{if } n \equiv 0 \pmod 3.
\end{array}
\right.
\]
\[
\label{suiteopt2unif}
\left\{
\begin{array}{lr}
\alpha_0=11, \\
\alpha_n=2\alpha_{n-1}+1  & \mbox{if }n \equiv 1 \pmod 3,\\
\alpha_n=2(\alpha_{n-1}-2)+1  & \mbox{if } n \equiv 2 \pmod 3,\\
\alpha_n=2(\alpha_{n-1}+2)+1  & \mbox{if }n \equiv 0 \pmod 3.
\end{array}
\right.
\]
\end{multicols}
These results explain, in a certain way, the cycloid aspect of the graphs of the quantization error, the points at which the quantization error reaches its minimum correspond to the optimal subsequences defined above.

Some results for the normal distribution are represented in figure~$\ref{weightssuboptimal}$ where we observe the unimodal weights for $n=255=2^8-1$ 
and non-unimodal weights for $n=400$. 
\begin{figure}
	\begin{center}
		\vspace{-2cm}
		\begin{tabular}{cc}
			\includegraphics[width=.4\textwidth,height=.4\textheight,angle=0]{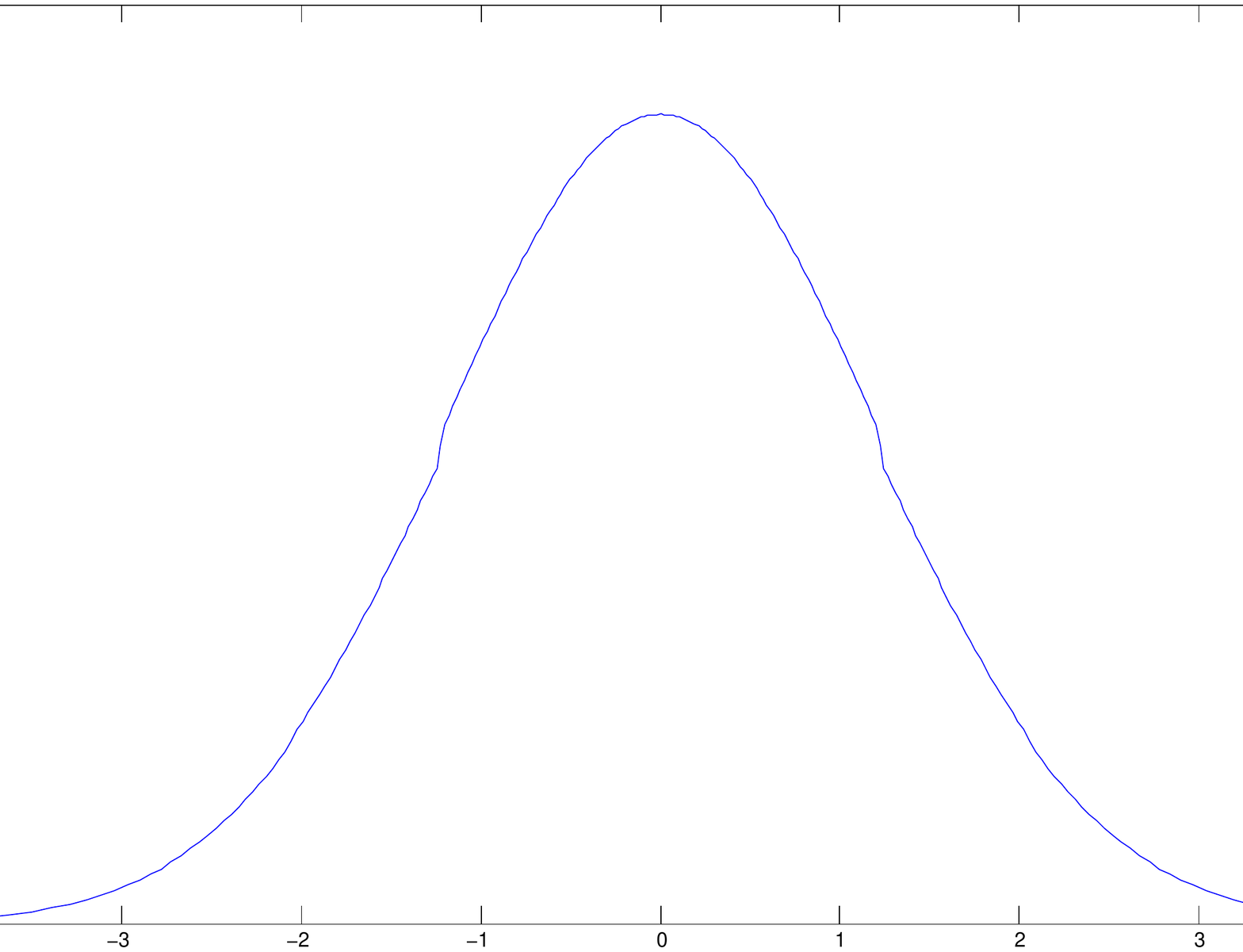}&
			\hspace{0.5cm}
			\includegraphics[width=.4\textwidth,height=.4\textheight,angle=0]{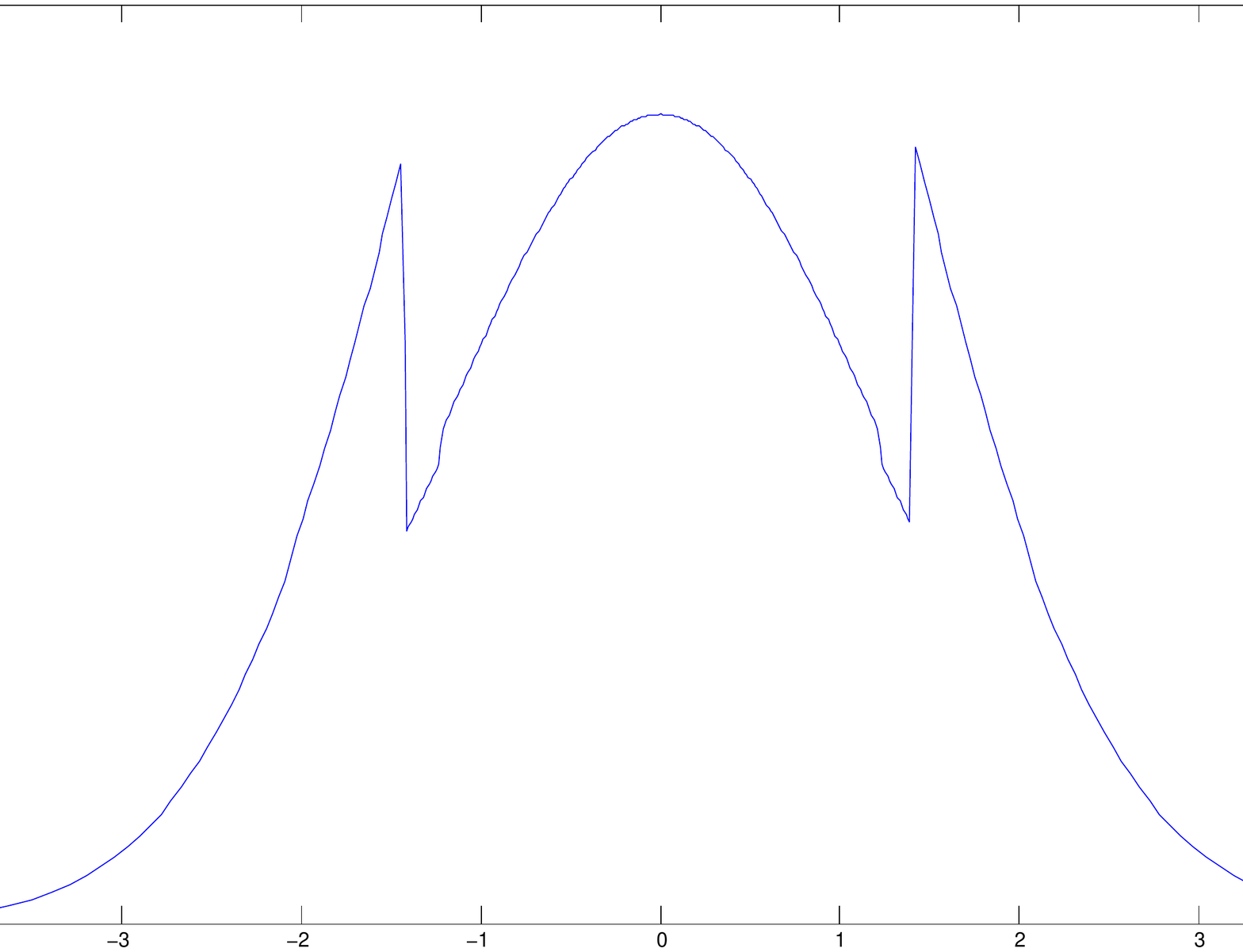}\\

		\end{tabular}
	\end{center} 
	\vspace{-2.5cm}
	\caption{Representation of $a_i \mapsto p_i^n$ where $(p_i^n)_{1 \leq i \leq n}$ denote the Vorono\"i weights corresponding to the greedy quantization sequence of the normal distribution $\mathcal{N}(0,1)$ implemented by Lloyd's algorithm for $n=255$ (left), $n=400$ (right).}
	\label{weightssuboptimal}
\end{figure}

Similarly, the greedy quantization sequence of the Laplace distribution with parameters $0$ and $1$ admits optimal subsequences taking the form $a^{(2^k-1)}, k \in \N^*$. These observations allow to conjecture the sub-optimality of such subsequences for symmetrical distributions around $0$.
\subsection{Convergence of standard and weighted empirical measures}
The existence of suboptimal greedy quantization sequences, as detailed previously, gives a motivation to dig deeper and study the empirical measures associated to a greedy quantization sequences. 
In fact, sequences of asymptotically optimal $n$-quantizers $(\Gamma_n)_{n \geq 1}$, of an absolutely continuous distribution $P$ w.r.t. the Lebesgue measure with density $f$, satisfy some empirical measure convergence theorems established in \cite{GraLu00} (see theorem $7.5$ p. $96$) and \cite{Delattre04} and recalled below, where
$$\ds \widehat{P}_n=\frac{1}{n} \sum_{i=1}^{n} \delta_{x_i^{n}} \qquad \mbox{ and } \qquad  \widetilde{P}_n=\sum_{i=1}^{n} P(W_i(\Gamma_n)) \delta_{x_i^{n}}$$ 
designate, respectively, the empirical standard measure and the empirical weighted measure associated to $\Gamma_n=\{x_1^n,\ldots,x_n^n\}$.
\begin{thm}
	Assume $P$ is absolutely continuous w.r.t the Lebesgue measure on $\R^d$ with density $f$. Let $\Gamma_n$ be an asymptotically optimal $n$-quantizer of $P$. Then, denoting $C=\big(\int_{\R} f^{\frac{d}{d+p}}(u) du \big) ^{-1}$, one has
	\begin{equation}
	\label{convmesemp}
	 \widetilde{P}_n \underset{n\rightarrow +\infty}{\Rightarrow} P \qquad \mbox{and} \qquad \widehat{P}_n \underset{n\rightarrow +\infty}{\Rightarrow} C\, f^{\frac{d}{p+d}}(u) du.
	\end{equation}
\end{thm}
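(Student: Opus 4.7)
The plan is to handle the two weak convergences separately, since they differ in difficulty by a wide margin. I will first dispose of the weighted empirical measure, then tackle the standard empirical measure, which is the genuine Bucklew-Wise-Gersho content.

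\textbf{Convergence $\widetilde P_n \Rightarrow P$.} By the Portmanteau theorem, it suffices to show $\int g\, d\widetilde P_n \to \int g\, dP$ for every bounded Lipschitz $g$. Since $W_i(\Gamma_n)$ partitions $\R^d$ (up to a $P$-negligible boundary),
\[
\int g\, d\widetilde P_n - \int g\, dP = \sum_{i=1}^n \int_{W_i(\Gamma_n)} \bigl(g(x_i^n) - g(x)\bigr)\, dP(x).
\]
Bounding by the Lipschitz constant gives an upper estimate by $[g]_{\mathrm{Lip}}\int \mathrm{dist}(x,\Gamma_n)\, dP(x) = [g]_{\mathrm{Lip}}\, e_1(\Gamma_n,P)$, which is dominated by $[g]_{\mathrm{Lip}}\, e_p(\Gamma_n,P)$ via monotonicity of $L^r$-norms. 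Asymptotic optimality of $(\Gamma_n)$ combined with Zador's theorem (Theorem~\ref{Zadoretpierce}(a)) yields $e_p(\Gamma_n,P)\to 0$, so the right-hand side vanishes.

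\textbf{Convergence $\widehat P_n \Rightarrow C f^{d/(p+d)} d\lambda_d$.} This is the hard part. The plan is a localization argument that compares the local behavior of $\Gamma_n$ on small cubes to optimal quantization of uniform distributions. I would fix a continuous compactly supported test function $\varphi$ and argue as follows.

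\emph{Step 1 (dyadic partition).} Partition a large cube $[-M,M]^d$ into small cubes $(C_k)_{k\le K}$ of edge length $\eta$, and assume (by a standard density argument) that $f$ is continuous and bounded on $[-M,M]^d$, so $f|_{C_k}\approx f_k$ is nearly constant. Let $N_n(C_k) := \#(\Gamma_n\cap C_k)$ and $\pi_{n,k} := N_n(C_k)/n$.

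\emph{Step 2 (two-sided Zador bound on each cube).} The key identity is
\[
e_p(\Gamma_n,P)^p = \sum_k \int_{C_k} \mathrm{dist}(x,\Gamma_n)^p\, dP(x) + (\text{tail on }[-M,M]^{dc}).
\]
For each $k$, bound $\mathrm{dist}(x,\Gamma_n)$ from below by $\mathrm{dist}(x,\Gamma_n\cap C_k^+)$ on an interior sub-cube $C_k^-$, where $C_k^\pm$ are slightly shrunk/enlarged versions of $C_k$. Applying Zador's theorem to the (near-)uniform distribution on $C_k^-$ gives, asymptotically,
\[
\int_{C_k^-} \mathrm{dist}(x,\Gamma_n)^p\, dP(x) \gtrsim f_k\, \widetilde J_{p,d}^{\,p}\, \lambda_d(C_k)^{1+p/d}\, N_n(C_k)^{-p/d}.
\]
Summing over $k$ and comparing with the global rate $n^{p/d} e_p(\Gamma_n,P)^p \to \widetilde J_{p,d}^{\,p}\|f\|_{L^{d/(d+p)}(\lambda_d)}$ from Zador's theorem produces
\[
\liminf_n \sum_k f_k\, \lambda_d(C_k)^{1+p/d}\, (n\pi_{n,k})^{-p/d} \ge \sum_k f_k\, \lambda_d(C_k)\cdot \Bigl(\sum_j f_j^{d/(d+p)}\lambda_d(C_j)\Bigr)^{p/d} + o_\eta(1).
\]

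\emph{Step 3 (Hölder extremality).} The reverse Hölder inequality with conjugates $-p/d$ and $d/(d+p)$ applied to the weights $(f_k \lambda_d(C_k))$ and $(\pi_{n,k})$ shows equality can only hold when $\pi_{n,k}$ is proportional to $f_k^{d/(d+p)}\lambda_d(C_k)$. Combined with Step 2 this forces
\[
\pi_{n,k} \xrightarrow[n\to\infty]{} C f_k^{d/(d+p)}\, \lambda_d(C_k) + o_\eta(1),
\]
where $C=(\int f^{d/(d+p)} d\lambda_d)^{-1}$ (the factor coming from the normalization $\sum_k \pi_{n,k}\to 1$, after checking that the mass escaping $[-M,M]^d$ vanishes as $M\to\infty$).

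\emph{Step 4 (passage to the limit in $\varphi$).} Writing $\varphi_\eta := \sum_k \varphi(x_k^*)\mathbf 1_{C_k}$ for cube centers $x_k^*$, one has $\frac 1n \sum_i \varphi(x_i^n) = \sum_k \varphi(x_k^*)\pi_{n,k} + o(1)$, which converges by Step 3 to $C\int \varphi_\eta f^{d/(d+p)} d\lambda_d$. Letting $\eta\to 0$ and $M\to\infty$ (using uniform continuity of $\varphi$ on compacts and its boundedness for the tails) gives $\int \varphi\, d\widehat P_n \to C\int \varphi f^{d/(d+p)} d\lambda_d$.

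\textbf{Main obstacle.} The real difficulty lies in Step 2 and 3: proving simultaneously a matching upper and lower bound on the local fraction $\pi_{n,k}$ so that the global Zador constant is attained \emph{only} at the Bucklew-Wise density. The boundary effects between cubes (which would force one to use $C_k^-\subsetneq C_k\subsetneq C_k^+$ and to control the crosstalk of Voronoi regions across cube boundaries) and the truncation to $[-M,M]^d$ (which needs an integrability argument to show the lost mass is negligible uniformly in $n$) are where all the technical care concentrates. The weighted measure statement, by contrast, requires nothing beyond $e_p(\Gamma_n,P)\to 0$.
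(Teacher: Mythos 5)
First, a point of reference: the paper does not prove this statement at all — it is recalled verbatim (with the weights $\widetilde P_n$, $\widehat P_n$ defined just above it) and attributed to Graf--Luschgy \cite{GraLu00} (Theorem 7.5) and \cite{Delattre04}, so there is no in-paper proof to compare against. Your outline is, in substance, the classical proof from that literature: the weighted measure $\widetilde P_n$ is the law of $\pi_{\Gamma_n}(X)$ and converges because $e_p(\Gamma_n,P)\to 0$ (one caveat: your bound $e_1\le e_p$ only holds for $p\ge 1$; for $p<1$ you should instead truncate, $|g(x_i^n)-g(x)|\le \min\big(2\|g\|_\infty,[g]_{\mathrm{Lip}}\,\mathrm{dist}(x,\Gamma_n)\big)$, and use convergence in probability), while the hard statement on $\widehat P_n$ is obtained by localizing Zador's theorem and exploiting the equality case in the (reverse) H\"older inequality, which is exactly the Bucklew--Wise/Graf--Luschgy mechanism. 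The bookkeeping you flag (the $C_k^-\subset C_k\subset C_k^+$ firewalls, truncating $\mathrm{dist}(x,\Gamma_n)$ at the gap width so that points outside $C_k^+$ cannot help, subsequence extraction so that all finitely many $\pi_{n,k}$ converge before summing liminfs, and the tail/escape-of-mass control which in fact comes for free from $\sum_k\pi_k\le 1$ inside the H\"older step) is standard and can be completed.

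There is, however, one genuine gap: the opening move of Step 1, ``assume (by a standard density argument) that $f$ is continuous and bounded on $[-M,M]^d$'', is not a legitimate reduction. Replacing $f$ by a continuous approximation changes the measure $P$, and a sequence $(\Gamma_n)$ that is asymptotically optimal for $P$ has no reason to be asymptotically optimal (nor even rate-comparable) for the approximating measure, so neither the global Zador identity you match against in Step 2 nor the local lower bounds survive the substitution; density of continuous functions in $L^1$ transfers neither hypothesis nor conclusion. The known proofs avoid this in one of two ways: either (as in \cite{GraLu00}) one dispenses with cubes altogether and proves, for every open set $A$, the portmanteau inequality $\liminf_n \frac1n\,\mathrm{card}(\Gamma_n\cap A)\ge P_p(A)$ by splitting the distortion between $A$ and $A^c$, applying Zador's lower bound to the restricted measures $P(\cdot\cap A)$ and $P(\cdot\cap A^c)$ (which requires no regularity of $f$) with the respective point counts, and invoking strict convexity of $t\mapsto u\,t^{-p/d}+v\,(1-t)^{-p/d}$; or one keeps your cube partition but replaces ``$f$ nearly constant by continuity'' with a Lebesgue-differentiation argument, restricting on each small cube to the sub-level set $\{f\ge(1-\ve)f_k\}$ whose relative measure tends to $1$ for $\lambda_d$-a.e.\ center as $\eta\to0$. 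With either repair, and the $p<1$ fix above, your argument closes; note also that, as in the cited sources, a moment condition $\int\|x\|^{p+\delta}dP<+\infty$ (implicit in your appeals to Zador's theorem) should be added to the statement for the proof to go through.
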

We hope to obtain such results for greedy sequences or, at least, for sub-optimal greedy sequences defined in the previous section. To this end, we {\em ``divide''} the two limits mentioned in~$(\ref{convmesemp})$, along the sequence $(W_i(a^{(n)}))_{1 \leq i \leq n}$ of the Vorono\"i weights associated to $(a_n)_{n \geq 1}$, and we obtain 
$n P_l(W_i(a^{(n)}))  \simeq \frac{1}{C} f^{1-\frac{d}{d+p}}(a_i^{(n)})
= C^{-1} f^{\frac{p}{d+p}}(a_i^{(n)}). $ 
Hence, for all $i \in \{1, \ldots , n\}$, the limiting measure of the Vorono\"i cells of the greedy sequence is given by 
\begin{equation}
\label{poidslimite}
P_l(W_i(a^{(n)})) \simeq \frac{f^{\frac{p}{d+p}}(a_i^{(n)})}{C n}
\end{equation}
In other words, if the greedy sequences satisfy the convergence of the empirical measures, then the weights of the Vorono\"i cells, computed by
\begin{equation}
\label{poidsnormal}
\ds P(W_i(a^{(n)}))= F_{P}\left(a_{i+\frac{1}{2}}^{(n)}\right)-F_{P}\left(a_{i-\frac{1}{2}}^{(n)}\right),
\end{equation}
where $ a_{i+\frac{1}{2}}^{(n)}=\frac{a_i^{(n)}+a_{i+1}^{(n)}}{2}$ for $1 \leq i \leq n-1$ and $F_{P}$ is the c.d.f of $P$, must converge to the limit weights $P_l(W_i(a^{(N)}))$ given in~$(\ref{poidslimite})$. 

Numerical experiments were established for the normal distribution $\mathcal{N}(0,1)$, uniform distribution $\mathcal{U}([0,1])$, exponential distribution $\mathcal{E}(1)$ and Laplace distribution $\mathcal{L}(0,1)$. We observe positive results in the four cases, the weights of the Vorono\"i cells computed online get closer to the limit weights $P_l$ when $n$ increases. For the gaussian distribution, we observe a more important convergence for the subsequences $a^{(2^k-1)}$ (as predicted). We present, in figure~\ref{convempiricalmeasure} the obtained results for the exponential distribution where we compare the weights~$(\ref{poidsnormal})$ (blue) and the limit weights~$(\ref{poidslimite})$ (red) for different number of points $n$.
\begin{figure}
	\begin{center}
		\begin{tabular}{cc}
			\includegraphics[width=.35\textwidth,height=.25\textheight,angle=0]{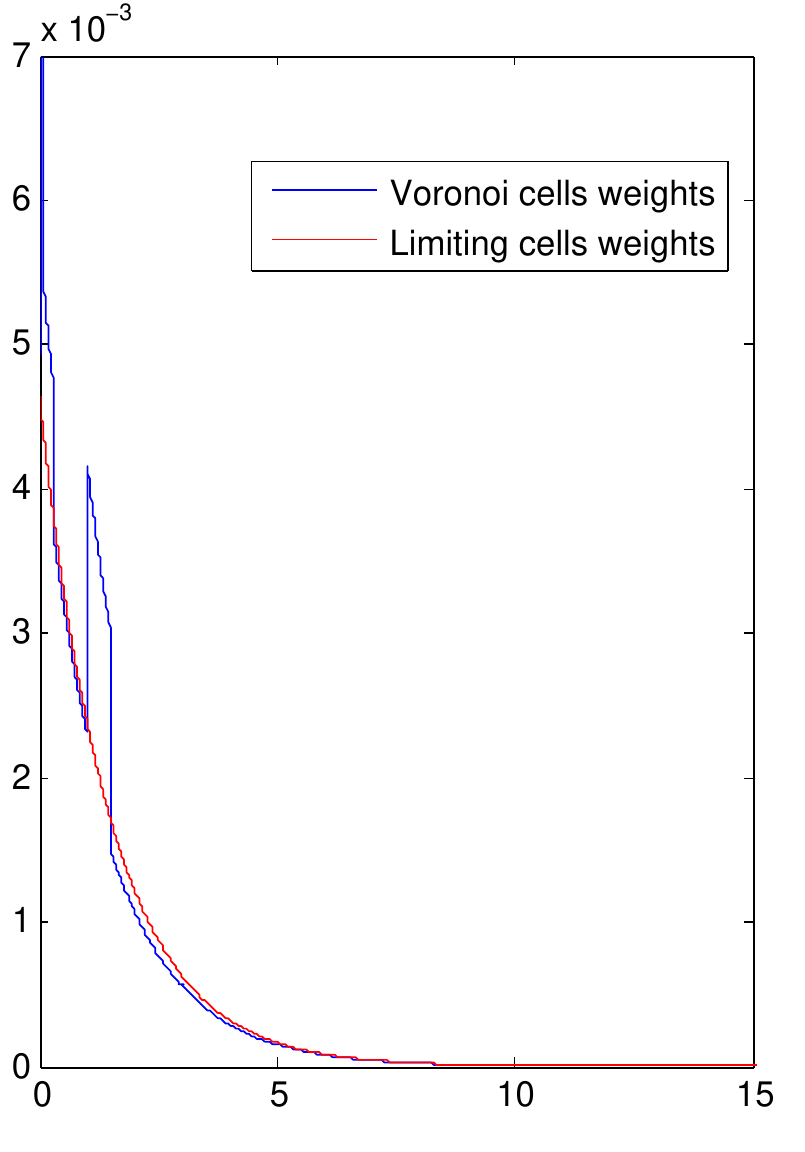} &
			\hspace{1cm}
			\includegraphics[width=.35\textwidth,height=.25\textheight,angle=0]{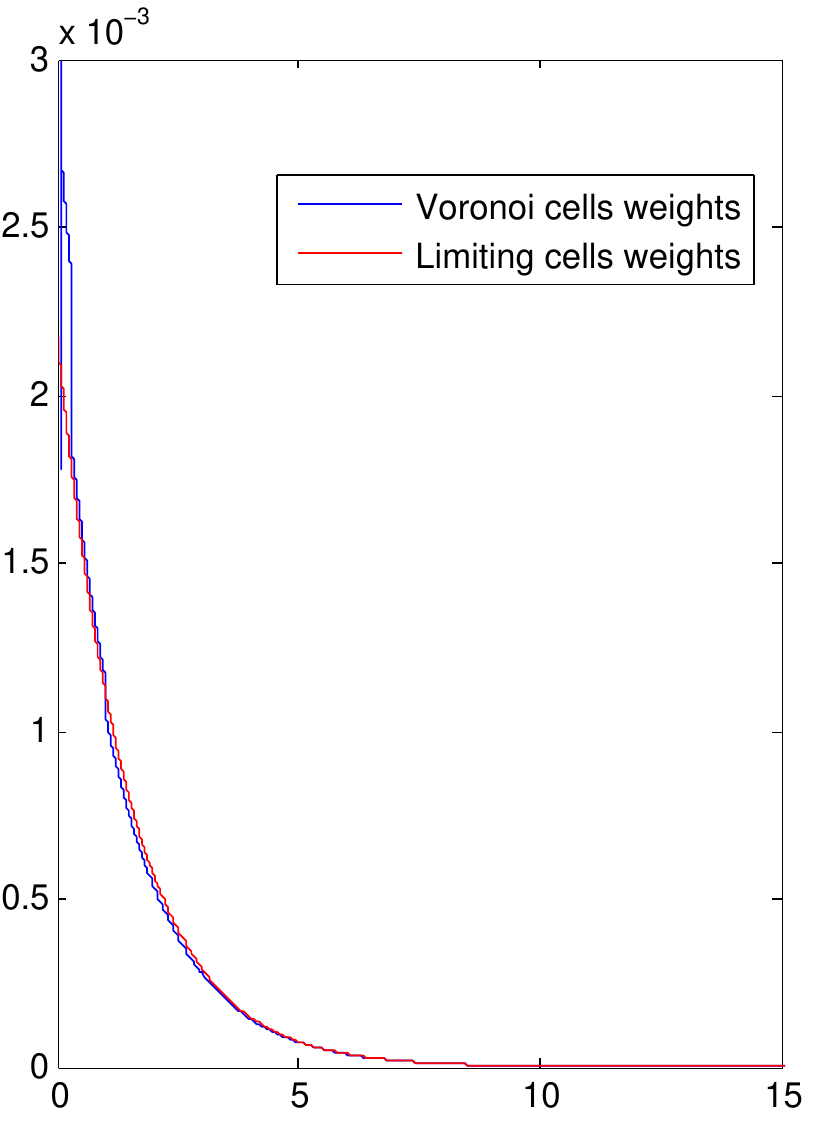}
		\end{tabular}
	\end{center} 
	\vspace{-0.5cm}
	\caption{Comparison of the weights of the Vorono\"i cells computed online (blue) to the limit weights of the cells (red) for the exponential distribution $\mathcal{E}(1)$ for $n=645$ (left) and $n=1\,379$ (right).}
	\label{convempiricalmeasure}
\end{figure}
\subsection{Stationarity and $\rho$-quasistationarity}
An interesting question is to see if the greedy sequences are stationary i.e. satisfy
$$a_i^{(n)}=\mathbb{E}(X|X \in W_i(a^{(n)})), i=1, \ldots,n,$$
or can be close to stationarity, a property shared by quadratic optimal quantizers. We compute the error $ \|\hat{X}^{a^{(n)}} -\E(X|\hat{X}^{a^{(n)}})\|_1$ under the standard empirical measure $ \widehat{P}_n= \frac{1}{n} \sum_{i=1}^{n} \delta_{a_i^{(n)}}$, i.e.
\begin{equation}
\label{aqs}
\sum_{i=1}^{n}\left| a_i^{(n)} - \mathbb{E}(X|X \in W_i(a^{(n)}))\right|= \sum_{i=1}^{n}\left| a_i^{(n)}-\ds\frac{\int_{W_i(a^{(n)})}\xi dP(\xi)}{P(W_i(a{(^n)}))}\right|.
\end{equation}
and we hope to observe a convergence to $0$, when $n$ increases. But, numerical experiments, conducted for several probability distributions, show that $a^{(n)}$ cannot be stationary, in the sense of~$(\ref{aqs})$. In fact, we will show, for a specific class of distributions, that greedy quantization sequences are not stationary (in the sense of~$(\ref{aqs})$), except when $n\in \{1,3\}$. We will use the following result given in \cite{Kieffer}.
\begin{thm} \label{kieffer} (J.C. Kieffer)
	Let $d=1$ and $\mu$ a probability distribution with log-concave density. Then, there exists a unique stationary quantizer of $\mu$.
\end{thm}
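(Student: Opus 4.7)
The plan is to split the claim into existence and uniqueness of a stationary $n$-quantizer, for every fixed $n \geq 1$. Existence follows from a soft compactness argument that works for any distribution with finite second moment (which log-concave densities automatically give, up to a trivial reduction to the support of $\mu$). The distortion
$$
D(x_1,\ldots,x_n) = \int_{\R} \min_{1 \leq i \leq n} (\xi - x_i)^2 \mu(d\xi)
$$
is continuous and coercive on $\R^n$, so it attains its minimum. A direct differentiation shows that any critical point of $D$ with pairwise distinct components satisfies the centroid condition $x_i = \E[X \mid X \in W_i(\Gamma)]$, i.e.\ is a stationary quantizer; and a minimizer cannot have repeated components when $\mu$ has infinite support, so existence is settled.

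For uniqueness the idea I would follow is to extract strict convexity of $D$ from log-concavity. Writing $f = e^{-\phi}$ with $\phi$ convex and ordering $x_1<\cdots <x_n$, the Voronoi boundaries are the midpoints $m_i = (x_i+x_{i+1})/2$, and one computes
$$
\frac{\partial^2 D}{\partial x_i^2} = 2\,\mu(W_i(\Gamma)) - \tfrac{1}{2}(x_{i+1}-x_i)\,f(m_i) - \tfrac{1}{2}(x_i-x_{i-1})\,f(m_{i-1}),
$$
with off-diagonal entries involving only $f$ evaluated at the midpoints. The key calculation is to show that under log-concavity of $f$, the Hessian of $D$ restricted to the ordered simplex $\{x_1<\cdots<x_n\}$ is strictly positive definite at every stationary point: log-concavity forces $f$ to decrease geometrically away from any mode, which in turn controls the size of the off-diagonal terms relative to the diagonal ones (diagonal dominance or a weighted version thereof). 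Because a strictly convex smooth function on a convex open set has at most one critical point, this yields uniqueness of the stationary quantizer.

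An alternative route, which might be technically cleaner, is to recast stationarity as a fixed-point equation $x_i = g(m_{i-1}, m_i)$ where $g(a,b)=\E[X\mid X\in [a,b]]$. A classical identity (integration by parts) expresses $\partial_a g$ and $\partial_b g$ in terms of the conditional variance and the density at the endpoints; log-concavity of $f$ implies $0<\partial_a g, \partial_b g<1$. Composing with the linear map $x\mapsto m$ gives a self-map of the ordered simplex whose Jacobian is strictly contractive in a suitable norm, forbidding two distinct fixed points.

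The main obstacle, in either approach, is turning the pointwise log-concavity into the quantitative bound needed for global uniqueness rather than merely local. At endpoints of the support, or for $n\geq 2$ when the extreme cells are semi-infinite, one must control the conditional means on unbounded intervals, where the clean diagonal-dominance/contractivity estimates degrade. I would handle this by invoking the stability of log-concavity under conditioning (a conditional distribution of a log-concave variable on an interval is again log-concave) and proceeding by induction on $n$: uniqueness of the stationary $(n-1)$-quantizer of the conditioned measure transfers to uniqueness for $n$ after an additional monotonicity argument on how adding a new point perturbs the existing ones.
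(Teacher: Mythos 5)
The paper does not prove this statement: it is a cited result (Kieffer, 1982, with related earlier work by Fleischer and Trushkin), invoked as a black box to establish Proposition~5.2 on non-stationarity. So there is no in-paper proof to compare against; what follows is an assessment of your sketch on its own terms.

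Your existence step is sound and standard (coercivity of the distortion plus continuity, with the observation that a minimizer with coincident components cannot occur when $\mathrm{supp}(\mu)$ is infinite). The problem is the uniqueness step, which is exactly where Kieffer's theorem is nontrivial, and both of your routes have real gaps. In the Hessian route, the inequality you would need for diagonal dominance at a stationary ordered configuration reduces to
$\int_{m_{i-1}}^{m_i} f \geq (x_i-m_{i-1})\,f(m_{i-1}) + (m_i-x_i)\,f(m_i)$,
and it is not at all clear that log-concavity alone yields this; for the uniform density it holds only with equality, so you are at the boundary of the diagonal-dominance regime, and for densities that are steeply monotone across a cell the comparison does not obviously go your way. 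More fundamentally, even if you could show the Hessian is positive definite at every critical point, that does not establish global strict convexity of $D$ on the ordered simplex, and without global convexity the conclusion ``at most one critical point'' does not follow directly; you would need a separate topological (Morse-type) argument to rule out coexisting nondegenerate minima, which you do not supply. Your second route (rewriting stationarity as a fixed point of $x\mapsto(g(m_{i-1},m_i))_i$ and proving contractivity from $0<\partial_a g,\partial_b g<1$) is in spirit much closer to the classical Fleischer--Trushkin--Kieffer arguments, which parametrize by one free boundary and exploit monotone dependence of each successive cell boundary induced by log-concavity. But as you yourself note, the unbounded extreme cells are precisely where the contraction estimates degrade, and the induction-on-$n$ patch you propose is only gestured at. In short: the skeleton is reasonable, but the two claims doing the heavy lifting (diagonal dominance from log-concavity; uniqueness from pointwise positive-definiteness) are not justified as stated and, in the first case, are likely not true in the form you assert.
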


\begin{prop}
	\label{stationary}
	Let $X$ be a random variable with distribution $P$ which is symmetric and unimodal (log-concave density) and $a^{(n)}$ a corresponding greedy quantization sequence. Then, for every $n \in \mathbb{N}\setminus \{1,3\}$, the sequence $a^{(n)}$ is not stationary. 
\end{prop}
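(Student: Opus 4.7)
My plan is to combine Kieffer's Theorem~\ref{kieffer} (uniqueness of the stationary 1D quantizer for log-concave densities) with the symmetry of $P$ to force any stationary $n$-quantizer to be a symmetric subset of $\R$, and then to check that the greedy sequence $a^{(n)}$ cannot coincide with this symmetric set except possibly for $n\in\{1,3\}$. WLOG I assume $P$ is symmetric about $0$, so $a_1=0$ is the unique $L^2$-median. For $n=2$, direct computation settles it: with $a^{(2)}=\{0,a_2\}$, $a_2>0$ WLOG, the Voronoi cell of $0$ is $(-\infty,a_2/2)$, and the symmetry identity $\int_{-\infty}^{0}x f(x)\,dx=-\int_{0}^{\infty}x f(x)\,dx$ gives
\[
\int_{-\infty}^{a_2/2}x f(x)\,dx=-\int_{a_2/2}^{\infty}x f(x)\,dx<0,
\]
so the centroid of the cell is strictly negative and $0$ is not its own centroid.

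For $n\geq 4$, Theorem~\ref{kieffer} yields a unique stationary $n$-quantizer $\Gamma_n^\ast$. Since $P$ is symmetric, $-\Gamma_n^\ast$ is also stationary, so by uniqueness $\Gamma_n^\ast=-\Gamma_n^\ast$ as a set. If $a^{(n)}$ were stationary, uniqueness forces $a^{(n)}=\Gamma_n^\ast$. When $n$ is even, a symmetric $n$-set cannot contain $0$ (otherwise the remaining $n-1$ points would pair up under $x\leftrightarrow -x$, impossible for an odd count), but $0=a_1\in a^{(n)}$, contradiction.

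When $n\geq 5$ is odd, $\Gamma_n^\ast=\{\pm b_k,\ldots,\pm b_1,0\}$ with $k=(n-1)/2\geq 2$ and $0<b_1<\cdots<b_k$. Then $a_2\in\Gamma_n^\ast\setminus\{0\}$, WLOG $a_2=b_j>0$. The greedy first-order condition yields $b_j=\Phi(b_j/2)$ where $\Phi(c):=\E[X\mid X>c]$ has derivative $\Phi'(c)=f(c)(\Phi(c)-c)/\P(X>c)>0$, hence is strictly increasing. Stationarity at $b_j$ in $\Gamma_n^\ast$ reads $b_j=\E[X\mid W_j^\ast]$ with $W_j^\ast\subsetneq (b_j/2,\infty)$ (strict because $b_{j-1}>0$ on the left when $j\geq 2$, and the upper endpoint is finite when $j<k$); setting $D:=(b_j/2,\infty)\setminus W_j^\ast$, the weighted-average identity forces $\E[X\mid D]=b_j$. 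For $j=k$ the set $D=(b_k/2,(b_{k-1}+b_k)/2]$ lies strictly below $b_k$, contradiction; for $j=1$ the set $D=[(b_1+b_2)/2,\infty)$ lies strictly above $b_1$ with $\E[X\mid D]>(b_1+b_2)/2>b_1$, contradiction.

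The main obstacle is the intermediate range $2\leq j\leq k-1$ (arising only for $n\geq 7$), where $D$ has a part strictly below $b_j$ and a part strictly above, so $\E[X\mid D]=b_j$ is not immediately refuted by location alone. To close this case I plan to split $D=D_-\sqcup D_+$ according to the sign of $x-b_j$, use the stationarity relations at $b_{j+1},\ldots,b_k$ to express $\E[X\mid D_+]$ as a weighted average of points strictly greater than $b_j$, and invoke the log-concavity of $f$ (strict decrease on $(0,\infty)$) to control the relative masses $\P(D_-)$ and $\P(D_+)$, showing that the positive moment contributed by $D_+$ strictly dominates the negative one of $D_-$. This quantitative log-concavity estimate is the piece I expect to be the hardest.
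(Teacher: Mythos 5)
Your handling of even $n$ is correct and coincides with the paper's: Kieffer's uniqueness, the observation that the unique stationary $n$-quantizer must be symmetric, and the parity obstruction to $0$ belonging to a symmetric set of even cardinality (the paper phrases it as $x_k^{(n)}=-x_{k+1}^{(n)}$ for consecutive points, hence $0\notin x^{(n)}$, but it is the same fact). The direct computation for $n=2$ is also fine. The genuine problem is the odd case, and you have flagged it yourself. For $n=5$ your argument does close, since then $k=2$ and the index $j$ of $a_2$ in $\Gamma_n^\ast$ can only be $1$ or $k$, both of which you refute by locating $D$ entirely on one side of $b_j$. But for every odd $n\ge 7$ the intermediate indices $2\le j\le k-1$ occur, and there $D=D_-\sqcup D_+$ with $D_-=(b_j/2,(b_{j-1}+b_j)/2]$ below $b_j$ and $D_+=[(b_j+b_{j+1})/2,\infty)$ above it, so $\E[X\mid D]=b_j$ expresses a balance that location alone cannot exclude. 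The ``quantitative log-concavity estimate'' you defer is precisely the missing step, and it is not routine: nothing in the hypotheses visibly prevents the negative moment contributed by $D_-$ from cancelling the positive one of $D_+$ for some symmetric log-concave $P$. As written, the proof is therefore incomplete for all odd $n\ge 7$.

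For comparison, the paper sidesteps this difficulty entirely by deducing the odd case from the even one rather than from the first-order condition on $a_2$. Since $a^{(2k)}$ is not stationary, at least one Voronoi cell of $a^{(2k)}$ violates the centroid condition, and by symmetry of $P$ so does its mirror-image cell, giving two defective cells. Passing from $a^{(2k)}$ to $a^{(2k+1)}$ inserts a single point, which can only alter the cell it falls into (and its immediate neighbours); hence at least one of the two defective cells survives unchanged in $a^{(2k+1)}$, which consequently cannot be stationary. This induction on parity buys a complete argument with no analytic estimate at all, at the price of a somewhat informal bookkeeping of which cells are perturbed. If you wish to keep your route, you must actually prove the comparison between $\E[X\mid D_-]$ and $\E[X\mid D_+]$ for $2\le j\le k-1$; otherwise the even-to-odd perturbation argument is the cleaner way to finish.
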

\begin{proof}   
	We suppose that $\mathbb{E}[X]=0$ (symmetric around $0$). If it is not the case, a translation gives the same results. We will detail the proof in $3$ cases\\ 
	$\rhd$ {\bf For $n=3$:} 
		Since $\mathbb{E}[X]=0$, the first point is $a_1=0$. A second point is given by 
		$$a_2 =  {\rm argmin}_{a \in \mathbb{R}} \mathbb{E}X^2 \wedge (X-a)^2 =\big\{ \nabla_{a_2} \,  \mathbb{E}X^2 \wedge (X-a_2)^2=\int_{W_2(a^{(i)})}(\xi - a_2)dP(\xi)=0\big\}.$$
		Hence, $a_2=\frac{\int_{W_2(a^{(n)})}\xi dP(\xi)}{P(W_2(a^{(n)}))}$ is stationary. The third point is $a_3=-a_2$ by symmetry of $P$ so $a_3$ is also stationary. Finally, $a_1=0$ is also stationary since
		$\int_{a_2/2}^{a_3/2}\xi dP(\xi)=\int_{a_2/2}^{-a_2/2}\xi dP(\xi)=0$. Consequently, the sequence $a^{(3)}=\{-a_2;a_1;a_2\}$ is stationary.\\
		$\rhd$ {\bf For $n=2k$ even:}
		Since $P$ is unimodal, the stationary quantizer is unique, let $x^{(n)}$ be this quantizer, which is the $n$-optimal quantizer of $P$ because we know it is stationary. The symmetry of $P$ lets us know that the quantizer $(x_{n+1-l}^{(n)})_{1 \leq l \leq n}$ of $P$ is also stationary, so, for every $l \in \{1, \ldots,n\}$, $x_l^{(n)}+x_{n+1-l}^{(n)}=0.$ 
		Since $n=2k$ is even, we have, in particular, 
		$$x_{k}^{(n)}=-x_{n+1-k}^{(n)}=-x^{(n)}_{n+1-\frac{n}{2}}=-x^{(n)}_{k+1},$$
		so,
		$x_k^{(n)}<0<x_{k+1}^{(n)}$
		and, since, $x_k^{(n)}$ et $x_{k+1}^{(n)}$ are two consecutive terms of the grid, we deduce that $0$ is not an element of $x^{(n)}$, and hence can not be a point of a stationary quantizer. Consequently, the greedy sequence starting at $a_1=0$ can not be stationary.\\		
		$\rhd$ { \bf For $n=2k+1$ odd:}
		We consider the greedy non-stationary sequence  $a^{(2k)}$ of even size. There exists, at least, one non-stationary Vorono\"i cell $W_i(a^{(2k)})$. Its symmetric cell $W_{2k+1-i}(a^{(2k)})$ will also be non-stationary because $P$ is symmetric. So, we have at least $2$ non stationary Vorono\"i cells. While building the sequence $a^{(2k+1)}$, we add a new point which will be in one of the Vorono\"i cells without modifying the others. If the new point is added in one of the non-stationary cells, we know that the second one will remain untouched, having, at least, one non-stationary cell in $a^{(2k+1)}$. And, if the new point is not in these cells, then they will remain untouched and there will be, at least, $2$ non-stationary cells in $a^{(2k+1)}$.		\hfill $\square$\\
\end{proof}

However, further different numerical observations show that most greedy quantization sequences satisfy a certain criteria that we can call $\rho$-quasi-stationarity. This criterion approaches to the stationary character verified by optimal quantizers and  can be defined, for $r \in \{1,2\}$ and $\rho \in [0,1]$, by

\begin{equation}
\label{rhoaqs}
 \|\widehat{X}^{a^{(n)}} -\mathbb{E}(X|\widehat{X}^{a^{(n)}})\|_r= o(\| \widehat{X}^{a^{(n)}}-X\|_{1+\rho}^{1+\rho}), \quad \mbox{or} \quad \frac{ \|\widehat{X}^{a^{(n)}} -\mathbb{E}(X|\widehat{X}^{a^{(n)}})\|_r}{\| \widehat{X}^{a^{(n)}}-X\|_{1+\rho}^{1+\rho}} \underset{n \rightarrow +\infty}{\longrightarrow} 0.
 \end{equation}
It is satisfied by greedy sequences for $\rho$ lower than certain optimal values $\rho_l$ depending on the distribution $P$ and on the value of $r$. We expose, in table~$\ref{rhooptimal}$, these values of $\rho_l$ for $r=1$ and $r=2$ for the normal, uniform or Gaussian distribution.
\begin{table}
	\begin{center}
		\begin{tabular}{|l|c|c|r|}
			\hline
			& $\mathcal{N}(0,1)$ & $\mathcal{U}([0,1])$ & $\mathcal{E}(1)$ \\ 
			\hline
			$r=1$    &      $\rho_l= 0.92$   &     $\ds \rho_l=\frac{3}{4}$   & $\ds \rho_l=\frac{2}{3} $ \\
			\hline
			$r=2$     &  $\rho_l=0.47$       &     $\ds \rho_l=\frac{3}{8}$   &  $\ds \rho_l=\frac{1}{3}$\\
			\hline
		\end{tabular}
	\end{center}
	\caption{Values of optimal $\rho_l$ for different distributions and for $r \in \{1;2\}$.} 
	\label{rhooptimal}
\end{table}
This property is important because it brings improvements to quantization-based numerical integration. The error induced by this integration using standard cubature formula for functions with $\rho$-H\"older gradient is bounded by 
$$|\mathbb{E}f(X)-\mathbb{E}f(\widehat{X}^{a^{(n)}})|  \leq \frac{1}{1+\rho}[\nabla f]_{\rho} \|X-\widehat{X}^{a^{(n)}}\|_{1+\rho}^{1+\rho}$$
while the classical error bound is given by (see \cite{Pages15})
$$|\E f(X)-\E f(\widehat{X}^{a^{(n)}})| \leq [f]_{\text{Lip}} \|X-\widehat{X}^{a^{(n)}}\|_p.$$
In fact, if $\rho \in [0,1]$ and $f$ is a continuous function with $\rho$-h\"older gradient with H\"older coefficient $[\nabla  f]_{\rho}$, we have 
\begin{align*}
\mathbb{E}f(X)-\mathbb{E}f(\widehat{X}^{a^{(n)}}) \leq & \,  \mathbb{E}\big(\nabla f(\widehat{X}^{a^{(n)}})|X-\widehat{X}^{a^{(n)}}\big) \\
& + \E\left[  \int_0^1 \left( \nabla f\left( \widehat{X}^{a^{(n)}}+t(X-\widehat{X}^{a^{(n)}})\right)-\nabla f(\widehat{X}^{a^{(n)}})| X-\widehat{X}^{a^{(n)}}\right)dt \right].
\end{align*}
Since,
\begin{align*}
\int_0^1 \left( \nabla f\left(  \widehat{X}^{a^{(n)}}+t(X-\widehat{X}^{a^{(n)}})\right)-\nabla f( \widehat{X}^{a^{(n)}})| X-\widehat{X}^{a^{(n)}}\right)dt 
\leq [\nabla f]_{\rho} \mathbb{E}|X-\widehat{X}^{a^{(n)}}|^{1+\rho}\int_0^1 t^{1+\rho} dt, 
\end{align*}
and 
\begin{align*}
\mathbb{E}(\nabla f(\widehat{X}^{a^{(n)}})|X-\widehat{X}^{a^{(n)}})  =\mathbb{E}(\nabla f(\widehat{X}^{a^{(n)}})|X)-\mathbb{E}(\nabla f(\widehat{X}^{a^{(n)}})|\widehat{X}^{a^{(n)}}) \nonumber = \mathbb{E}\left( \nabla f(\widehat{X}^{a^{(n)}})|\mathbb{E}(X|\widehat{X}^{a^{(n)}})-\widehat{X}^{a^{(n)}}\right),
\end{align*}
 we have
\begin{equation*}
|\mathbb{E}f(X)-\mathbb{E}f(\widehat{X}^{a^{(n)}})| \leq \|\nabla f(\widehat{X}^{a^{(n)}})\|_2 \|\mathbb{E}(X|\widehat{X}^{a^{(n)}})-\widehat{X}^{a^{(n)}}\|_2 +  \frac{1}{1+\rho}[\nabla f]_{\rho} \|X-\widehat{X}^{a^{(n)}}\|_{1+\rho}^{1+\rho} .
\end{equation*}
Hence, if~$(\ref{rhoaqs})$ is satisfied, then on can conclude that
\begin{equation}
\label{bornerho}
\limsup_{n} \frac{|\mathbb{E}f(X)-\mathbb{E}f(\widehat{X}^{a^{(n)}})|}{ \|X-\widehat{X}^{a^{(n)}}\|_{1+\rho}^{1+\rho}} \leq \frac{1}{1+\rho}[\nabla f]_{\rho}
\end{equation}
and hence the gain in the quantization-based numerical integration error bounds.
\subsection{Discrepancy of greedy sequences}
The comparison established, in the beginning of section~\ref{algorithmics}, between greedy quantization-based numerical integration and quasi-Monte Carlo methods, showing a gain of $\log(n)$-factor with greedy quantization in terms of convergence rate, drives us to build a relation, based on Pro\"inov's Theorem~$\ref{proinov}$, between the error quantization and the discrepancy. In fact, for every $n$-tuple $\Xi=(\xi_1,\ldots,\xi_n) \in [0,1]^n$, noticing that a Lipschitz function $f$ has always a finite variation 
and considering the function $f:u \rightarrow \min_{1 \leq i \leq n}|u-\xi_i|$ which is $1$-Lipschitz (since $\left| \min_i a_i -\min_i b_i  \right| \leq \max_i |a_i-b_i|)$ and satisfies $f(\xi_i)=0$ for every $i \in \{1, \ldots ,n\}$ and $\ds \int_0^1 f(u)du =e_1(X,\mathcal{U}([0,1]))$, one applies the Koksma-Hlawka inequality ($\ref{koksmahlawka}$) to $f$ to deduce that
\begin{equation}
\label{lien1}
e_1(\Xi,\mathcal{U}([0,1])) \leq D_n^*(\Xi).
\end{equation}
This motivates us to study the discrepancy of greedy sequences hoping that they can be comparable to low discrepancy sequences. We compute the discrepancy of greedy quantization sequences, for $d \in \{1,2,3\}$, using formulas given in \cite{Doerr14} (theorems $1$, $2$ and $3$) and recalled below.
\begin{thm}
	$(a)$ Let $\Xi=(\xi_i)_{1 \leq i \leq n}$ be a sequence in $[0,1]$ and assume $\xi_1 \leq \ldots \leq \xi_n$. Then, for every $n \geq 1$
		\begin{equation}
		D_n^*(\Xi)=\max_{1 \leq i \leq n} \max \left\{\frac in -\xi_i\, ,\, \xi_i -\frac{i-1}{n} \right\}=\frac{1}{2n} +\max_{1 \leq i \leq n} \left|\xi_i-\frac{2i-1}{2n} \right|.
		\end{equation}
	$(b)$ Let $\Xi=(\xi_i)_{1 \leq i \leq n}$ be a sequence in $[0,1]^2$ where each $\xi_i$ have two components $(x^1_i,x_i^2)$. Assume $x_1^1 \leq \ldots \leq x_n^1$. For every $i \in \{1,\ldots,n\}$, we consider $(\xi_{i,0},\ldots,\xi_{i,i+1})$ an increasing reordering of $(0,x_1^2,\ldots,x_i^2,1)$. Then,
		\begin{equation}
		D_n^*(\Xi)=\max_{1 \leq i \leq n} \max_{0 \leq k \leq i} \max \left\{\frac kn -x^1_i \xi_{i,k}\, ,\, x^1_{i+1}\xi_{i,k+1} -\frac{k}{n} \right\}.
		\end{equation}
	$(c)$ Let $\Xi=(\xi_i)_{1 \leq i \leq n}$ be a sequence in $[0,1]^3$ where each $\xi_i$ have three components $(x^1_i,x_i^2,x_i^3)$. Assume $x_1^1 \leq \ldots \leq x_n^1$. For every $i \in \{1,\ldots,n\}$, we consider $(\xi_{i,0},\ldots,\xi_{i,i+1})$ an increasing reordering of $(0,x_1^2,\ldots,x_i^2,1)$.	For a fixed $i$ and $k \in \{1,\ldots,i\}$, we consider $\{\eta_{i,k,0},\ldots,\eta_{i,k,k+1}\}$ an increasing reordering of $\{0,x_1^3,\ldots,x^3_k,1\}$. Then,
		\begin{equation}
		D_n^*(\Xi)=\max_{1 \leq i \leq n} \max_{0 \leq k \leq i} \max_{0 \leq l \leq k} \max \left\{\frac ln -x^1_i \xi_{i,k} \eta_{i,k,l}\, ,\, x^1_{i+1}\xi_{i,k+1} \eta_{i,k,l+1} -\frac{l}{n} \right\}.
		\end{equation}
\end{thm}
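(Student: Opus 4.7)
Both parts rest on two observations: the empirical counting function $u\mapsto \#\{j:\xi_j\preceq u\}$ is piecewise constant along coordinate directions, and the volume function $(u_1,\ldots,u_d)\mapsto u_1\cdots u_d$ is monotone increasing in each coordinate separately. This reduces the computation of $D_n^*$ to a finite maximum over corner values of a cell decomposition of $[0,1]^d$.

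For part $(a)$, I would assume $\xi_1\le\cdots\le\xi_n$ and set $\xi_0:=0$, $\xi_{n+1}:=1$. On each interval $[\xi_i,\xi_{i+1})$ the signed discrepancy $\frac{1}{n}\sum_{j=1}^n\mathds{1}_{\xi_j\le u}-u$ equals $\frac{i}{n}-u$, which is affine decreasing; hence its positive maximum on the interval is $\frac in-\xi_i$ attained at the left endpoint, while the absolute value of its minimum is $\xi_{i+1}-\frac in$ reached as $u\uparrow \xi_{i+1}$. Reindexing the latter quantity gives $\xi_i-\frac{i-1}{n}$, so taking the maximum over $i\in\{1,\ldots,n\}$ yields the first displayed expression. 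The second expression follows from the elementary identity $\max\{a-x,x-b\}=\bigl|x-\tfrac{a+b}{2}\bigr|+\tfrac{a-b}{2}$ applied with $a=\tfrac in$ and $b=\tfrac{i-1}{n}$.

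For parts $(b)$ and $(c)$ the plan is to partition $[0,1]^d$ into rectangular cells on which the count of points in the box $\prod_{j=1}^d [0,u_j]$ is constant, then use monotonicity of $u_1\cdots u_d$ in each coordinate to reduce the sup over each cell to its corners. In dimension two, after sorting by first coordinate, on the strip $u\in[x_i^1,x_{i+1}^1)$ the points with first coordinate $\le u$ are exactly the first $i$ in the sorted list; rewriting their second coordinates in increasing order as $0=\xi_{i,0}\le\cdots\le\xi_{i,i+1}=1$ and letting $v\in[\xi_{i,k},\xi_{i,k+1})$ fixes the count in $[0,u]\times[0,v]$ at exactly $k$. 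The local signed discrepancy $\frac kn-uv$ is coordinatewise monotone, so its positive maximum over the cell is attained at the lower-left corner $(x_i^1,\xi_{i,k})$, while its negative extremum (in absolute value) is reached as $(u,v)\uparrow(x_{i+1}^1,\xi_{i,k+1})$, producing $x_{i+1}^1\xi_{i,k+1}-\frac kn$. Maximizing over $(i,k)$ yields $(b)$. Case $(c)$ follows by the same scheme with an additional nested sort: inside each $(i,k)$-cell one re-sorts the third coordinates of the $k$ selected points into $0=\eta_{i,k,0}\le\cdots\le\eta_{i,k,k+1}=1$ and applies the same corner extremum analysis to $\frac ln-uvw$, which is monotone in each of the three variables.

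The principal obstacle is bookkeeping rather than analysis. In case $(c)$ one must check that the notation ``$x_1^3,\ldots,x_k^3$'' in the statement refers, after the two nested sorts, to the third coordinates of the $k$ points selected by both the first-coordinate and second-coordinate sorts (not to the third coordinates of the original first $k$ points). Furthermore, one must justify that the supremum over the half-open cells is actually realized at the claimed corners: this is handled by the right-continuity of the empirical distribution together with the observation that at each breakpoint the discrepancy value coincides with a corner value of an adjacent cell, so passing to the limit does not escape the finite list of corners over which the outer maximum is taken.
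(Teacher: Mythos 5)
The paper does not prove this theorem at all: it is quoted from Doerr, Gnewuch and Wahlstr\"om (reference \cite{Doerr14}, Theorems 1--3, which go back to Niederreiter in dimension one and to Bundschuh--Zhu in dimensions two and three), so there is no in-paper argument to compare yours with. Your cell-decomposition proof is precisely the standard argument behind those cited results: part $(a)$ and the general mechanism (piecewise-constant counting function, coordinatewise monotonicity of $u_1\cdots u_d$, reduction of the supremum over each half-open cell to its two extreme corners, the elementary identity turning $\max\{a-x,x-b\}$ into $|x-\tfrac{a+b}{2}|+\tfrac{a-b}{2}$) are all correct, and your reading of the ambiguous notation in $(c)$ is the right one.

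One concrete point needs attention in $(b)$ and $(c)$. Your decomposition into strips $u\in[x_i^1,x_{i+1}^1)$, on which the first-coordinate count equals $i$, necessarily includes the strip $i=0$, i.e.\ $0\le u<x_1^1$, whose upper-right corner contributes the value $x_1^1\cdot 1=x_1^1$ to the supremum. This term is not in general dominated by the terms with $i\ge 1$: for $n=1$ and $\xi_1=(0.9,\,0.5)$ one has $D_1^*(\Xi)=0.9$ (take $u\uparrow 0.9$, $v=1$, where the box is empty), whereas the listed terms for $i=1$ give only $\max\{1-0.45,\,0.5,\,0\}=0.55$. So when you ``maximize over $(i,k)$'' you must let $i$ run from $0$ to $n$, with the conventions $x_0^1=0$, $x_{n+1}^1=1$ and $(\xi_{0,0},\xi_{0,1})=(0,1)$ --- which is how the cited source states the formula --- rather than from $1$ to $n$ as transcribed in the statement above. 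With that index range (and its analogue in dimension three) your argument goes through; as literally transcribed, the $d=2$ and $d=3$ formulas are incomplete, and any proof of them as written must fail on the example just given. Make explicit in your write-up which version you are proving, and likewise spell out the boundary conventions $x_0^1$ and $x_{n+1}^1$ that the statement leaves undefined.
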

Numerical results show that, in the one-dimensional case, greedy sequences can be used as a low discrepancy sequence. But, when $d$ becomes larger than $1$, the situation becomes less convincing. In fact, if we use pure greedy sequences designed by implementing Lloyd's algorithm, the discrepancy of these sequences and that of low discrepancy sequences (Niederreiter sequences for example) are comparable and the results are not so bad, but the problem that arises is the complexity of the computations
making greedy sequences less practical. On the other hand, if we use the greedy product multidimensional grids to solve this problems, the computation will be less expensive but the numerical experiments show that there is no gain in terms of discrepancy. Figure~$\ref{discrepancy}$ shows a comparison of the discrepancy of a Niederreiter sequence in dimension $2$ to that of a product greedy quantization sequence of $\mathcal{U}([0,1]^2)$ on the one hand, and to that of pure greedy quantization sequence of $\mathcal{U}([0,1]^2)$ implemented by Lloyd's algorithm on the other hand, emphasizing the conclusions made. \\

\begin{figure}
	\begin{center}
		\begin{tabular}{cc}
			\includegraphics[width=8cm,height=5.8cm,angle=0]{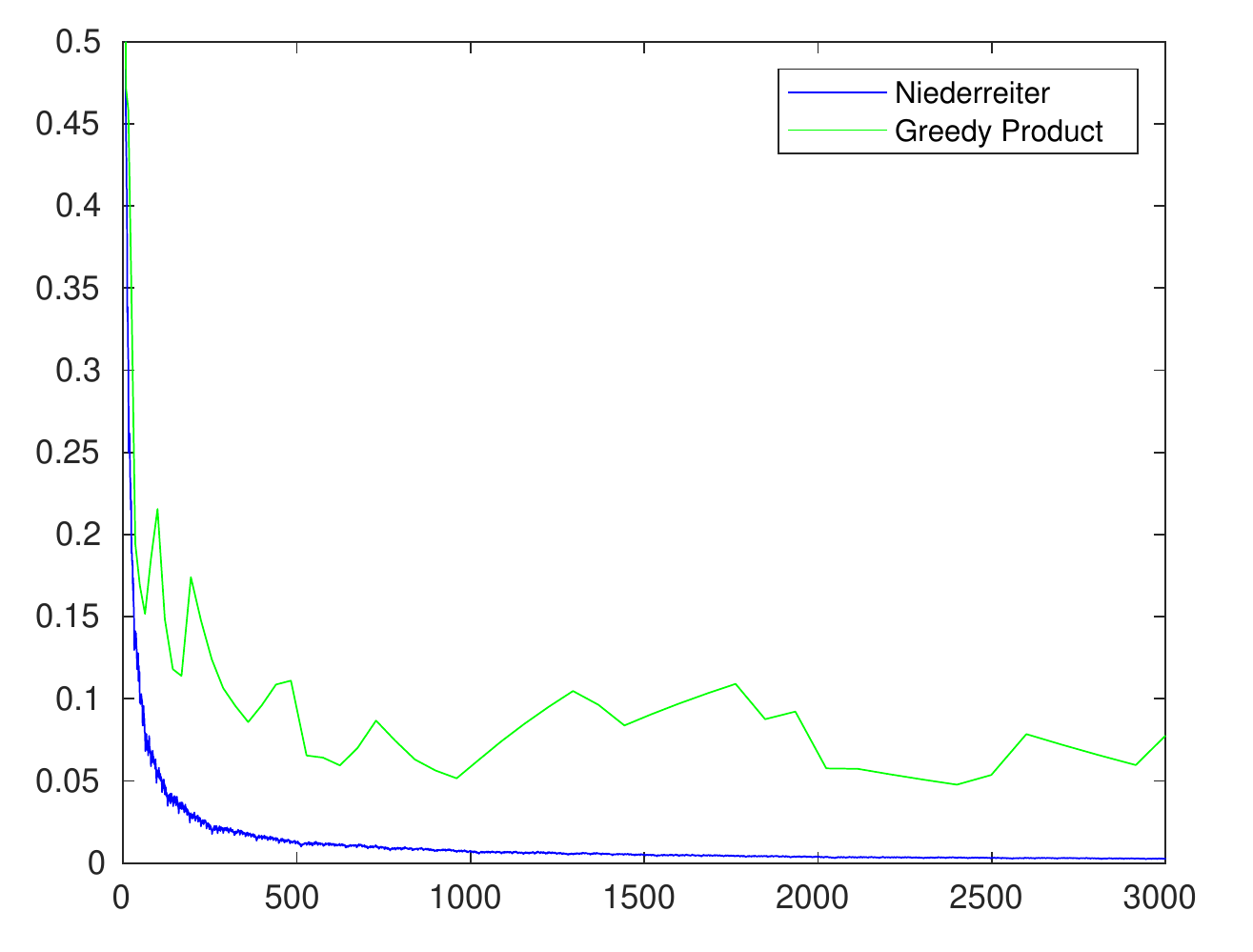} 
			\includegraphics[width=7.7cm,height=5.7cm,angle=0]{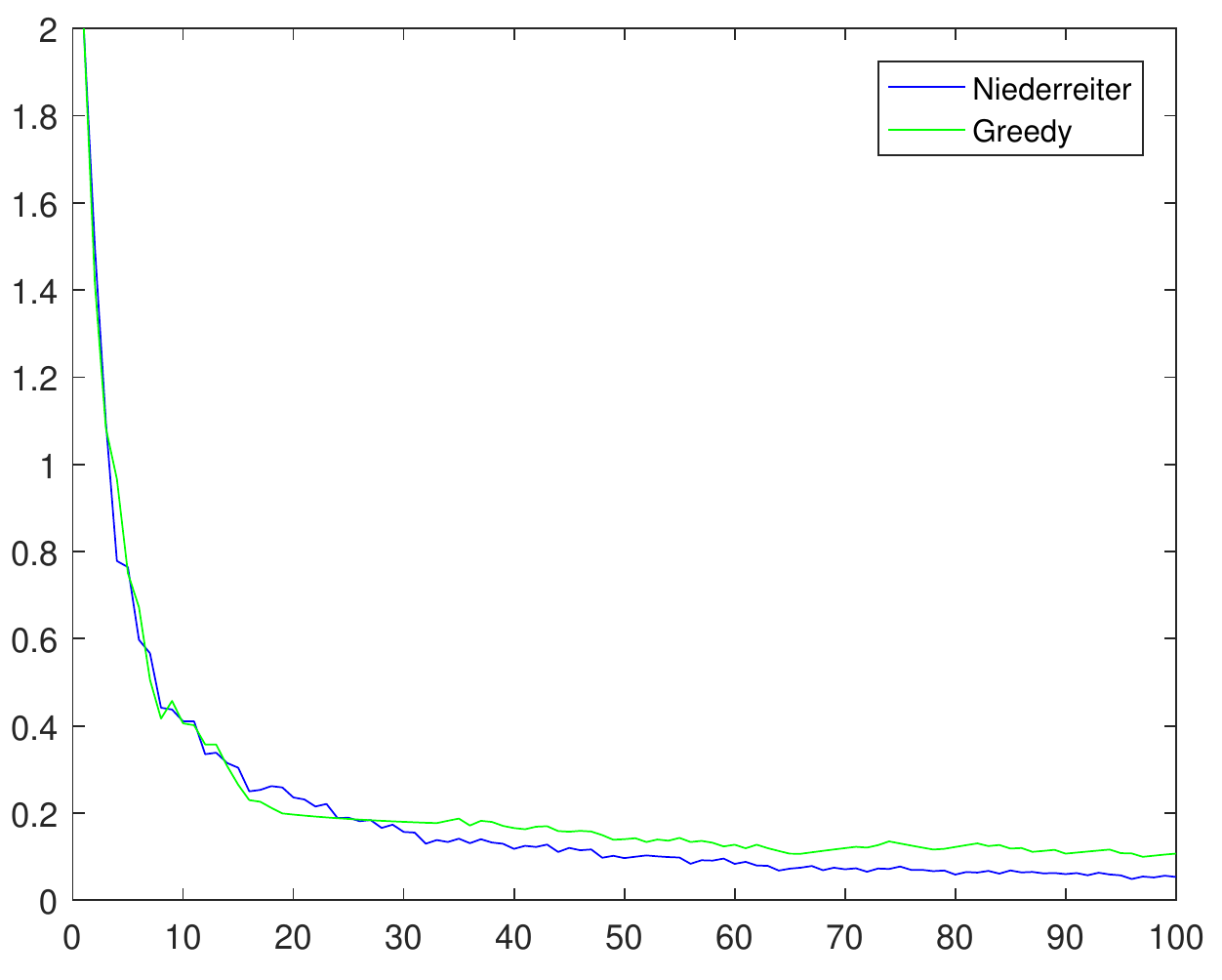}
		\end{tabular}
	\end{center}
	\vspace{-0.5cm}
	\caption{Comparisons of the star discrepancy of the Niederreiter sequence to a greedy product quantization sequence of the uniform distribution $\mathcal{U}([0,1]^2)$ (left) and to a pure greedy quantization sequence (right) for $d=2$.}
	\label{discrepancy}
\end{figure}
The positive results obtained in the one-dimensional case encourage us to try and manipulate low discrepancy sequences, such as Van der Corput sequences, in order to be able to use them as greedy quantization sequences. In other words, we will assign to them a Vorono\"i diagram, compute the weights of the corresponding Vorono\"i cells instead of considering uniform weights and observe the impact the may bring to numerical integration. To this end, we consider a basic example where we compute the price of a European call $$C_0=\E[(X_T-K)_{+}]$$ for a maturity $T$ and a strike price $K$ where the price of the asset $X_t$ at a time $t$ is given by
$$X_t=x_0e^{(r-\frac{\sigma^2}{2})t +\sigma \sqrt{t} Z_t}$$
where $r$ is the interest rate, $\sigma$ the volatility and $(Z_t)_{0 \leq t \leq T}$ is an i.i.d. sequence of random variables with distribution $\mathcal{N}(0,1)$. We compute the price of this European call via a classical quadrature formula using the new weights $p_i^n$ assigned to the VdC sequence instead of uniform weights. We consider 
$$T=1 , \, K=9, \, x_0=10, \, \mu=0.06, \, \sigma =0.1.$$
The exact price is approximately equal to $1.5429$ due to the closed formula known in the Black-Scholes case. In figure~\ref{intnumVdC}, we compute the error induced by this approximation and we compare it to the one obtained by a classical quasi-Monte Carlo method (i.e. where we use the uniform weights of a VdC sequence) and to the one obtained by a quantization-based numerical integration quadrature formula using a greedy quantization sequence of the $\mathcal{U}([0,1])$-distribution. We observe that the procedure using the greedy quantization sequence converges faster than the ones using the Van der Corput sequence. Consequently, one can say that greedy sequences are more advantageous than low discrepancy sequences, even if we assign to them non-uniform weights.  \\

\begin{figure}
	\begin{center}
	\vspace{-0.5cm}
		\includegraphics[width=11cm,height=7cm,angle=0]{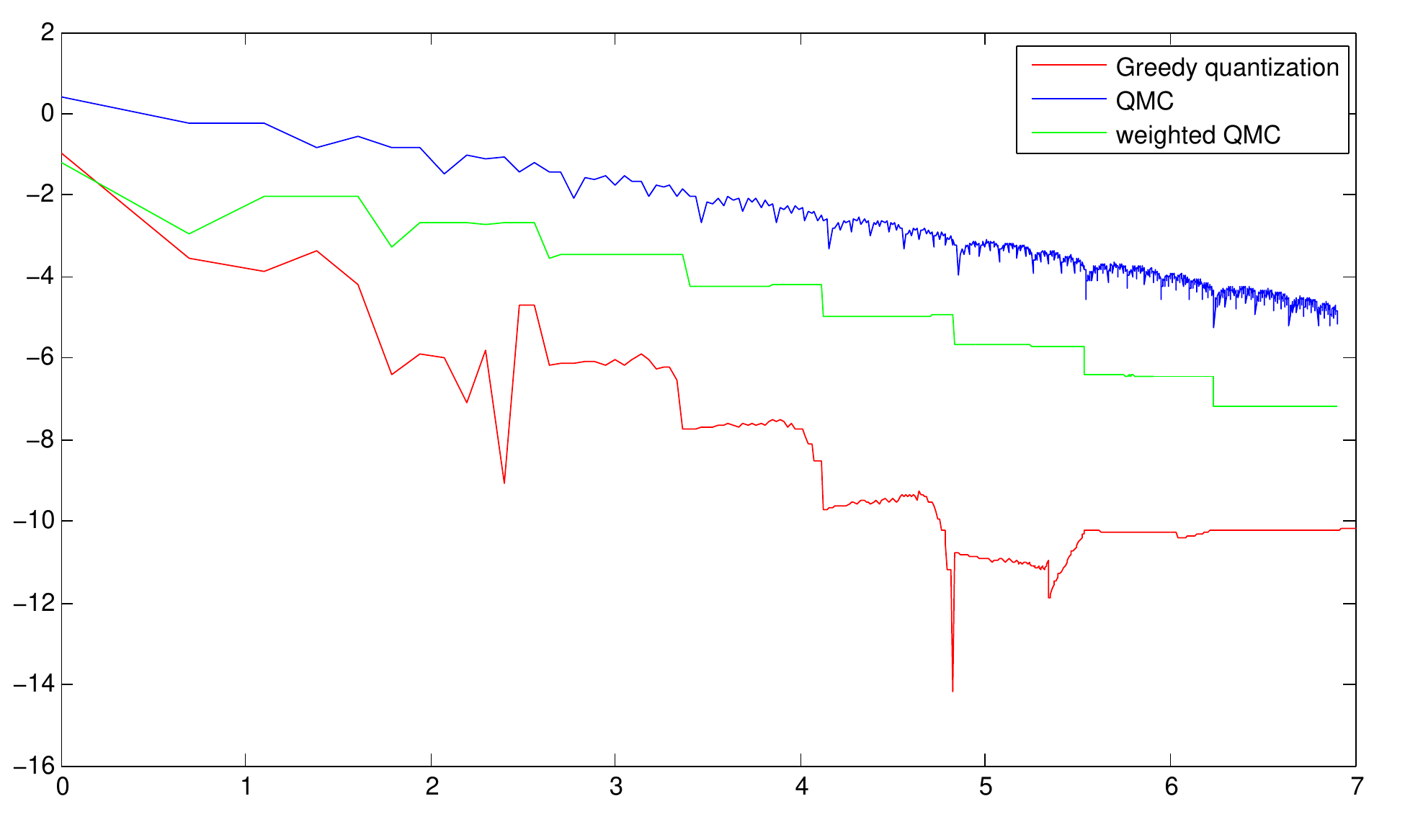}
	\end{center} 
	\vspace{-1cm}
	\caption{Price of a European call in a Black-Scholes model via a usual QMC method (blue), greedy quantization-based quadrature formula (red) and quadrature formula using VdC sequence with non-uniform weights (logarithmic scale).}
	\label{intnumVdC}
\end{figure}
\smallskip
\small
\noindent {\em Acknowledgments.} The authors would like to express a sincere gratitude to, Dr. Rami El Haddad, the co-advisor of R. El Nmeir, for his help and advice during this work. Also, they would like to acknowledge the National Council for Scientific Research of Lebanon (CNRS-L) for granting a doctoral fellowship to Rancy El Nmeir, in a joint program with Agence Universitaire de la Francophonie of the Middle East and the research council of Saint-Joseph University of Beirut.
\small 

\end{document}